\newtheorem{theorem}{Theorem}
\newtheorem{lemma}[theorem]{Lemma}
\newtheorem{corollary}[theorem]{Corollary}
\newtheorem{proposition}[theorem]{Proposition}
\theoremstyle{definition}
\numberwithin{equation}{section}\numberwithin{theorem}{section}
\newcounter{stepctr}
{\end{list}}
\def\XXint#1#2#3{{\setbox0=\hbox{$#1{#2#3}{\int}$}
 \vcenter{\hbox{$#2#3$}}\kern-.5\wd0}}
\newcommand{\circo}{\accentset{\circ}}
\newcommand{\ra}{\rangle}
\newcommand{\la}{\langle}
\newcommand{\e}{\varepsilon}
\providecommand{\titlemacro}{{Mean Curvature Flow in the Sphere}}
\title{\titlemacro}
\author{Artemis A. Vogiatzi}
\date{}
\begin{document}
\title{Sharp Quartic Pinching for the Mean Curvature Flow in the Sphere}
\maketitle
%\tableofcontents
\begin{abstract}
We prove a sharp quartic curvature pinching for the mean curvature flow in $\mathbb{S}^{n+m}$, $m\ge2$, which generalises Pu's work on the convergence of submanifolds in $\mathbb{S}^{n+m}$ to a round point. Using a blow up argument, we prove a codimension and a cylindrical estimate, where in regions of high curvature, the submanifold becomes approximately codimension one, quantitatively, and is weakly convex and moves by translation or is a self shrinker. With a decay estimate, the rescaling converges smoothly to a totally geodesic limit in infinite time, without using Stampacchia iteration or integral analysis.
\end{abstract}
\section{Introduction}\label{sec_introduction}
Let $F_0\colon\mathcal{M}^n \rightarrow \mathcal{N}^{n+m}$, $m\ge2$ be a smooth immersion of a compact manifold $\mathcal{M}^n$. The mean curvature flow starting from $F_0$ is the following family of submanifolds $F\colon \mathcal{M}^n \times[0, T) \rightarrow \mathcal{N}^{n+m},$ such that
	\begin{align*}%\label{mean curvature flow}
	\begin{split}
\left\{
	\begin{array}{rl}
		\partial_t F(p, t) &=H(p, t), \ \ \text{for} \ \ p \in \mathcal{M}, t \in[0, T)   \\
		F(p, 0) &=F_0(p),
	\end{array}
	\right.
\end{split}
	\end{align*}
where $H(p, t)$ denotes the mean curvature vector of $\mathcal{M}_t=F_t(p)=F(p, t)$ at $p$. This is a system of quasilinear weakly parabolic partial differential equations for $F$. Geometrically, the mean curvature flow is the steepest descent flow for the area functional of a submanifold and hence it is a natural curvature flow.
\\
In the case of codimension one, a crucial step in the study of singularity formation in the mean convex mean curvature flow is the convexity estimate. This states that in regions of large mean curvature, the second fundamental form is almost positive definite.
\\
In \cite{Hu84}, Huisken proved that closed convex hypersurfaces under the mean curvature flow evolve into spherical singularities, using Stampacchia iteration, the Michael--Simons--Sobolev inequality together with recursion formulae for symmetric polynomials. In \cite{Hu86}, Huisken generalises this theorem to Riemannian background curvature spaces with strict convexity depending on the background curvature. 
\\
In contrast, White \cite{Wh03,white2005local} uses compactness theorems of geometric measure theory together with the rigidity of strong maximum principle for the second fundamental form. Haslhofer-Kleiner \cite{HK2} developed an alternative approach to White's results based on Andrews's non-collapsing \cite{An12} result for the mean curvature flow.
\\
%The case of mean curvature flow of mean convex hypersurfaces in Euclidean space has been investigated by White \cite{Wh03} and Huisken-Sinestrari \cite{HuSi09}, who have developed a deep and far reaching analysis of the formation of singularities. Recently there has been a number of works generalising these results to high codimension mean curvature flow \cite{LyNgConvexity}, \cite{Naff}, \cite{HTNsurgery}. \\
%Most of the work done on mean curvature flow in higher codimension uses assumptions on the image of the Gauss map. They have either considered graphical submanifolds, \cite{Chen2002},\cite{Li2003}, \cite{Wang2002},\cite{Wang2004}, submanifolds with additional symplectic or Lagrangian structure \cite{Smoczyk2002},\cite{Chen2001},\cite{Wang2001},\cite{Smoczyk2004}, \cite{Neves2007} or using convex subsets of the Grassmannian are preserved by the mean curvature flow, \cite{Tsui2004},\cite{Wang2003},\cite{Wang2005}. Therefore, we will focus on conditions on the norm of the second fundamental form. 
The analysis of high codimension mean curvature flow is far more challenging, than in the hypersurface case, since convexity does not make sense anymore. An alternative condition to convexity was introduced by Andrews--Baker in \cite{AnBa10}:  on a compact submanifold, if $|H|>0$, there exists a $c>0$, such that
	\begin{align*}%\label{this}
	|A|^2\le c|H|^2,
	\end{align*}
which is preserved by codimension one mean curvature flow. Also, this condition makes sense for all codimensions. For $c = \min\{\frac{4}{3n},\frac{1}{n-1}\}$, they were able to prove convergence to a round sphere. Note that $|A|^2 - \frac{1}{n-1}|H|^2 < 0, H >0$ implies convexity in codimension one. 
\\
%This lead Andrews--Baker to consider the pinching condition:
%	\begin{equation}\label{pinchingcondition}
%	|A|^2-c_n|H|^2+d_n\le 0,
%	\end{equation}
%which, is preserved by mean curvature flow, for $c_n\le\frac{4}{3n}$ and $d_n>0$.
In \cite{Hu87}, Huisken proved a convergence theorem for the mean curvature flow of quadratically pinched hypersurfaces in the sphere $\mathbb{S}^{n+1}$. In \cite{BakerNguyen}, Baker and Nguyen generalised that result and proved a sharp convergence theorem for high codimension mean curvature flow as follows.
\begin{theorem} Let $\mathcal{M}_0=F_0\left(\mathcal{M}^n\right)$ be a closed submanifold smoothly immersed in $\mathbb{S}^{n+m}\left(\frac{1}{\sqrt{\bar{K}}}\right)$, with constant curvature $\bar{K}$. If $\mathcal{M}_0$ satisfies
	\begin{align*}
\left\{\begin{array}{l}
|A|^2 \leq \frac{4}{3 n}|H|^2+\frac{n}{2} \bar{K}, \quad n=2,3 \\
|A|^2 \leq \frac{1}{n-1}|H|^2+2 \bar{K}, \quad n \geq 4,
\end{array}\right.
	\end{align*}
then either the mean curvature flow has a unique, smooth solution on a finite time interval and the submanifold $\mathcal{M}_t$ contracts to a point, as $t\to T$ or it exists for all time and the submanifold $\mathcal{M}_t$ converges to a totally geodesic submanifold of $\mathbb{S}^{n+m}\left(\frac{1}{\sqrt{\bar{K}}}\right)$, as $t\to\infty$.
\end{theorem}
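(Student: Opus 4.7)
The plan is to adapt the Andrews--Baker scheme for the Euclidean mean curvature flow in codimension at least two to the spherical background, first by showing that the pinching is preserved under the flow, second by improving it to a pointwise gradient estimate on the traceless second fundamental form, and finally by splitting into the finite and infinite time cases.

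First I would derive the evolution equations for $|H|^2$, $|A|^2$, and $|\mathring A|^2$, carefully tracking the $\bar K$-terms that arise from the ambient curvature of $\mathbb S^{n+m}(1/\sqrt{\bar K})$. Setting $f_c := |A|^2 - c|H|^2 - d(n)\bar K$ for $c = \frac{4}{3n}$ (if $n=2,3$) or $c = \frac{1}{n-1}$ (if $n\ge 4$) and a suitable $d(n)$, the aim is to establish an inequality of the form $\partial_t f_c \le \Delta f_c + \langle \text{good gradient terms}\rangle + \text{lower order}$. The algebraic heart here is the decomposition of the reaction terms $R_1 = 2\sum_{\alpha,\beta}|A_\alpha \cdot A_\beta|^2 + 2|\mathrm{Rm}^\perp|^2$ and $R_2 = 2\sum_{\alpha,\beta}(\mathrm{tr}\, A_\alpha A_\beta)^2$ into expressions controlled by $|\mathring A|^2$, $|H|^2$, and $|A|^2|H|^2$ via the Andrews--Baker inequalities, in which the ambient contributions $\bar K|A|^2$ and $\bar K|H|^2$ are balanced by the additive constant $d(n)\bar K$ in the pinching. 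The maximum principle then yields preservation of the pinching along the flow.

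The main step, and the principal obstacle, is the pinching improvement: in regions of large $|H|$ the ratio $|\mathring A|^2/|H|^2$ must become arbitrarily small. Testing functions like $f_{\sigma,\eta} = (|A|^2 - c|H|^2 - d(n)\bar K)_+^{\sigma} |H|^{\eta-2}$ against powers of themselves and integrating, one absorbs bad gradient terms using Codazzi-type identities and a Kato-type refined inequality, all while carefully tracking the sign of the $\bar K$ contributions. Stampacchia iteration via the Michael--Simon--Sobolev inequality on submanifolds then produces an $L^\infty$ estimate of the shape $|A|^2 - \tfrac{1}{n}|H|^2 \le \e |H|^2 + C(\e)\bar K$ for arbitrary $\e>0$, and a parabolic Moser step upgrades this to a gradient estimate $|\nabla A|^2 \le \e|H|^4 + C(\e)\bar K^2$.

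The two alternatives then follow cleanly. If $\max_{\mathcal M_t}|H| \to \infty$ in finite time $T$, the pinching improvement forces any blow-up limit of the rescaled flow to satisfy $|A|^2 = \tfrac{1}{n}|H|^2$ with $H \neq 0$, hence to be a shrinking round $n$-sphere; combined with a Myers-type diameter bound and smooth subsequential convergence, this implies that $\mathcal M_t$ contracts to a round point as $t \to T$. Otherwise $|H|$ stays uniformly bounded for all $t$, so the flow exists for all time; applied to the evolution $\partial_t|H|^2 \le \Delta |H|^2 + C|A|^2|H|^2$, the pinching improvement together with a Poincar\'e inequality yields exponential decay of $|A|^2$, and $\mathcal M_t$ converges smoothly and exponentially to a totally geodesic submanifold of $\mathbb S^{n+m}(1/\sqrt{\bar K})$ as $t \to \infty$.
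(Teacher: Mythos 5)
The statement you are proving is Theorem~1.1 of the paper, which is cited as prior work of Baker and Nguyen (\cite{BakerNguyen}) and is \emph{not} proved here; the present paper's contribution is the quartic analogue (Theorem~1.4). Your blueprint --- preservation of the quadratic pinching by the tensor maximum principle, pinching improvement via test functions of the form $f_{\sigma,\eta}$ with Stampacchia iteration and the Michael--Simon--Sobolev inequality, gradient estimate, and then the case split on whether $T<\infty$ --- is faithful to the Huisken/Andrews--Baker scheme on which Baker and Nguyen's argument is modelled. It is worth noting that the present paper makes a point of departing from that scheme: for its own decay estimate (Proposition~6.1) it replaces Stampacchia/integral analysis by a direct maximum principle argument on $|\mathring A|^2/f$, while for the finite-time alternative it uses a compactness/blow-up argument and the codimension and cylindrical estimates rather than a Myers/diameter argument. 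For the cited quadratic theorem, however, your Stampacchia-based outline is the natural one.

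The one genuinely underspecified step is the infinite-time limb. You assert that "the evolution $\partial_t|H|^2\le\Delta|H|^2+C|A|^2|H|^2$, the pinching improvement and a Poincar\'e inequality yield exponential decay of $|A|^2$." This misidentifies both the quantity and the mechanism. The exact evolution of $|H|^2$ in the sphere carries the term $+2n\bar K|H|^2$, which drives \emph{growth}; and the $\varepsilon$-improvement $|\mathring A|^2\le\varepsilon|H|^2+C_\varepsilon\bar K$ does not make $|\mathring A|$ small when $|H|$ stays bounded, because $C_\varepsilon\bar K$ does not vanish. The decisive input, absent in the Euclidean case, is the favourable sign of the ambient curvature in the evolution of the \emph{traceless} tensor,
\begin{align*}
(\partial_t-\Delta)|\mathring A|^2 = -2|\nabla\mathring A|^2 + 2R_1 - \tfrac{2}{n}R_2 - 2n\bar K|\mathring A|^2 .
\end{align*}
It is the term $-2n\bar K|\mathring A|^2$, together with the fact that the preserved pinching controls $R_1-\tfrac{1}{n}R_2$ by $|\mathring A|^2$ times bounded quantities, that makes a Stampacchia iteration (or, in the present paper's spirit, a direct maximum principle argument on a suitable ratio) produce the exponential bound $\sup_{\mathcal M_t}|\mathring A|^2\le Ce^{-\delta t}$. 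One then argues that a totally umbilic compact submanifold of the sphere with $H\not\equiv 0$ shrinks to a point in finite time, so long-time existence together with $|\mathring A|\to 0$ forces $|H|\to 0$, hence $|A|^2=|\mathring A|^2+\tfrac1n|H|^2\to 0$ and the limit is totally geodesic. Without this route through $|\mathring A|^2$ your infinite-time conclusion does not follow from the ingredients you list.
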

If the relationship of $|A|^2$ and $|H|^2$ is nonlinear, these pinching conditions could be improved. In \cite{LeiXu}, Lei and Xu obtained a convergence theorem for the mean curvature flow of arbitrary codimension in spheres as follows.
\begin{theorem} Let $F_0: \mathcal{M}^n \rightarrow \mathbb{S}^{n+m}\left(\frac{1}{\sqrt{\bar{K}}}\right)$ be an $n$-dimensional, smooth, compact submanifold with constant curvature $\bar{K}$, $n\ge6$. Assume $\mathcal{M}_0$ satisfies
	\begin{align*}
|A|^2<\gamma(n,|H|, \bar{K}),
	\end{align*}
then the mean curvature flow with the initial value $F_0$ either converges to a round point in finite time, or converges to a total geodesic sphere of $\mathbb{S}^{n+m}\left(\frac{1}{\sqrt{\bar{K}}}\right)$, as $t \rightarrow \infty$.
Here $\gamma(n,|H|, \bar{K})$ is an explicit positive scalar defined by
	\begin{align*}
\gamma(n,|H|, \bar{K})=\min \left\{\alpha\left(|H|^2\right), \beta\left(|H|^2\right)\right\},
	\end{align*}
where
	\begin{align*}
%\label{LeiXualpha} 
\alpha(x)&=n \bar{K}+\frac{n}{2(n-1)} x-\frac{n-2}{2(n-1)} \sqrt{x^2+4(n-1) \bar{K} x},\\
\beta(x) & =\alpha\left(x_0\right)+\alpha^{\prime}\left(x_0\right)\left(x-x_0\right)+\frac{1}{2} \alpha^{\prime \prime}\left(x_0\right)\left(x-x_0\right)^2,\nonumber \\
x_0 & =\frac{2 n+2}{n-4} \sqrt{n-1}\left(\sqrt{n-1}-\frac{n-4}{2 n+2}\right)^2 \bar{K} \nonumber,
	\end{align*}
\end{theorem}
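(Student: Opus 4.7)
The plan follows the standard three-step architecture for Huisken-type pinching convergence theorems. First I would prove that the nonlinear pinching $\Phi := |A|^2 - \gamma(|H|^2) < 0$ is preserved under the flow. The Simons-type evolution equations in $\mathbb{S}^{n+m}(1/\sqrt{\bar K})$ read
\begin{align*}
(\partial_t - \Delta)|A|^2 &= -2|\nabla A|^2 + 2R_1 + 2n\bar K|A|^2 - 2\bar K|H|^2,\\
(\partial_t - \Delta)|H|^2 &= -2|\nabla H|^2 + 2R_2 + 2n\bar K|H|^2,
\end{align*}
with $R_1, R_2$ the quartic Lawson--Simons reaction terms. Hence
\[
(\partial_t - \Delta)\Phi = -2|\nabla A|^2 + 2\gamma'|\nabla H|^2 + \gamma''\babs{\nabla |H|^2}^2 + \mathcal R,
\]
where $\mathcal R$ collects the pointwise reaction in the curvatures. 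At a first zero of $\Phi$ one has $\nabla|A|^2 = \gamma'\nabla|H|^2$, so I would invoke a Kato-type refinement of $|\nabla A|^2 \ge \tfrac{3}{n+2}|\nabla H|^2$ to absorb the gradient contributions, and use the algebraic identity defining $\alpha$ as the root of a quadratic in $|A|^2$ coming from a pointwise estimate on $R_1 - \alpha' R_2$ to arrange $\mathcal R \le 0$. For $|H|^2$ large, the convexity of $\alpha$ blocks the absorption of the $\gamma''$-term; replacing $\alpha$ by its second-order Taylor polynomial $\beta$ at $x_0$ restores the correct sign, with $x_0$ chosen precisely where the gradient and reaction constraints simultaneously become tight. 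The hypothesis $n \ge 6$ enters here to ensure both the quadratic for $\alpha$ and the Kato absorption admit a nontrivial common range.

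Once preservation is established I would dichotomize on the maximal existence time $T$. If $T < \infty$, standard curvature estimates force $\max_{\mathcal M_t}|A|^2 \to \infty$; parabolic rescaling around a point of maximum curvature (Huisken's procedure) produces a smooth blow-up limit, and since $\bar K$ scales away, the pinching passes to the sharp Euclidean inequality, forcing the limit to satisfy $|A|^2 = \tfrac1n|H|^2$, i.e.\ a shrinking round sphere. A Stampacchia-type iteration improves the pinching along the flow and upgrades this to contraction of $\mathcal M_t$ to a round point, as in Huisken and Baker--Nguyen. If instead $T = \infty$, I would use the preserved pinching in the evolution of $\sup|H|^2$ together with the positive background curvature $\bar K$ to extract a differential inequality of the form $\partial_t \sup|H|^2 \le -\delta \sup |H|^2$ for some $\delta > 0$, obtaining exponential decay of $|H|$; interpolation with Bernstein-type higher derivative estimates then upgrades this to smooth exponential convergence of $\mathcal M_t$ to a totally geodesic sphere of $\mathbb{S}^{n+m}(1/\sqrt{\bar K})$.

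The main obstacle will be the preservation step: because the pinching is sharp, the pointwise inequality bounding $\mathcal R$ and the Kato absorption of the $\gamma''$-term must hold without slack simultaneously, leaving essentially no room in the algebra. The transition between $\alpha$ on $\{x < x_0\}$ and its osculating parabola $\beta$ on $\{x \ge x_0\}$ is precisely what keeps both constraints tight across the full range of $|H|^2$, and checking compatibility at $x = x_0$ (where $\gamma$ is only $C^2$) together with the case analysis in the two subranges is the delicate technical heart of the argument.
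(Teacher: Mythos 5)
This theorem is \emph{not proved in the paper}: it is stated as Theorem 1.2 purely as a cited prior result of Lei and Xu \cite{LeiXu}, to motivate the paper's own result (the quartic pinching theorem). There is therefore no ``paper's own proof'' to compare your sketch against; the most I can do is measure your outline against the general methodology the paper attributes to Lei--Xu and against the evolution equations the paper records.

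With that caveat, a few concrete problems stand out. First, the background curvature terms in your Simons equation are wrong in sign and coefficient: the paper's equation \eqref{eqn_|A|^2} reads $(\partial_t-\Delta)|A|^2=-2|\nabla A|^2+2|\langle A,A\rangle|^2+2|R^\perp|^2+4\bar K|H|^2-2n\bar K|A|^2$, whereas you have $+2n\bar K|A|^2-2\bar K|H|^2$. The sign of the $\bar K$ reaction controls whether the pinching tightens or loosens along the flow, so this is not a cosmetic slip. Second, your description of the role of $\beta$ is backwards. Since $\alpha''>0$ with $\alpha'''<0$, the osculating parabola $\beta$ lies below $\alpha$ for $x<x_0$ and above it for $x>x_0$, so $\gamma=\min\{\alpha,\beta\}$ equals $\beta$ on the \emph{small}-$|H|^2$ range and reverts to $\alpha$ for large $|H|^2$; the modification is needed at moderate mean curvature, not ``for $|H|^2$ large.'' Third, for finite-time singularities the rescaled Euclidean limit of the pinching is $|A|^2\le\frac{1}{n-1}|H|^2$ (the large-$x$ asymptote of $\alpha$), which does not by itself exclude cylinders; the Stampacchia iteration you mention must come \emph{before} the blow-up conclusion so as to improve the pinching ratio toward $\frac1n$, rather than afterwards as an upgrade. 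Your broad three-step architecture (preservation, dichotomy on $T$, decay in infinite time) is indeed the right shape — and, as the introduction notes, Lei--Xu do use Stampacchia iteration, in contrast to the blow-up/decay machinery this paper develops for its own theorem — but the details above would need repair before the sketch could serve as a proof.
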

Lei and Xu's convergence theorems imply that the Ricci curvatures of the initial submanifolds are positive, without impling the positivity of the sectional curvatures. In \cite{DongPu}, Pu proved a convergence theorem that improved the theorem of Baker and Nguyen, using a quartic pinching condition and the condition introduced by Lei and Xu in \cite{LeiXu}. By setting
	\begin{align*}%\label{Pub}
b(x)=(1-\delta)\left(\frac{x}{n-1}+2 \bar{K}\right)+\delta \alpha(x), \ x \geq 0,
	\end{align*}
where $\delta=\frac{\sqrt{12 n+9}-7}{2(n-2)}, n=4,5,6$, he proved the following theorem.
\begin{theorem}
Let $F_0: \mathcal{M} \rightarrow \mathbb{S}^{n+m}\left(\frac{1}{\sqrt{\bar{K}}}\right)$ be an n-dimensional, smooth compact submanifold immersed in the sphere, $n\ge4$, $m\ge2$. If $\mathcal{M}$ satisfies
	\begin{align*}
|A|^2 \leq \begin{cases}b\left(|H|^2\right), & n=4,5,6, \\ \sqrt{\left(\frac{|H|^2}{n-1}+2 \bar{K}\right)^2+(2 n-4) \bar{K}^2}, & n \geq 7 ,\end{cases}
	\end{align*}
then the mean curvature flow with the initial value $F_0$ converges to a round point in finite time or converges to a totally geodesic sphere of $\mathbb{S}^{n+m}\left(\frac{1}{\sqrt{\bar{K}}}\right)$, as $t \rightarrow \infty$. 
\end{theorem}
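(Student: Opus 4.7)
The plan is to adapt the framework of Huisken~\cite{Hu87} and Baker--Nguyen~\cite{BakerNguyen}, replacing their linear pinching function with the nonlinear ones appearing in the statement. First I would write down the evolution equations of $|A|^2$ and $|H|^2$ under the flow in the ambient sphere $\mathbb{S}^{n+m}(1/\sqrt{\bar{K}})$, which contain the Simons type quadratic $R_1+R_2$, where $R_1:=\sum_{\alpha,\beta}\langle A_\alpha,A_\beta\rangle^2$ and $R_2:=\sum_{\alpha,\beta}|[A_\alpha,A_\beta]|^2$, together with the ambient corrections $+2\bar{K}|H|^2$ and $+2n\bar{K}|A|^2-2\bar{K}|H|^2$. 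The core analytic step is a scalar parabolic maximum principle applied to $Q:=|A|^2-f(|H|^2)$, with $f=b$ when $n\in\{4,5,6\}$ and $f(x)=\sqrt{(x/(n-1)+2\bar{K})^2+(2n-4)\bar{K}^2}$ when $n\ge 7$. At a first zero of $Q$ one has to extract the inequality $\partial_t Q-\Delta Q\le 0$; the bad gradient remainder $f''(|H|^2)\bigl|\nabla|H|^2\bigr|^2$ must be absorbed using the Codazzi--Kato bound $|\nabla A|^2\ge \tfrac{3}{n+2}|\nabla H|^2$, balanced against $R_1+R_2$ evaluated on the pinching boundary.

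With preservation of the pinching in hand, the argument splits according to whether the flow becomes singular in finite time. If $T<\infty$, I would rescale by $\max_{\mathcal{M}_t}|A|$, extract a complete limit flow via Hamilton's compactness theorem, observe that the $\bar{K}$-dependent terms in $f$ become negligible after rescaling, and thereby retain the sharp scale invariant bound $|A|^2\le |H|^2/(n-1)$ in the blow-up. The codimension and cylindrical estimates anticipated in the present paper then force the blow-up limit to be a codimension one weakly convex self-shrinker; strict pinching eliminates the cylindrical configurations and leaves a round sphere, so the original flow contracts to a round point. If instead $T=\infty$, the preserved pinching and Hamilton's higher derivative estimates yield uniform $C^\infty$ bounds on $A$, and the ambient positive curvature combined with $|H|^2\le n|A|^2\le n f(|H|^2)$ forces $|H|_{\max}$ to zero via a decay estimate of the type used in the present paper, whence $|A|\to 0$ and smooth subconvergence gives a totally geodesic sphere, bypassing any Stampacchia iteration.

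The main obstacle is the preservation of the pinching in the low dimensional range $n\in\{4,5,6\}$. There $b$ is a nontrivial convex combination of the linear function $x/(n-1)+2\bar{K}$ with the Lei--Xu function $\alpha$ whose second derivative changes sign, so the gradient term $-b''(|H|^2)\bigl|\nabla|H|^2\bigr|^2$ carries an unfavourable sign and cannot simply be discarded. One must exploit the precise value $\delta=(\sqrt{12n+9}-7)/(2(n-2))$, which is the root of a quadratic balancing the negative gradient contribution against the positive Simons combination $R_1+R_2-|A|^4/(n-1)$ on the boundary set $\{|A|^2=b(|H|^2)\}$. Locating and closing this algebraic identity, carried out in~\cite{DongPu}, is the most delicate ingredient, and is precisely the piece that the present paper seeks to streamline via its blow-up based approach.
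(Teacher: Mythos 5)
This statement is Theorem 1.3, Pu's result from \cite{DongPu}. The paper quotes it as background and does not prove it; the paper's own theorem is Theorem 1.4, which uses a different quartic pinching built on $1/(n-2)$ for $n\ge 8$ and, crucially, proves the strictly weaker finite-time conclusion that $\mathcal{M}_t$ ``contracts to a codimension one limiting flow'' rather than to a round point. So there is no ``paper's own proof'' of the statement you are proving, and your sketch is really a sketch of the paper's proof of its different theorem.

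This matters because the blow-up route you describe cannot deliver the round-point conclusion of Pu's theorem. The cylindrical estimate bounds $|A|^2 - |H|^2/(n-1)$ by $\eta|H|^2 + C_\eta$, and after the parabolic rescaling in which $\bar{K}\to 0$, the surviving information on the blow-up limit is only the closed inequality $|A|^2 \le |H|^2/(n-1)$. The shrinking cylinder $\mathbb{S}^{n-1}\times\mathbb{R}$ satisfies this with equality, so it sits on the boundary of the rescaled pinching cone; strictness on the compact initial manifold does not survive the rescaling limit. Your phrase ``strict pinching eliminates the cylindrical configurations'' is precisely where the argument breaks. Pu in fact obtains round point by a Stampacchia iteration on $|\mathring{A}|^2/|H|^{2-\sigma}$ together with the Michael--Simon--Sobolev inequality, yielding $|\mathring{A}|^2 \le C_\sigma|H|^{2-\sigma}$ for some $\sigma>0$; this forces $|\mathring{A}|^2/|H|^2\to 0$ as $|H|\to\infty$ and therefore does exclude the cylinder, whose value of $|\mathring{A}|^2/|H|^2$ is the fixed positive constant $1/(n(n-1))$. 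You have not offered a replacement for this step within the blow-up framework, and this is the missing idea.

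Two smaller corrections. The Lei--Xu function $\alpha$ does not have a second derivative that changes sign: a direct computation gives $\alpha''(x) = 2(n-1)(n-2)\bar{K}^2\,(x^2+4(n-1)\bar{K}x)^{-3/2} > 0$, so $\alpha$, and hence $b$, is convex everywhere. The real issue in the preservation step is that this positivity of $f''$ produces an adverse $+f''(|H|^2)\,|\nabla|H|^2|^2$ term in $(\partial_t-\Delta)(|A|^2-f(|H|^2))$ which must be absorbed by $-2|\nabla A|^2$ via Kato; this requires an inequality of the form $2xf''(x)+f'(x) < \frac{2(n-1)}{n(n+2)}$, the analogue of Lemma~\ref{inequalities}(ii). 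Also, in the infinite-time case, the chain $|H|^2\le n|A|^2\le nf(|H|^2)$ does not by itself force $|H|_{\max}\to 0$; the actual mechanism is the exponential decay of a quotient of the form $|\mathring{A}|^2/(a-|A|^2)$, as in Proposition~\ref{theoreminfinitetime}.
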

The above pinching condition for $n\ge7$ is shown to be sharp. 
%In this paper, using \eqref{LeiXualpha} and \eqref{Pub}, we define
%	\begin{align}
%\label{b(x)}
%&b(x):=(1-\delta)\left(\frac{x}{n-2}+4 \bar{K}\right)+\delta \alpha(x), \ x \geq 0,\\
%\label{alpha(x)}
%&\alpha(x):=n\bar{K}+\frac{n}{4(n-2)}x-\frac{n-4}{4(n-2)}\sqrt{x^2+8(n-2)x\bar{K}},
%	\end{align}
%for $\delta\le\frac{2(\sqrt{66}-8)}{n-4}$, $n\ge5$. 
The purpose of this paper is to obtain an extension of Pu's result, using a suitable quartic pinching condition on the submanifold. More specifically, we prove the following theorem.
\begin{theorem}
Let $F_0: \mathcal{M} \rightarrow \mathbb{S}^{n+m}\left(\frac{1}{\sqrt{\bar{K}}}\right)$ be an n-dimensional, smooth compact submanifold, $n\ge8$, $m\ge2$, with constant curvature $\bar{K}$. If $\mathcal{M}_t$ satisfies
%	\begin{align*}
%|A|^2 \leq \begin{cases}b\left(|H|^2\right), & n=5,6,7, \\ \sqrt{\left(\frac{|H|^2}{n-2}+4 \bar{K}\right)^2+(4 n-16) \bar{K}^2}, & n \geq 8 .\end{cases}
%	\end{align*}
	\begin{align}\label{quarticpinching}
|A|^2 \leq \sqrt{\left(\frac{|H|^2}{n-2}+4 \bar{K}\right)^2+(4 n-16) \bar{K}^2},
	\end{align}
for all $0 \le t < T$, where $T$ is the maximal time of existence, then one of the following holds. If $T< \infty$, then $\mathcal{M}_t$ contracts to a codimension one limiting flow as $t \to T$ or if $T = \infty$, then $\mathcal{M}_t$ converges to a smooth totally geodesic submanifold as $t \to T$.
\end{theorem}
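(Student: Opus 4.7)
The plan is to bypass the Stampacchia iteration used by Huisken, Baker--Nguyen and Pu, and instead run a blow-up / contradiction scheme in the spirit of Andrews--Langford and Nguyen, so that the sharp quartic form of (\ref{quarticpinching}) can be fully exploited. First I would show that (\ref{quarticpinching}) is preserved under the flow. Setting
\[
f(x) := \sqrt{\bigl(\tfrac{x}{n-2}+4\bar{K}\bigr)^2+(4n-16)\bar{K}^2}, \qquad Q := |A|^2 - f(|H|^2),
\]
I would derive the evolution equation for $Q$ from Simons' identity in the sphere, which produces additional curvature terms $\bar K(n|A|^2-|H|^2)$ in $\partial_t |A|^2$ and $\bar K|H|^2$ in $\partial_t |H|^2$ relative to the Euclidean case. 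A parabolic maximum principle then gives that $\{Q\le 0\}$ is preserved, once the gradient terms $|\nabla A|^2 - f'(|H|^2)|\nabla H|^2$ and the quartic Simons reaction terms are bounded using the Lu--Hou--Xu / Li--Li tensor inequalities in codimension $m\ge 2$. The quartic profile $f$ is calibrated so that, for $n\ge 8$, these reaction terms are non-positive on the set $\{Q=0\}$.

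\textbf{Codimension and cylindrical estimates.} The core step is a codimension estimate: for every $\eta>0$ there is $C_\eta$ such that along the flow
\[
|A|^2 - \tfrac{1}{n-2}|H|^2 \le \eta |H|^2 + C_\eta,
\]
together with a quantitative statement that, up to $\eta|H|$ errors, the second fundamental form is supported in a one-dimensional normal subspace. The proof is by contradiction and compactness: if the estimate fails at a sequence of space-time points, a parabolic rescaling produces a smooth ancient solution of mean curvature flow in $\mathbb{R}^{n+m}$ (the ambient curvature $\bar K$ being killed by the rescaling), on which (\ref{quarticpinching}) degenerates to $|A|^2\le |H|^2/(n-2)$; a strong maximum principle argument of Naff type then forces the limit to lie in an $(n+1)$-dimensional affine subspace, contradicting the assumed failure. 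With the codimension reduction in hand, in high-curvature regions the flow is essentially codimension-one and mean convex, and a second blow-up argument combined with the convex cylindrical estimate of Huisken--Sinestrari yields a cylindrical estimate of the form $|A|^2 - |H|^2/(n-1) \le \eta|H|^2 + C_\eta$. Any smooth blow-up at a finite singular time is therefore weakly convex of codimension one, and by the theorems of Huisken and Haslhofer--Kleiner recalled in the introduction must be either a shrinking round sphere, a shrinking generalised cylinder, or a translating soliton.

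\textbf{Dichotomy and long-time behaviour.} If $T<\infty$, the classification of blow-ups from the previous step produces the codimension-one limiting flow asserted. If $T=\infty$, I would derive a pointwise decay estimate for the traceless second fundamental form $|A|^2 - |H|^2/n$, and apply standard parabolic regularity to obtain smooth exponential convergence of $\mathcal{M}_t$ to a totally geodesic submanifold of $\mathbb{S}^{n+m}(1/\sqrt{\bar K})$, again without Stampacchia iteration or integral analysis.

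\textbf{Main obstacle.} The hardest step is the codimension estimate. The quartic bound (\ref{quarticpinching}) is only marginally sharper than the linear bound $|A|^2 \le |H|^2/(n-2)$ at high curvature, and the hypothesis $n\ge 8$ enters precisely through the algebraic inequalities for $\tr(A_\alpha A_\beta)^2$ used to close the contradiction in the blow-up limit. Handling the nonlinearity of $f$ consistently in both the preservation step and the blow-up step, so that the reaction term retains the correct sign throughout the rescaling, is where the real technical work lies.
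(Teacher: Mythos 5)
Your outline follows the paper's overall strategy quite closely---preservation of the quartic pinching by the maximum principle using the Li--Li type tensor inequalities, a blow-up/contradiction argument killing the ambient curvature $\bar K$ to reduce to a Euclidean ancient solution, and a strong-maximum-principle rigidity step \`a la Naff for the codimension reduction, followed by a cylindrical estimate and a decay estimate in the $T=\infty$ case. That is the same skeleton the paper uses. However there are two concrete gaps.

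First, the blow-up scheme requires precompactness of the rescaled flows before you can extract an ancient limit, and this is not free. The paper devotes its Section 4 to proving a pointwise gradient estimate $|\nabla A|^2 \le \gamma_1|A|^4+\gamma_2$ (and a second-derivative analogue) directly from the quartic pinching; these, via Langer--Breuning compactness, are what justify the subsequential $C^\infty$ convergence in both the codimension and cylindrical blow-ups. Your sketch passes over this step entirely. Without some version of these curvature-derivative estimates the ``parabolic rescaling produces a smooth ancient solution'' step does not close.

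Second, the codimension estimate you write down, $|A|^2-\frac{1}{n-2}|H|^2 \le \eta|H|^2+C_\eta$, is an immediate consequence of the pinching hypothesis (since $a(|H|^2)<\frac{|H|^2}{n-2}+2\sqrt n\,\bar K$) and carries no information. The actual statement the paper needs, and proves, is $|A^-|^2 \le \varepsilon f + C_\varepsilon$, where $A^-$ is the component of $A$ orthogonal to the principal normal and $f=a(|H|^2)-|A|^2-\varepsilon\omega$; the blow-up is carried out for the ratio $|A^-|^2/f$, whose evolution equation has to be estimated term-by-term (reaction and gradient parts) in the sphere before one can even set up the contradiction. Your prose does gesture at ``the second fundamental form is supported in a one-dimensional normal subspace,'' which is the right intuition, but the displayed inequality is the wrong one. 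Relatedly, for the cylindrical estimate the paper does not appeal to Huisken--Sinestrari's convexity estimates; it runs a second blow-up on $(|A|^2-\frac{1}{n-1}|H|^2)/|H|^2$, applies the strong maximum principle in the Euclidean limit, and invokes Lawson's local rigidity to restrict the limit to $\mathbb S^n$ or $\mathbb S^{n-1}\times\mathbb R$.
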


Condition \eqref{quarticpinching} is called the quartic pinching condition for $\mathcal{M}$. The quartic pinching condition is sharp, since $|A|^4-\left(\frac{|H|^2}{n-2}+4\right)^2-(4n-16)=\frac{4((n-2)^2-1)}{(n-2)^2}\cdot\frac{s^4}{r^4}$, by considering the submanifold $\mathcal{M}=\mathbb{S}^2(r)\times\mathbb{S}^{n-2}(s)\subset\mathbb{S}^{n+1}(1)\subset\mathbb{S}^{n+m}(1)$, with $r^2+s^2=1$. Also, it satisfies $\sqrt{\left(\frac{|H|^2}{n-2}+4 \bar{K}\right)^2+(4 n-16) \bar{K}^2}>\frac{|H|^2}{n-2}+4\bar{K}$. The latter indicates that the term involving $\bar{K}$
 becomes much more influential when considered in the quartic pinching condition than in the quadratic pinching condition. This shows that $\bar{K}$ has an improved proportional impact in the quartic pinching condition.

For the case of finite time, the statement is proven by a blow up argument using gradient estimates on the second fundamental form and a cylindrical type estimate. For the case of infinite time, the statement is proven directly using the maximum principle, without using Stampacchia iteration or integral analysis that is used in \cite{AnBa10}, \cite{BakerThesis}, \cite{Hu87}, \cite{LeiXu} and \cite{DongPu}.
\\
%In this paper, we study singularity formation in high codimension mean curvature flow in Riemannian manifolds and will consider the following curvature pinching condition of the length of the second fundamental form 
%	\begin{align*}
%	|A|^2-c_n|H|^2 \leq -d_n(K_1,K_2,L)
%	\end{align*}
%for some positive constant $d_n$ depending on the background curvature, where
%	\begin{align*}
%	\begin{split} 
%	-K_1\leq K_{\mathcal N} \leq K_2, \quad |\bar{\nabla} \bar{R} | \leq L,\quad \inj(\mathcal N) \geq \imath_{\mathcal N}.
%	\end{split}
%	\end{align*}
%and 
%	\begin{align*}
%	c_n:=\min \left\{\frac{4}{3 n}, \frac{1}{n-2}\right\}, \quad \text { if } n\geq 5.
%	\end{align*}
%This was shown to be preserved in the paper of \cite{Liu} and represents a natural generalisation of Huisken's condition in \cite{Hu86} to high codimension background Riemannian manifolds.\\
%We prove that in regions of high curvature, the submanifold becomes approximately codimension one, in a quantifiable sense. In particular, we prove a theorem that extends of the main theorem of \cite{Naff} to Riemannian background spaces.\\
Nguyen in \cite{HTNsurgery} developed a surgery construction allowing high codimension mean curvature flow with cylindrical pinching to pass through singularities. This generalised the codimension one result of \cite{HuSi09} (see also \cite{HK2}) to high codimension. A key aspect of this surgery procedure is the codimension estimate presented in \cite{Naff}, which shows that near regions of high curvature, singularities become approximately codimension one. Another crucial component is the  cylindrical estimate, which shows that nears regions of high curvature, the submanifold becomes approximately cylindrical of the form $ \mathbb S^{n-1}\times \mathbb R$.  These estimates are essential for the surgery to work and allow us to control the geometry of the submanifold in regions of high curvature.

This paper is part of a series of papers, aiming to extend Nguyen's result on high codimension mean curvature flow with surgery in Euclidean space to Riemannian manifolds.

%\begin{customthm}{5.1}[Codimension Estimate]
%Suppose $n \geq 8$ and $m\ge 2$. Suppose $M_t=F(M, t) \subset \mathbb{S}^{n+m}$, $t \in[0, T)$, is a smooth family of $n$-dimensional, closed, immersed submanifolds evolving by mean curvature flow, which initially satisfies $|A|^2<a-\varepsilon\omega$. Then $\forall \e>0, \exists H_0 >0$, such that if $f \geq H_0$, then
%	\begin{align*}
%	|A^-|^2 \leq \e f+C_{\e},
%	\end{align*}
%$\forall t \in[0, T)$, where $C_\e=C_{\e}(n, m)$.
%\end{customthm}
%\begin{customthm}{5.1}
%Let $F: \mathcal{M}^n\times[0, T) \rightarrow \mathcal{N}^{n+m}$ be a smooth solution to mean curvature flow \eqref{mean curvature flow} so that
%$F_0(p)=F(p, 0)$ is compact and quadratically pinched.
%Then $\forall \e>0, \exists H_0 >0$, such that if $f \geq H_0$, then
%	\begin{align*}
%	|A^-|^2 \leq \e f+C_{\e}
%	\end{align*}
%$\forall t \in[0, T)$ where $C_\e=C_{\e}(n, m)$.
%\end{customthm}
%Assuming the quadratic pinching condition, we prove singularity models for the pinched flow must always have codimension one, regardless of the original flow's codimension.\\
The outline of the paper is as follows. In section 2, we give all the technical tools needed for our work and set up our notation. In section 3, we give the proof for the preservation of the quartic pinching condition along the mean curvature flow. In section 4, we prove the gradient estimates. In section 5, we prove the codimension estimate, the cylindrical estimate and give a full classification for the singularity models of quartically pinched solutions in high codimension mean curvature flow. Finally, in section 6, we prove  that the submanifold converges smoothly to a totally geodesic limit in infinite time.
\\
\textbf{Acknowledgements.}  
The author would like to thank her PhD supervisor, Dr Huy The Nguyen, for many invaluable discussions and for his guidance in this work.
\section{Preliminaries}\label{sec_Preliminaries}
This section presents the necessary preliminary results and establishes our notation. 
%We derive evolution equations for the length and squared length of the second fundamental form, as well as for the mean curvature vectors, in an arbitrary Riemannian background space of any codimension. Additionally, we provide a proof for a Kato-type inequality we will utilise throughout this paper. 
%Let $F\colon \mathcal M^n\times [0,T)\to\mathcal N^{n+m}$ be an $n$-dimensional smooth, closed and connected submanifold isometrically immersed in a simply connected space form $\mathbb{F}^{n+p}(\bar{K})$ with constant curvature $\bar{K}$.
 We adopt the following convention for indices:
	\begin{align*}
	1\le i,j,k,\ldots\le n, \ 1\le a,b,c,\ldots\le n+m \ \ \text{ and} \ \ 1\le \alpha,\beta,\gamma,\ldots \le m.
	\end{align*}
For any $p \in \mathcal{M}$, denote by $N_p \mathcal{M}$ the normal space of $\mathcal{M}$ at point $p$, which is the orthogonal complement of $T_p \mathcal{M}$ in $F^* T_{F(p)} \mathcal{N}$. 
%Here we identify $T_p \mathcal{M}$ with its image under the map $F_*$. 
We choose a local orthonormal frame $\{e_i\}$ of $T_p \mathcal{M}_t$ and $\{\nu_\alpha\}$ for the normal bundle $N_p \mathcal{M}_t$, where
	\begin{align*}
	\nu_1:=\frac{H}{|H|}
	\end{align*} 
is the principal normal direction. This is well defined, since this is used for section 5, where $|H| \neq 0$. \\
%In subsequent computations, we often work in a local orthonormal frame field $\left\{\nu_\alpha\right\}$ in $\mathcal{N}$ such that $e_i$ 's are tangent to $\mathcal{M}$. 
%Let $\left\{\omega_\alpha\right\}$ be the dual frame field of $\left\{\nu_\alpha\right\}$. The metric $g$ and the volume form $d \mu$ of $\mathcal{M}$ are $g=\sum_i \omega_i \otimes \omega_i$ and $d \mu=\omega_1 \wedge \ldots \wedge \omega_n$.\\
%We denote by $A$ to be the normal vector valued second fundamental form tensor and denote by $H$ the mean curvature vector which is the trace of the second fundamental form given by $H^\alpha=\operatorname{tr}_g A^\alpha=\sum_i A^\alpha_{ii}$.  With the local frame, the second fundamental form can be written as $A=\sum_{i,j,\alpha} h_{ij}^\alpha\omega_i\otimes\omega_j\otimes\nu_\alpha=\sum_{i,j}h_{ij}\omega_i\otimes\omega_j$. The tracefree second fundamental form $\mathring{A}$  is defined by $\mathring{A}=A-\frac{1}{n}Hg$, whose components are given by $\mathring{A}^\alpha_{ij}=A^\alpha_{ij}-\frac{1}{n}H^\alpha g_{ij}$. Obviously, we have $\sum_i \mathring{A}^\alpha_{ii}=0$. In particular, the squared length satisfies $|\mathring{A}|^2=|A|^2-\frac{1}{n}|H|^2$. 
We denote by $A$ to be the normal vector valued second fundamental form tensor and denote by $H$ the mean curvature vector which is the trace of the second fundamental form given by $H^\alpha=\operatorname{tr}_g A^\alpha=\sum_i A^\alpha_{ii}$. The tracefree second fundamental form $\mathring{A}$  is defined by $\mathring{A}=A-\frac{1}{n}Hg$, whose components are given by $\mathring{A}^\alpha_{ij}=A^\alpha_{ij}-\frac{1}{n}H^\alpha g_{ij}$. Obviously, we have $\sum_i \mathring{A}^\alpha_{ii}=0$. In particular, the squared length satisfies 
	\begin{align*}
	|\mathring{A}|^2=|A|^2-\frac{1}{n}|H|^2.
	\end{align*}
We denote by $A^-$ the second fundamental form tensor orthogonal to the principal direction and $h$ to be the tensor valued second fundamental form in the principal direction, that is 
	\begin{align*}
	h_{ij}=\frac{\langle A_{ij},H\rangle}{|H|}.
	\end{align*}
Therefore, we have $A=A^-+h\nu_1$ and $|A|^2=|A^-|^2+|h|^2$. Also, $A^+_{ij}=\langle A_{ij},\nu_1\rangle\nu_1$. We denote $\mathring{h}$ to be the traceless part of the second fundamental form in the principal direction.
%We denote $\mathring{h}$ to be the traceless part of the second fundamental form in the principal direction.\\
%Denote $\bar{\nabla}$ the Levi-Civita connection of the ambient space $\mathbb{F}^{n+m}$. We use the same symbol $\nabla$ to represent the connection of the tangent bundle $T \mathcal{M}$ and the normal bundle $N \mathcal{M}$. The induced connection $\nabla$ on $\mathcal{M}$ is defined by
%	\begin{align*}
%	\nabla_X Y=(\bar{\nabla}_X Y)^{\top},
%	\end{align*}
%for $X, Y$ tangent to $\mathcal{M}$, where ()$^{\top}$ denotes tangential component. Denote $\nabla_{i, j}^2 T=\nabla_i\left(\nabla_j T\right)-\nabla_{\nabla_i e_j} T$ the second order covariant derivative of tensors. Then, the Laplacian of a tensor is defined by $\Delta T=\sum_i \nabla_{i, i}^2 T$. Let $R$ be the Riemannian curvature tensor of $\mathcal{M}$. Given a normal vector field $\xi$ along $\mathcal{M}$, the induced connection $\nabla^{\perp}$ on the normal bundle is defined by
%	\begin{align*}
%	\nabla_X^{\perp} \xi=\left(\bar{\nabla}_X \xi\right)^{\perp}
%	\end{align*}
%where ()$^{\perp}$ denotes the normal component. 
Let $R^{\perp}$ denote the normal curvature tensor. 
%The second fundamental form is defined to be
%	\begin{align*}
%	A(X, Y)=(\bar{\nabla}_X Y)^{\perp}
%	\end{align*}
%as a section of the tensor bundle $T^* \mathcal{M} \otimes T^* \mathcal{M}\otimes N \mathcal{M}$, where $T^* \mathcal{M}$ and $N \mathcal{M}$ are the cotangential bundle and the normal bundle on $\mathcal{M}$. 
Given a connection $\nabla$ on $A$, from the definition of $A^-$, it is natural to define the connection $\hat{\nabla}^\bot$ acting on $A^-$, by
	\begin{align*}
	\hat{\nabla}^\bot_i A^-_{jk}:=\nabla^\bot_i A^-_{jk}-\langle \nabla^\bot_i A^-_{jk},\nu_1\rangle\nu_1.
	\end{align*}
The traceless second fundamental form can be rewritten as $\mathring{A}=\sum_{\alpha} \mathring{A}^\alpha \nu_{\alpha}$, where
	\begin{align*}
	\begin{cases} H^+=\operatorname{tr} A^+=|H|,\ \ \ \ \alpha= 1 &\\ H^\alpha=\operatorname{tr} A^\alpha=0, \ \ \ \ \ \ \ \alpha \geq 2\end{cases}
\ \ \text{and} \ \ \ \
	\begin{cases}\mathring{A}^+=A^+-\frac{|H|}{n} Id, \ \ \ \ \alpha= 1 &\\ \mathring{A}^-=A^\alpha,\ \ \ \ \ \ \ \ \ \ \ \ \ \ \ \alpha \geq 2.\end{cases}
	\end{align*}
When using a basis of this kind, we adopt the following notation:
	\begin{align*}
	\left|h\right|^2:=|A^+|^2, \quad|\mathring{h}|^2:=|\mathring{A}^+|^2, \quad |A^-|^2=|\mathring{A}^-|^2:=\sum_{\alpha\ge2}|\mathring{A}^{\alpha}|^2.
	\end{align*}
%With respect to this frame, the second fundamental form can be written
%	\begin{align*}
%	A=\sum_\alpha A^\alpha \otimes \nu_\alpha=\sum_\alpha \sum_{i j} A_{i j}^\alpha \nu_\alpha,
%	\end{align*}where the $A^\alpha=\left(A_{i j}^\alpha\right)$ are symmetric 2-tensors. 
%The trace of the second fundamental form with respect to the metric $g$ is the mean curvature vector $H$ :
%	\begin{align*}
%	H=\sum_\alpha \operatorname{tr} A^\alpha \nu_\alpha=\sum_\alpha \sum_{i j} g^{i j} A_{i j}^\alpha \nu_\alpha.
%	\end{align*}
Let $R_{ijkl}=g\left(R(e_i,e_j)e_k,e_l\right), \bar{R}_{abcd}=\langle\bar{R}(e_a,e_b)e_c,e_d\rangle$ and $R_{ij\alpha\beta}^\bot =\langle R^\bot(e_i,e_j)\nu_\alpha,\nu_\beta\rangle.$\\
%If $\mathcal{M}$ is a hypersurface, then the mean curvature vector is a multiple of the unit normal vector $\nu$ and satisfies
%	\begin{align*}
%H=-\left(\lambda_1+\cdots+\lambda_n\right) \nu,
%	\end{align*}
%where $\lambda_1 \leq \ldots \leq \lambda_n$ are the principal curvatures. In addition, we have $|A|^2=\lambda_1^2+\cdots+\lambda_n^2$ and
%	\begin{align*}
%|\mathring{A}|^2=|A|^2-\frac{1}{n}|H|^2=\frac{1}{n} \sum_{i<j}\left(\lambda_i-\lambda_j\right)^2,
%	\end{align*}
%so that smallness of $|\mathring{A}|^2$ implies that the curvatures are close to each other.
%The mean curvature vector $H \in \Gamma(\mathcal{M})$ can be written in the various forms
%	\begin{align*}
%H=\operatorname{tr}_g h=g^{i j} h_{i j}=h_i{ }^i=h_{i i}=g^{i j} h_{i j}{ }^\alpha \nu_\alpha=h_{i i \alpha} \nu_\alpha .
%	\end{align*}
%Similarly, we write $|h|^2=g^{i k} g^{j l} g_{\alpha \beta}^{\mathcal{N}} h_{i j}{ }^\alpha h_{k l}{ }^\beta=h_{i j \alpha} h_{i j \alpha}$. 
In higher codimension, the Ricci equation can be written as follows.
	\begin{align}\label{Riccieq}
R^\bot_{ij\alpha\beta}=\bar{R}_{ij\alpha\beta}+\sum_{k}\left(A^\alpha_{ip}A^\beta_{jp}-A^\beta_{ip}A^\alpha_{jp}\right).
	\end{align}
%\begin{enumerate}
%\item  Gauss equation:
%	\begin{align}\label{Gausseq}
%	R_{ijpq}=\bar{R}_{ijpq}+\sum_{\alpha}\left(A^\alpha_{ip}A^\alpha_{jq}-A^\alpha_{jp}A^\alpha_{iq}\right).
%	\end{align}
%\item Codazzi equation:
%	\begin{align}\label{Codazzieq}
%(\nabla^\bot_i A)^\alpha_{jp}-(\nabla^\bot_j A)^\alpha_{ip}=-\sum_{\alpha}\bar{R}_{ijp\alpha}.
%	\end{align}
%\item Ricci equation:
%	\begin{align}\label{Riccieq}
%R^\bot_{ij\alpha\beta}=\bar{R}_{ij\alpha\beta}+\sum_{k}\left(A^\alpha_{ip}A^\beta_{jp}-A^\beta_{ip}A^\alpha_{jp}\right).
%	\end{align}
%\end{enumerate}
%We denote $\bar{K}$ to be the sectional curvature and with respect to $\{e_i\}$ and $\{\nu_\alpha\}$.
\begin{proposition}[\cite{AnBa10}, Section 3]\label{evoleqsij}
With the summation convention, the evolution equations of $A_{ij}$ and $H$ are
	\begin{align}\label{eqn_A}
	&(\partial_t - \Delta) A_{ij}=\sum_{p,q,\beta} A_{ij}^{\beta}A_{pq}^{\beta} A_{pq} +\sum_{p,q,\beta}A_{iq}^{\beta}A_{qp}^{\beta} A_{pj}+\sum_{p,q,\beta}A_{jq}^{\beta}A_{qp}^{\beta} A_{pi} - 2 \sum_{p,q,\beta}A_{ip}^{\beta}A_{jq}^{\beta}A_{pq}\nonumber\\
&\quad\quad\quad\quad\quad+ 2 \bar K H g_{ij} - n \bar K A_{ij},\\
\label{eqn_H}		
&\left(\partial_t-\Delta\right) H=\sum_{p,q,\beta} H^{\beta}A_{pq}^{\beta} A_{pq} + n\bar K H.
	\end{align}
\end{proposition}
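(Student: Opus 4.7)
The plan is to prove \eqref{eqn_A} by separately computing $\partial_t A_{ij}$ and $\Delta A_{ij}$ and then subtracting; equation \eqref{eqn_H} will then follow by tracing \eqref{eqn_A} with $g^{ij}$. The derivation is standard and simplifies in the sphere because the ambient is a space form, so $\bar\nabla\bar R = 0$ and $\bar R_{abcd} = \bar K(\bar g_{ac}\bar g_{bd} - \bar g_{ad}\bar g_{bc})$.

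First I would record the variation formulas induced by $\partial_t F = H$. From $g_{ij} = \langle \partial_i F, \partial_j F\rangle$ a direct computation gives $\partial_t g_{ij} = -2\langle H, A_{ij}\rangle$ (and dually $\partial_t g^{ij} = 2 g^{ik}g^{jl}\langle H, A_{kl}\rangle$), while the normal frame satisfies $\langle \partial_t \nu_\alpha, \partial_i F\rangle = -\langle \bar\nabla_i H, \nu_\alpha\rangle$. Using the Gauss--Weingarten representation $A_{ij}^\alpha = \langle \bar\nabla_i \partial_j F, \nu_\alpha\rangle$, differentiating in $t$, commuting $\partial_t$ past $\bar\nabla_i$ through the ambient curvature tensor, and projecting onto the normal bundle produces
\[
\partial_t A_{ij}^\alpha \;=\; \nabla_i \nabla_j^\perp H^\alpha \;+\; (\text{cubic in } A) \;+\; \bar K\text{-terms},
\]
where the cubic contribution arises from the variation of $\partial_j F$ via $\bar\nabla H$.

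Next I would invoke the higher-codimension Simons identity. The Codazzi equation in a constant-curvature ambient reduces to $\nabla_k A_{ij}^\alpha = \nabla_i A_{jk}^\alpha$. Commuting two covariant derivatives using the Gauss equation
\[
R_{ijkl} = \bar K(g_{ik}g_{jl} - g_{il}g_{jk}) + \langle A_{ik}, A_{jl}\rangle - \langle A_{il}, A_{jk}\rangle
\]
and the Ricci equation \eqref{Riccieq} (to handle commutators involving $\nabla^\perp$) yields
\[
\Delta A_{ij}^\alpha \;=\; \nabla_i \nabla_j^\perp H^\alpha \;+\; (\text{cubic in } A) \;+\; \bar K\text{-terms}.
\]
Subtracting, the $\nabla_i \nabla_j^\perp H^\alpha$ terms cancel, the cubic contractions in the normal index $\beta$ reorganise into exactly the four tensors displayed in \eqref{eqn_A}, and the constant-curvature contributions combine to $2\bar K H g_{ij} - n\bar K A_{ij}$.

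To obtain \eqref{eqn_H}, trace \eqref{eqn_A} with $g^{ij}$: the first cubic term becomes $H^\beta A_{pq}^\beta A_{pq}$; the second and third are equal after setting $i=j$ and together contribute $2\sum_{i,p,q,\beta} A_{iq}^\beta A_{qp}^\beta A_{pi}$, which, using symmetry of $A^\beta$ and relabelling, coincides with $2\sum_{i,p,q,\beta} A_{ip}^\beta A_{iq}^\beta A_{pq}$ and so cancels the fourth term; the ambient terms collapse via $g^{ij}g_{ij}=n$ to $2n\bar K H - n\bar K H = n\bar K H$. The main obstacle is the bookkeeping in the Simons computation: commutators of $\nabla$ and $\nabla^\perp$ produce three families of curvature terms (from $R$, $R^\perp$, and $\bar R$), and verifying that after applying Gauss, Ricci, and constancy of $\bar K$ the cubic terms arrange themselves into the precise symmetric form of \eqref{eqn_A} requires careful index juggling; everything else is routine.
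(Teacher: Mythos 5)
Your proposal is correct and follows essentially the same route as the source the paper cites for this result (Andrews--Baker, Section 3, together with Baker's thesis for the spherical ambient): the variation formulas for $g_{ij}$ and $A_{ij}$ under $\partial_t F = H$, the Simons-type identity for $\Delta A_{ij}$ in a space form, cancellation of the $\nabla_i^\perp\nabla_j^\perp H$ terms, and tracing with $g^{ij}$ to obtain \eqref{eqn_H}. Your verification that the second and third cubic terms cancel the fourth under the trace, and that the ambient terms collapse to $n\bar K H$, is accurate.
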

\begin{lemma}[\cite{BakerThesis}, Section 5.1]\label{evoleqs}
Let us consider a family of immersions $F\colon \mathcal{M}^n\times [0,T) \to \mathbb{S}^{n+m}$ moving by mean curvature flow. Then, we have the following evolution equations
	\begin{align}
	%&\partial_t d\mu_t=-|H|^2d\mu_t,\nonumber\\
&(\partial_t -\Delta) |A|^2= - 2 |\nabla A|^2 + 2 | \langle A, A\rangle |^2 + 2 |R^{\perp}|^2 + 4 \bar K |H|^2 - 2 n \bar K |A|^2, \label{eqn_|A|^2}\\
&(\partial_t - \Delta) |H|^2= - 2 |\nabla H|^2 + 2 |\langle A, H\rangle |^2 + 2 n \bar K |H|^2. \label{eqn_|H|^2}
	\end{align}
\end{lemma}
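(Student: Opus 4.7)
The plan is to derive both evolution equations by differentiating the squared norms and invoking Proposition \ref{evoleqsij}, then carrying out the standard contraction identities that turn cubic curvature combinations into the invariants $|\langle A,A\rangle|^2$ and $|R^\perp|^2$.

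For $|H|^2$, I would start from the identity
\begin{align*}
(\partial_t-\Delta)|H|^2 = 2\langle H,(\partial_t-\Delta)H\rangle - 2|\nabla H|^2,
\end{align*}
which follows because $\partial_t|H|^2=2\langle H,\partial_t H\rangle$ while $\Delta|H|^2=2\langle H,\Delta H\rangle+2|\nabla H|^2$ (here $\nabla$ is the induced normal connection). Substituting the expression for $(\partial_t-\Delta)H$ from \eqref{eqn_H}, the cubic term becomes $\sum_{p,q,\alpha,\beta} H^\alpha H^\beta A^\alpha_{pq}A^\beta_{pq}$, which is exactly $|\langle A,H\rangle|^2$ with $\langle A,H\rangle_{pq}=\sum_\alpha H^\alpha A^\alpha_{pq}$. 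The remaining $n\bar K H$ term contributes $2n\bar K|H|^2$, giving \eqref{eqn_|H|^2}.

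For $|A|^2$, the same Bochner-type identity gives
\begin{align*}
(\partial_t-\Delta)|A|^2 = 2\sum_{i,j}\langle A_{ij},(\partial_t-\Delta)A_{ij}\rangle - 2|\nabla A|^2.
\end{align*}
I would then plug in the four cubic terms from \eqref{eqn_A}. The first term, contracted with $A_{ij}$, yields $\sum_{\alpha,\beta}\bigl(\sum_{i,j}A^\alpha_{ij}A^\beta_{ij}\bigr)^2=|\langle A,A\rangle|^2$, directly producing the $2|\langle A,A\rangle|^2$ summand. The linear background terms $2\bar K H g_{ij}-n\bar K A_{ij}$ contract with $A_{ij}$ to give $2\bar K|H|^2 - n\bar K|A|^2$, accounting for the last two terms in \eqref{eqn_|A|^2} after the factor of $2$.

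The main obstacle is showing that the three remaining cubic contractions
\begin{align*}
\sum_{i,j,p,q,\alpha,\beta}\! A^\alpha_{ij}\!\left(A^\beta_{iq}A^\beta_{qp}A^\alpha_{pj}+A^\beta_{jq}A^\beta_{qp}A^\alpha_{pi}-2A^\beta_{ip}A^\beta_{jq}A^\alpha_{pq}\right)
\end{align*}
recombine to $2|\langle A,A\rangle|^2+2|R^\perp|^2$ after adding to the already-identified $|\langle A,A\rangle|^2$ contribution. Here I would use the symmetry $A^\alpha_{ij}=A^\alpha_{ji}$ to match indices in the first two terms and then complete the square, invoking the Ricci identity \eqref{Riccieq}: since $\bar R_{ij\alpha\beta}=0$ in the sphere (normal and tangent directions are $\bar g$-orthogonal), $R^\perp_{ij\alpha\beta}=\sum_k\bigl(A^\alpha_{ik}A^\beta_{jk}-A^\beta_{ik}A^\alpha_{jk}\bigr)$, so $|R^\perp|^2$ expands into precisely the combinations appearing above. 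This computation is exactly the sphere-ambient version of the Simons identity carried out in Andrews--Baker \cite{AnBa10} and in Baker's thesis \cite{BakerThesis}; once this algebraic identity is verified, collecting all terms produces \eqref{eqn_|A|^2}.
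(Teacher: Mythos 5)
Your derivation is correct and is essentially the argument of the cited source: the paper gives no proof of Lemma \ref{evoleqs} beyond the reference to \cite{BakerThesis} (see also \cite{AnBa10}, Section 3), and the standard route there is exactly yours --- contract Proposition \ref{evoleqsij} with $H$ and $A_{ij}$, identify the first cubic contraction as $|\langle A,A\rangle|^2$, and recombine the remaining three via the Ricci equation \eqref{Riccieq} with $\bar R_{ij\alpha\beta}=0$ in constant curvature to produce $2|R^{\perp}|^2$. The algebra checks out: the two ``trace'' contractions each give $\sum_{p,q}\bigl(\sum_{i,\alpha}A^{\alpha}_{ip}A^{\alpha}_{iq}\bigr)^2$ and the last gives $\sum_{\alpha,\beta}\tr(A^{\alpha}A^{\beta}A^{\alpha}A^{\beta})$, which is precisely the expansion of $|R^{\perp}|^2$.
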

By Berger's inequality in \cite{Goldberg}, we have
	\begin{align}\label{Berger}
	\begin{split}
&|\bar{R}_{acbc}|\le\frac{1}{2}(K_1+K_2), \ \ \text{ for} \ a\neq b,\\
	&|\bar{R}_{abcd}|\le\frac{2}{3}(K_1+K_2), \ \ \text{for all distinct indices} \ a,b,c,d.
	\end{split}
	\end{align}
\begin{lemma}[\cite{Liu}, Lemma 3.1]\label{katoinequality} For any $\eta>0$ we have the following inequality.
	\begin{align*}%\label{part3.1}
	|\nabla^\perp A|^2 \geq\left(\frac{3}{n+2}-\eta\right)&|\nabla^\perp H|^2-\frac{2}{n+2}\left(\frac{2}{n+2} \eta^{-1}-\frac{n}{n-1}\right)|w|^2,
	\end{align*}
%and
%	\begin{align*}%\label{part3.2}
%	\begin{split}
%	|\nabla^\perp A|^2-\frac{1}{n}|\nabla^\perp H|^2&\geq \frac{n-1}{2 n+1}|\nabla^\perp A|^2-\frac{2 n}{(n-1)(2 n+1)}|w|^2 \\\
%	&\geq \frac{n-1}{2 n+1}|\nabla^\perp A|^2-C(n, d)\left(K_1+K_2\right)^2.
%	\end{split}
%	\end{align*}
where $w=\sum_{i, j, \alpha} \bar{R}_{\alpha j i j} e_i \otimes \omega_\alpha$.
% and $C(n, d)=\frac{n^4 d}{2(n-1)(2 n+1)}$.
	\end{lemma}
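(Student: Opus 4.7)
The plan is to prove this Kato-type inequality by decomposing $\nabla^\perp A$ into its ``trace'' part (in the direction of $\nabla^\perp H$) and a remainder, and then carefully accounting for the failure of total symmetry caused by the ambient curvature via the Codazzi equation. Write $T_{ijk}:=\nabla_i^\perp A_{jk}$. Then $T$ is automatically symmetric in its last two indices, while Codazzi in a general ambient manifold gives $T_{ijk}-T_{jik}=-(\bar R(e_i,e_j)e_k)^\perp$, so the obstruction to total symmetry is precisely a projection of the ambient Riemann tensor.

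Following the classical (space-form) argument, introduce the ansatz
\[
\tilde T_{ijk}=\frac{1}{n+2}\bigl(\delta_{ij}H'_k+\delta_{ik}H'_j+\delta_{jk}H'_i\bigr),\qquad H'_i:=(\nabla^\perp H)_i,
\]
and set $P:=T-\tilde T$. A direct orthonormal-frame computation gives $|\tilde T|^2=\frac{3}{n+2}|\nabla^\perp H|^2$. Using $H=\mathrm{tr}\,A$ and Codazzi to commute derivatives, the two traces of $T$ are
\[
\sum_j T_{ijj}=H'_i,\qquad \sum_i T_{iij}=H'_j-w'_j,
\]
where $w'_j:=\sum_i(\bar R(e_i,e_j)e_i)^\perp$ is bounded pointwise by $|w|$. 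Subtracting the corresponding traces of $\tilde T$ yields $\sum_j P_{ijj}=0$ and $\sum_i P_{iij}=-w'_j$. Expanding $|T|^2=|\tilde T|^2+|P|^2+2\langle P,\tilde T\rangle$ and distributing the $\delta$'s in $\tilde T$, the cross term collapses to trace expressions in $P$, producing $2\langle P,\tilde T\rangle=-\frac{4}{n+2}\langle w',H'\rangle$.

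To finish, I would lower-bound $|P|^2$ by using both trace constraints on $P$ simultaneously. A single Cauchy-Schwarz on $\sum_i P_{iij}=-w'_j$ only gives $|P|^2\geq\frac{1}{n}|w|^2$, but combining it with the vanishing trace $\sum_j P_{ijj}=0$ and the last-two-index symmetry (for instance by solving the quadratic minimisation $\min|P|^2$ under these linear constraints via Lagrange multipliers, or equivalently by passing to the traceless tensor $\nabla^\perp \mathring A$ using the identity $|\nabla^\perp A|^2=|\nabla^\perp\mathring A|^2+\frac{1}{n}|\nabla^\perp H|^2$) sharpens this to $|P|^2\geq \frac{2n}{(n+2)(n-1)}|w|^2$. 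Substituting this in and applying Young's inequality
\[
\frac{4}{n+2}|\langle w',H'\rangle|\leq \eta|\nabla^\perp H|^2+\frac{4}{(n+2)^2\eta}|w|^2
\]
lets one trade a controlled amount of $|\nabla^\perp H|^2$ for $|w|^2$, and collecting terms yields exactly $|\nabla^\perp A|^2\geq (\frac{3}{n+2}-\eta)|\nabla^\perp H|^2-\frac{2}{n+2}\bigl(\frac{2}{n+2}\eta^{-1}-\frac{n}{n-1}\bigr)|w|^2$.

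The main obstacle is the sharp constant $\frac{n}{n-1}$ inside the $|w|^2$ term: obtaining it requires the improved bound $|P|^2\geq\frac{2n}{(n+2)(n-1)}|w|^2$, which in turn forces one to use \emph{both} families of trace constraints on $P$, not just one. The cleanest route is probably via the traceless decomposition, where the algebraic identities are simpler and the factor $\frac{n-1}{n}$ appears organically from the dimension of the traceless symmetric space; if instead one works directly with $T$, the same factor should emerge from a careful refined Cauchy-Schwarz that incorporates the symmetry $P_{ijk}=P_{ikj}$.
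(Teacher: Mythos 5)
The paper does not prove this lemma; it is quoted verbatim from Liu--Xu--Zhao \cite{Liu}, so there is no in-paper argument to compare against. Your reconstruction is correct and is essentially the standard proof from that reference: decompose $\nabla^\perp A$ into a trace-type part plus a remainder $P$, use the Codazzi identity with ambient-curvature error to compute the traces, and absorb the cross term by Young's inequality with parameter $\eta$. The one step you only sketch --- the bound $|P|^2\geq \frac{2n}{(n+2)(n-1)}|w|^2$ --- does come out exactly as you predict: minimising $|P|^2$ over tensors symmetric in the last two indices with $\sum_k P_{ikk}=0$ and $\sum_i P_{iik}=-w_k$ is achieved by the trace-type tensor $P^*_{ijk}=g_{jk}U_i+g_{ij}V_k+g_{ik}V_j$ with $U=\frac{2}{(n-1)(n+2)}\,w$ and $V=-\frac{n}{(n-1)(n+2)}\,w$, whose squared norm $n|U|^2+4\langle U,V\rangle+2(n+1)|V|^2$ equals $\frac{2n}{(n-1)(n+2)}|w|^2$; equivalently, one may project $\nabla^\perp A$ orthogonally onto the two-parameter family of trace-type tensors in one stroke, which produces the same constants without a separate minimisation.
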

%	\begin{proof}
% Inequality \eqref{part3.2} follows from \eqref{part3.1} with $\eta=\frac{n-1}{n(n+2)}$. To prove \eqref{part3.1}, we set
%	\begin{align}\label{E_ijk}
%	\begin{split}
%E_{i j k}&=\frac{1}{n+2}\left(\nabla_i ^\perp H g_{j k}+\nabla_j ^\perp H  g_{i k}+\nabla_k ^\perp H g_{i j}\right)\\\
%	&-\frac{2}{(n+2)(n-1)} w_i g_{j k}+\frac{n}{(n+2)(n-1)}\left(w_j g_{i k}+w_k g_{i j}\right).
%	\end{split}
%	\end{align}
%Let $F_{i j k}=\nabla_i ^\perp h_{j k}-E_{i j k}$. By the Codazzi equation we have $\left\langle E_{i j k}, F_{i j k}\right\rangle=0$. Hence, $|\nabla^\perp A|^2 \geq|E|^2$. By a direct computation, we have
%	\begin{align*}
%	|E|^2=\frac{3}{n+2}|\nabla^\perp H|^2+\frac{2 n}{(n+2)(n-1)}|w|^2+\frac{4}{n+2}\langle\nabla^\perp H, w\rangle.
%	\end{align*}
%But from Cauchy-Schwarz inequality and Young's inequality for products, we have
%	\begin{align*}
%	\frac{4}{n+2}\langle\nabla^\perp H,w\rangle\ge-\eta|\nabla^\perp H|^2-\frac{4}{(n+2)^2}\eta^{-1}|w|^2.
%	\end{align*}
%Plugging the above inequality into \eqref{E_ijk}, we get \eqref{part3.1}.
%\end{proof}
Following Naff in \cite{Naff}, we have the following decomposition. Recalling that $A^-_{j k}$ is traceless, it is straightforward to verify that
	\begin{align}\label{eq4.15}
	&\sum_{i,j,k}|\nabla_i h_{j k}+\langle\nabla_i^{\perp} A^-_{j k}, \nu_1\rangle|^2 =\sum_{i,j,k}|\nabla_i \mathring{h}_{j k}+\langle\nabla_i^{\perp} A^-_{j k}, \nu_1\rangle|^2+\frac{1}{n}|\nabla| H \|^2,\\
	\label{eq4.17}
	&\sum_{i,j,k}|\langle\nabla_i^{\perp} \mathring{A}_{j k}, \nu_1\rangle|^2=\sum_{i,j,k}|\nabla_i \mathring{h}_{j k}+\langle\nabla_i^{\perp} A^-_{j k}, \nu_1\rangle|^2,\\
	\label{eq4.16}
	&\sum_{i,j,k}|\hat{\nabla}_i^{\perp} A^-_{j k}+h_{j k} \nabla_i^{\perp} \nu_1|^2 =\sum_{i,j,k}|\hat{\nabla}_i^{\perp} A^-_{j k}+\mathring{h}_{j k} \nabla_i^{\perp} \nu_1|^2+\frac{1}{n}|H|^2|\nabla^{\perp} \nu_1|^2.
	\end{align}
%Observe that for the first term in \eqref{eq4.15}, we have
%	\begin{align}\label{eq4.17}
%	\sum_{i,j,k}|\langle\nabla_i^{\perp} \mathring{A}_{j k}, \nu_1\rangle|^2=\sum_{i,j,k}|\nabla_i \mathring{h}_{j k}+\langle\nabla_i^{\perp} A^-_{j k}, \nu_1\rangle|^2,
%	\end{align}
%which will be useful later on. 
\begin{proposition}[\cite{Naff}, Proposition 2.2]\label{prop2.2naff}
	\begin{align}\label{2.22naff}
&|\nabla^{\perp} A|^2 =\sum_{i,j,k}|\hat{\nabla}_i^{\perp} A^-_{j k}+h_{j k} \nabla_i^{\perp} \nu_1|^2+\sum_{i,j,k}|\langle\nabla_i^{\perp} A^-_{j k}, \nu_1\rangle+\nabla_i h_{j k}|^2,\\
	\label{2.23naff}	&|\nabla^{\perp} H|^2 =|H|^2|\nabla^{\perp} \nu_1|^2+|\nabla| H||^2,\\
	\label{2.24naff}	&|\nabla^{\perp} A^-|^2 =|\hat{\nabla}^{\perp} A^-|^2+|\langle\nabla^{\perp} A^-, \nu_1\rangle|^2.
	\end{align}
\end{proposition}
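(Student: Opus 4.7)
The plan is to prove all three identities by the same mechanism: the pointwise orthogonal decomposition of the normal bundle into $\mathbb{R}\nu_1 \oplus \nu_1^{\perp}$, combined with the splitting $A = A^- + h\nu_1$, $H = |H|\nu_1$, and the defining formula $\hat{\nabla}_i^{\perp} A^-_{jk} := \nabla_i^{\perp} A^-_{jk} - \langle \nabla_i^{\perp} A^-_{jk}, \nu_1\rangle \nu_1$ from the preliminaries. The key observation used repeatedly is that $|\nu_1|^2 = 1$ forces $\langle \nabla_i^{\perp}\nu_1, \nu_1\rangle = 0$, so $\nabla_i^{\perp}\nu_1 \in \nu_1^{\perp}$.

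I would first dispatch \eqref{2.23naff}. Writing $H = |H|\nu_1$ and applying the product rule for $\nabla^{\perp}$ gives $\nabla_i^{\perp} H = (\nabla_i|H|)\nu_1 + |H|\,\nabla_i^{\perp}\nu_1$. The two summands are orthogonal by the observation above, so Pythagoras gives $|\nabla_i^{\perp} H|^2 = (\nabla_i |H|)^2 + |H|^2 |\nabla_i^{\perp}\nu_1|^2$; summing in $i$ yields \eqref{2.23naff}. Identity \eqref{2.24naff} is similarly immediate: by the very definition of $\hat{\nabla}^{\perp}$, we have the orthogonal splitting $\nabla_i^{\perp} A^-_{jk} = \hat{\nabla}_i^{\perp} A^-_{jk} + \langle \nabla_i^{\perp} A^-_{jk},\nu_1\rangle \nu_1$ with $\hat{\nabla}_i^{\perp} A^-_{jk} \in \nu_1^{\perp}$, and taking squared norms and summing gives \eqref{2.24naff}.

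For \eqref{2.22naff}, the main identity, I would differentiate $A = A^- + h\nu_1$ componentwise to obtain
\begin{equation*}
\nabla_i^{\perp} A_{jk} = \nabla_i^{\perp} A^-_{jk} + (\nabla_i h_{jk})\nu_1 + h_{jk}\,\nabla_i^{\perp}\nu_1,
\end{equation*}
then substitute $\nabla_i^{\perp} A^-_{jk} = \hat{\nabla}_i^{\perp} A^-_{jk} + \langle \nabla_i^{\perp} A^-_{jk},\nu_1\rangle \nu_1$ and group the terms by whether they lie in $\nu_1^{\perp}$ or along $\nu_1$:
\begin{equation*}
\nabla_i^{\perp} A_{jk} = \bigl(\hat{\nabla}_i^{\perp} A^-_{jk} + h_{jk}\,\nabla_i^{\perp}\nu_1\bigr) + \bigl(\langle \nabla_i^{\perp} A^-_{jk}, \nu_1\rangle + \nabla_i h_{jk}\bigr)\nu_1.
\end{equation*}
Because $\hat{\nabla}_i^{\perp} A^-_{jk}$ and $\nabla_i^{\perp}\nu_1$ both lie in $\nu_1^{\perp}$, the two bracketed pieces are orthogonal, and Pythagoras combined with a sum over $i,j,k$ yields \eqref{2.22naff}.

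There is essentially no analytic obstacle here; the content is purely algebraic bookkeeping in an orthogonally split normal bundle, and the entire proposition amounts to three instances of the Pythagorean theorem. The only mild subtlety worth flagging explicitly in the write-up is the identity $\langle \nabla_i^{\perp}\nu_1, \nu_1\rangle = 0$, which is what makes $\nabla_i^{\perp}\nu_1$ behave as a genuine section of $\nu_1^{\perp}$ and legitimizes the orthogonal grouping in the proof of \eqref{2.22naff}; everything else follows automatically from the definitions stated in Section 2.
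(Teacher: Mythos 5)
Your proof is correct and is essentially the same argument as in the cited source: the paper states this proposition without proof (importing it from Naff), and the standard derivation is exactly the orthogonal splitting of the normal bundle along $\nu_1$ together with $\langle\nabla_i^{\perp}\nu_1,\nu_1\rangle=0$ and the Pythagorean theorem, as you have written. No gaps.
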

It is very useful to consider the implications of the Codazzi equation for the decomposition of $\nabla_i^{\perp} A_{j k}$ above. Projecting the Codazzi equation onto $E_1$ and $\hat{E}$ implies the both of the tensors
	\begin{align*}
\nabla_i h_{j k}+\langle\nabla_i^{\perp} A^-_{j k}, \nu_1\rangle \quad \text { and } \quad \hat{\nabla}_i^{\perp} A^-_{j k}+h_{j k} \nabla_i^{\perp} \nu_1
	\end{align*}
are symmetric in $i, j, k$. Consequently, it is equivalent to trace over $j, k$ or trace over $i, k$, and this implies
	\begin{align}\label{2.25naff}
& \sum_{k=1}^n (\nabla_k h_{i k}+\langle\nabla_k^{\perp} A^-_{i k}, \nu_1\rangle)=\nabla_i|H|, \\
	\label{2.26naff}
& \sum_{k=1}^n( \hat{\nabla}_k^{\perp} A^-_{i k}+h_{i k} \nabla_k^{\perp} \nu_1)=|H| \nabla_i^{\perp} \nu_1.
	\end{align}
As in Lemma \ref{katoinequality}, we obtain that
	\begin{align}
%&\frac{3}{n+2}|\nabla| H||^2 \leq\sum_{i,j,k}|\nabla_i h_{j k}+\langle\nabla_i^{\perp} A^-_{j k}, \nu_1\rangle|^2,\nonumber\\	
\label{4.20naff}
&\frac{3}{n+2}|H|^2|\nabla^\bot \nu_1|^2\le\sum_{i,j,k}|\hat{\nabla}^\bot_i A^-_{jk}+h_{jk}\nabla^\bot_i \nu_1|^2.
	\end{align}
Now expanding the right-hand side of both inequalities above using \eqref{eq4.15} and \eqref{eq4.16}, recalling \eqref{eq4.17} and noting that $\frac{3}{n+2}-\frac{1}{n}=\frac{2(n-1)}{n(n+2)}$, we arrive at the estimates
	\begin{align}\label{4.21naff}
	&\frac{2(n-1)}{n(n+2)}|\nabla| H \|^2 \leq\sum_{i,j,k}|\langle\nabla_i^{\perp} \mathring{A}_{j k}, \nu_1\rangle|^2, \\
	\label{4.22naff} 
	&\frac{2(n-1)}{n(n+2)}|H|^2|\nabla^{\perp} \nu_1|^2 \leq\sum_{i,j,k}|\hat{\nabla}_i^{\perp} A^-_{j k}+\mathring{h}_{j k} \nabla_i^{\perp} \nu_1|^2.
	\end{align}
We follow the notation from \cite{HuSi09}. Given $p \in \mathcal{M}$ and $r>0$, we let $\mathcal{B}_{g(t)}(p, r) \subset \mathcal{M}$ be the closed ball of radius $r$ around $p$ with respect to the metric $g(t)$. In addition, if $t, \theta$ are such that $0 \leq t-\theta<t \leq T$, we set
	\begin{align}\label{parneigh}
&\mathcal{P}(p, t, r, \theta)=\left\{(q, s): q \in \mathcal{B}_{g(t)}(p, r), s \in[t-\theta, t]\right\},
%&\hat{\mathcal{P}}(p, t, r, \theta):=\mathcal{P}\left(p, t, H(p, t)^{-1} r, H(p, t)^{-2} \theta\right)\nonumber.
	\end{align}
Such a set will be called a (backward) parabolic neighbourhood of $(p, t)$.
%We have the following evolution equations for the mean curvature flow.
\begin{lemma}[\cite{AnBa10}, \cite{BakerThesis}]
	\begin{align}\label{eqn_|A|^2traceless}
&\left(\partial_t-\Delta\right)|\mathring{A}|^2= -2|\nabla \AA|^2+2 R_1-\frac{2}{n} R_2-2 n \bar{K}|\mathring{A}|^2,
	\end{align}
where
\begin{align*}
	R_1&=\sum_{\alpha,\beta}\left(\sum_{i,j} A^\alpha_{ij} A_{ij}^\beta \right)^2+\sum_{i,j,\alpha,\beta}\left(\sum_p\left(A^\alpha_{ip}A_{jp}^\beta -A^\alpha_{jp}A_{ip}^\beta \right)\right)^2,\\
	R_2&=\sum_{i,j}\left(\sum_{\alpha} H^\alpha A^\alpha_{ij}\right)^2=|h|^2|H|^2=|A|^2|H|^2-P_2|H|^2,\\
	P_2&=\sum_{\alpha>1}\left(\mathring{h}_{i j}^{ \alpha}\right)^2=|\mathring{A}^-|^2=|A^-|^2.
	\end{align*}
	\end{lemma}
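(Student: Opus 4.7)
The strategy is to apply the heat operator to the identity $|\mathring{A}|^2=|A|^2-\tfrac{1}{n}|H|^2$, substitute the two evolution equations \eqref{eqn_|A|^2} and \eqref{eqn_|H|^2} from Lemma~\ref{evoleqs}, and then rearrange the result using the Ricci equation \eqref{Riccieq} together with the defining identities of $R_1$, $R_2$ and $P_2$. After direct substitution the right hand side becomes
\begin{align*}
-2|\nabla A|^2+\tfrac{2}{n}|\nabla H|^2+2|\langle A,A\rangle|^2+2|R^{\perp}|^2-\tfrac{2}{n}|\langle A,H\rangle|^2+4\bar K|H|^2-2\bar K|H|^2-2n\bar K|A|^2,
\end{align*}
and the remaining work is entirely a matter of regrouping.

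First, writing $\mathring{A}=A-\tfrac{1}{n}H\otimes g$ and expanding in an orthonormal frame gives the standard identity $|\nabla\mathring{A}|^2=|\nabla A|^2-\tfrac{1}{n}|\nabla H|^2$, so the first two terms combine to $-2|\nabla\mathring{A}|^2$. Next, to identify $2|\langle A,A\rangle|^2+2|R^{\perp}|^2$ with $2R_1$, I would observe that the first summand of $R_1$ is by definition $|\langle A,A\rangle|^2$, while for the second summand I would invoke the Ricci equation \eqref{Riccieq} together with the vanishing of the mixed tangent-normal components of the ambient curvature in a constant sectional curvature space: since $\bar{R}(e_i,e_j)\nu_\alpha=\bar K(\langle e_j,\nu_\alpha\rangle e_i-\langle e_i,\nu_\alpha\rangle e_j)=0$ when $e_i,e_j$ are tangent and $\nu_\alpha$ is normal, one has $\bar{R}_{ij\alpha\beta}=0$. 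Consequently $R^{\perp}_{ij\alpha\beta}=\sum_p(A^\alpha_{ip}A^\beta_{jp}-A^\beta_{ip}A^\alpha_{jp})$, and squaring reproduces the second summand of $R_1$ verbatim. The remaining zeroth order $\bar K$ terms collapse via $4\bar K|H|^2-2\bar K|H|^2-2n\bar K|A|^2=-2n\bar K(|A|^2-\tfrac{1}{n}|H|^2)=-2n\bar K|\mathring{A}|^2$, and the term $-\tfrac{2}{n}|\langle A,H\rangle|^2$ matches $-\tfrac{2}{n}R_2$ by the first equality in the definition of $R_2$.

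The auxiliary identities claimed for $R_2$ and $P_2$ are purely algebraic. Choosing the principal normal frame $\nu_1=H/|H|$ forces $H^\alpha=|H|\delta_{\alpha,1}$, so $\sum_\alpha H^\alpha A^\alpha_{ij}=|H|h_{ij}$ and $R_2=|h|^2|H|^2$; the orthogonal decomposition $|A|^2=|h|^2+|A^-|^2$ then yields the form $|A|^2|H|^2-P_2|H|^2$ with $P_2=|A^-|^2$. Because $H^\alpha=0$ for $\alpha\ge 2$, the traceless and untraced parts of $A^\alpha$ coincide in that range, giving $|\mathring{A}^-|^2=|A^-|^2$ and hence the stated expression for $P_2$. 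Every step is computational and I do not expect a conceptual obstacle; the only delicate point is the use of $\bar{R}_{ij\alpha\beta}=0$ in the sphere, which is precisely what allows the second summand of $R_1$ to be identified cleanly with $|R^{\perp}|^2$. In a less symmetric ambient space this identification would acquire additional ambient curvature terms and the statement of the lemma would change accordingly.
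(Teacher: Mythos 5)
Your proof is correct and follows the standard computation (the paper cites \cite{AnBa10} and \cite{BakerThesis} rather than reproving the lemma, and your route — differencing \eqref{eqn_|A|^2} and $\frac{1}{n}\eqref{eqn_|H|^2}$, then regrouping via $|\nabla\mathring{A}|^2=|\nabla A|^2-\frac1n|\nabla H|^2$ and the vanishing of $\bar R_{ij\alpha\beta}$ in constant curvature — is exactly that computation). The one point worth making explicit, which you gloss as "verbatim," is that $A^\alpha_{ip}A^\beta_{jp}-A^\alpha_{jp}A^\beta_{ip}$ and the Ricci-equation expression $A^\alpha_{ip}A^\beta_{jp}-A^\beta_{ip}A^\alpha_{jp}$ are literally equal because the components are scalars and commute, so $|R^\perp|^2$ does match the second summand of $R_1$ term by term.
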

%\begin{lemma} [\cite{BakerThesis}]
%	\begin{align*}
%&R_1-\frac{1}{n} R_2\leq | \mathring{A}|^4+\frac{1}{n}|\mathring{A}|^2|H|^2+2 P_2|\mathring{A}|^2-\frac{3}{2} P_2^2-\frac{1}{n} P_2|H|^2, \\
%&R_3-R_1\geq  \frac{|\mathring{A}|^2|H|^2}{2(n-1)}-\frac{n}{2}\left(|\mathring{A}|^2-P_2\right)^2 -\max \left\{4, \frac{n+2}{2}\right\}\left(|\mathring{A}|^2-P_2\right) P_2-\frac{3}{2} P_2^2,
%	\end{align*}
%where
%	\begin{align*}
%&R_3=\sum H^\alpha A_{i j}^ {\alpha} A_{j k}^{ \beta} A_{k i}^{ \beta}.
%	\end{align*}
%	\end{lemma}
\section{Preservation of the quartic pinching condition}
In this section, we show the preservation of the quartic pinching under the mean curvature flow. Denote
	\begin{align}\label{a(x)}
a(x):=\sqrt{\left(\frac{x}{n-2}+4 \bar{K}\right)^2+(4n-16) \bar{K}^2}, \quad \mathring{a}(x)=a(x)-\frac{x}{n},
	\end{align}
%and $b(x)$ from \eqref{b(x)}. 
By direct computations, we get
	\begin{align}\label{ineqa}
	&\frac{x}{n-2}+4\bar{K}<a(x)<\frac{x}{n-2}+2\sqrt{n}\bar{K},\\
&\label{a'(x)}\mathring{a}^{\prime}(x)=\frac{\frac{x}{n-2}+4\bar{K}}{(n-2) \sqrt{\left(\frac{x}{n-2}+4 \bar{K}\right)^2+(4 n-16) \bar{K}^2}}-\frac{1}{n}<\frac{2}{n(n-2)}, \\
\label{a''(x)} &\mathring{a}^{\prime \prime}(x)=\frac{(4n-16) \bar{K}^2}{(n-2)^2\left(\sqrt{\left(\frac{x}{n-2}+4 \bar{K}\right)^2+(4n-16) \bar{K}^2}\right)^3} .
	\end{align}
\begin{lemma}[cf. \cite{DongPu}]\label{inequalities}
For $x \geq 0$ and $n\ge5, \mathring{a}$ has the following properties.
	\begin{itemize}
\item [(i)]$\frac{4 x\left(\mathring{a}^\prime \right)^2}{\mathring{a}}<1,$
\item [(ii)]$2 x \mathring{a}^{\prime\prime}+\mathring{a}^\prime<\frac{2(n-1)}{n(n+2)},$
\item [(iii)]$n \bar{K}\left(\mathring{a}+x \mathring{a}^{\prime}\right)-a\left(\mathring{a}-x \mathring{a}^{\prime}\right) \geq \frac{8n(n-4)(n-2)^2\bar{K}^4}{(x+\sqrt{8+2\sqrt{4n}}(n-2)\bar{K})^2},$ 
\item [(iv)]$2 \mathring{a}-\frac{x}{n}+x \mathring{a}^{\prime} \leq 2\sqrt{4n}\bar{K}-\frac{(n-8)x}{n(n-2)}-\frac{6(\sqrt{4n}-4)x\bar{K}}{3x+2\sqrt{4n}(n-2)\bar{K}},$ 
\item [(v)]$\frac{x}{n-2}(a+n \bar{K})-\left(\frac{x}{n-2}+4 n \sqrt{n}\bar{K}\right)\left(\mathring{a}+a-n \bar{K}-x \mathring{a}^{\prime}\right)\\
<-\frac{2 x \bar{K}}{n-2}\left (2n\sqrt{n}-n+2\right)+4 n\sqrt{n} (n-8) \bar{K}^2.$
	\end{itemize}
\end{lemma}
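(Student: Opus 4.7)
The proof of each item is a direct computation using the explicit form of $a$ and the preliminary bounds \eqref{ineqa}--\eqref{a''(x)}. It is convenient to set $y := \frac{x}{n-2} + 4\bar K$, so that
\[
a = \sqrt{y^2 + (4n-16)\bar K^2}, \qquad a' = \frac{y}{(n-2)a}, \qquad a'' = \frac{(4n-16)\bar K^2}{(n-2)^2 a^3}.
\]
Each of the five inequalities then becomes a polynomial relation in $x$, $y$, $\bar K$, and $a$, and even powers of $a$ can be removed via the identity $a^2 - y^2 = (4n-16)\bar K^2$.

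For (i), combining $\mathring{a}' < \frac{2}{n(n-2)}$ from \eqref{a'(x)} with the lower bound $\mathring{a} > \frac{2x}{n(n-2)} + 4\bar K$ implied by the left half of \eqref{ineqa} reduces the claim to $\frac{8x}{n(n-2)\bigl(x+2n(n-2)\bar K\bigr)} < 1$, which holds for $n \ge 5$. For (ii), clearing the common denominator $(n-2)a^3$ turns the inequality into the polynomial statement
\[
(n+2)\bigl(y^{3} + 3yb^{2} - 8\bar K b^{2}\bigr) < 3(n-2)\,(y^{2} + b^{2})^{3/2}, \qquad b^{2} := (4n-16)\bar K^{2},
\]
which I would verify on $[4\bar K, \infty)$ by showing that $F(y) := 3(n-2)(y^2+b^2)^{3/2} - (n+2)(y^3+3yb^2-8\bar K b^2)$ is strictly positive: the boundary value $F(4\bar K) = 8n\bar K^{3}\bigl(3(n-2)\sqrt n - 2(n+2)\bigr)$ is positive for $n \ge 5$ (this bracket is nonnegative exactly when $9n(n-2)^2 \ge 4(n+2)^2$, which holds from $n=5$ on), the leading coefficient of $F(y)$ as $y \to \infty$ is $(2n-8) > 0$, and the unique interior critical point of $F$ can be located explicitly in terms of $n$ and $\bar K$ and shown either to lie outside $[4\bar K, \infty)$ or to yield a positive value.

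For (iii), (iv), and (v) the strategy is uniform: expand the combinations $\mathring a \pm x\mathring a'$ and $\mathring a + a$ using $a'$ and $\mathring a' = a' - \tfrac1n$, clear denominators, eliminate $a^{2}$ via $a^{2} = y^{2} + (4n-16)\bar K^{2}$, and bound the remaining linear-in-$a$ terms using \eqref{ineqa}. For (iii), the denominator $\bigl(x + \sqrt{8+2\sqrt{4n}}\,(n-2)\bar K\bigr)^{2}$ on the right-hand side is precisely what emerges from completing the square on the positive cross-term $(n\bar K + a)\,x a' = \frac{xy(n\bar K + a)}{(n-2)a}$; this identifies $\sqrt{8+2\sqrt{4n}}$ as the unique constant making the residual a perfect square and, hence, makes the bound sharp.

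I expect (iii) to be the main obstacle, as it is the only item whose right-hand side is genuinely sharp, corresponding to the equality case on the Clifford-type submanifold $\mathbb S^2(r) \times \mathbb S^{n-2}(s)$ featured in the discussion of \eqref{quarticpinching}; consequently no lossy bound can be used and the polynomial identity must be tracked exactly so that the quartic-in-$\bar K$ correction survives. Items (iv) and (v) follow the same template with more cross-terms, and the coefficients $-(n-8)$ in (iv) and $(2n\sqrt n - n + 2)$ in (v) emerge naturally after grouping contributions by their $\bar K$-degree; their sign is not required for the upper bound to be meaningful (for $n \in \{5,6,7\}$ the $-(n-8)$ term merely strengthens the bound), so the stated inequalities remain valid throughout the range $n \ge 5$.
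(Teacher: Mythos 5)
Your item (i) is essentially the paper's proof (the paper uses $\mathring a' < \tfrac{2}{n(n-2)}$ and $\mathring a > \tfrac{2x}{n(n-2)}$ and lands at $\tfrac{8}{n(n-2)}<1$; you keep the extra $+4\bar K$ in the denominator, which only helps). For (ii) your reduction is a genuinely different route from the paper: using $y=\tfrac{x}{n-2}+4\bar K$ and $b^2=(4n-16)\bar K^2$ you correctly rewrite the claim as
\[
F(y):=3(n-2)(y^2+b^2)^{3/2}-(n+2)\bigl(y^3+3yb^2-8\bar K b^2\bigr)>0 \quad\text{on } [4\bar K,\infty),
\]
whereas the paper simply bounds $2x\mathring a''$ by chaining $1/a^3\le 1/(ya^2)$ with Young's inequality and then adds the $\mathring a'<\tfrac{2}{n(n-2)}$ bound at the end. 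Your endpoint and leading-order checks are right, but the interior critical-point step is not a formality and you do not do it. Solving $F'(y)=0$ gives $y^*=\frac{(n+2)\bar K}{\sqrt{2(n-1)}}$, and $y^*>4\bar K$ precisely when $n^2-28n+36>0$, i.e.\ for all $n\ge 27$. So for $n\ge 27$ the minimum of $F$ on $[4\bar K,\infty)$ sits at $y^*$, and positivity of $F(y^*)$ must be shown explicitly; your proposal asserts the dichotomy "outside the domain or positive" without establishing the second branch. That is a genuine gap. (The paper's inequality chain sidesteps this by never minimising anything.)

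For (iii)--(v) you offer only a template, not a proof, and the one specific claim you do make — that $\sqrt{8+2\sqrt{4n}}$ "emerges from completing the square on the cross-term $(n\bar K+a)xa'$" — is not carried out and does not match what the paper actually does. The paper computes $-a(\mathring a-x\mathring a')=-\tfrac{4x\bar K}{n-2}-4n\bar K^2$ and $n\bar K(\mathring a+x\mathring a')$ in closed form, combines them over a common denominator, rationalises (multiplies by the conjugate), simplifies the numerator to $8n(n-4)(n-2)\bigl(2x+n(n-2)\bar K\bigr)\bar K^4$, and then cross-multiplies against the target bound. No completion of a square occurs, and the origin of $\sqrt{8+2\sqrt{4n}}$ in that calculation is not "a perfect-square residual"; it must be verified by an actual polynomial comparison, which you omit. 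Since you yourself note that (iii) is the sharp case where nothing can be given away, the omission is precisely where the lemma's content lies. The same criticism applies, less acutely, to (iv) and (v): describing which terms will be grouped by $\bar K$-degree is not a proof of the stated coefficients $-(n-8)$ and $2n\sqrt n-n+2$.
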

\begin{proof}
i) From \eqref{a'(x)}, we have
	\begin{align*}
	\frac{4 x\left(\mathring{a}^\prime \right)^2}{\mathring{a}}<\frac{2x}{n(n-2)\mathring{a}}\frac{8}{n(n-2)}<1.
	\end{align*}
ii) Using \eqref{a'(x)}, \eqref{a''(x)} and Young's inequality, we have
	\begin{align*}
	&2 x \mathring{a}^{\prime\prime}+\mathring{a}^\prime=\frac{4x(4n-16)\bar{K}^2}{(n-2)^2\sqrt{\left(\frac{x}{n-2}+4\bar{K}\right)^2+(4n-16)\bar{K}^2}^3}\\
	&<\frac{2x(4n-16)\bar{K}^2}{(n-2)^2\left(\frac{x}{n-2}+4\bar{K}\right)\left(\left(\frac{x}{n-2}+4\bar{K}\right)^2+(4n-16)\bar{K}^2\right)}\\
        	&=\frac{2(4n-16)\bar{K}^2}{\frac{x^2}{n-2}+12x\bar{K}+\frac{16n(n-2)^2\bar{K}^3}{x}+4(n+8)(n-2)\bar{K}^2}\\
	&<\frac{4(2n-8)\bar{K}^2}{\frac{x^2}{n-2}+4\sqrt{48n}(n-2)\bar{K}^2+4(n+8)(n-2)\bar{K}^2}\\
	&<\frac{4(2n-8)\bar{K}^2}{4\sqrt{48n}(n-2)\bar{K}^2+4(n+8)(n-2)\bar{K}^2}\\
	&=\frac{2(n-4)}{(\sqrt{48n}+n+8)(n-2)}\\
	&<\frac{2(n-4)}{(n-2)(n+2)}.
	\end{align*}
Therefore,
	\begin{align*}
	2x\mathring{a}^{\prime\prime}+\mathring{a}^\prime<\frac{2(n-4)}{(n-2)(n+2)}+\frac{2}{n(n-2)}=\frac{2(n-1)}{n(n+2)}.
	\end{align*}
iii) By a direct computation, we have that
	\begin{align*}
	-a(\mathring{a}-x\mathring{a}^\prime)&=-\left(\frac{x}{n-2}+4\bar{K}\right)^2-(4n-16)\bar{K}^2+\frac{\frac{x^2}{n-2}+4x\bar{K}}{n-2}=-\frac{4x\bar{K}}{n-2}-4n\bar{K}^2.
	\end{align*}
Also, 
	\begin{align*}
	&n\bar{K}(\mathring{a}+x\mathring{a}^\prime)=n\bar{K}\left(\frac{(n-2)\left(\left(\frac{x}{n-2}+4\bar{K}\right)^2+(4n-16)\bar{K}^2\right)+\frac{x^2}{n-2}+4x\bar{K}}{(n-2)\sqrt{\left(\frac{x}{n-2}+4\bar{K}\right)^2+(4n-16)\bar{K}^2}}-\frac{2x}{n}\right)\\
	&=\frac{2n\bar{K}}{n-2}\left(\frac{x^2+6x(n-2)\bar{K}+2n(n-2)^2\bar{K}^2-\frac{x}{n}(n-2)\sqrt{x^2+8x(n-2)\bar{K}+4n(n-2)^2\bar{K}^2}}{\sqrt{x^2+8x(n-2)\bar{K}+4n(n-2)^2\bar{K}^2}}\right)\\
	&=\frac{2n\bar{K}}{n-2}\left(\frac{x^2+6x(n-2)\bar{K}+2n(n-2)^2\bar{K}^2}{\sqrt{x^2+8x(n-2)\bar{K}+4n(n-2)^2\bar{K}^2}}-\frac{x(n-2)}{n}\right).
	\end{align*}
Combining the two equations above, we obtain
	\begin{align}\label{1}
	&n\bar{K}(\mathring{a}+x\mathring{a}^\prime)-a(\mathring{a}-x\mathring{a}^\prime)\nonumber\\
	&=\frac{2n\bar{K}}{n-2}\left(\frac{x^2+6x(n-2)\bar{K}+2n(n-2)^2\bar{K}^2}{\sqrt{x^2+8x(n-2)\bar{K}+4n(n-2)^2\bar{K}^2}}-\frac{x(n-2)}{n}-\frac{2x}{n}-2(n-2)\bar{K}\right)\nonumber\\
	&=\frac{2n\bar{K}}{n-2}\left(\frac{x^2+6x(n-2)\bar{K}+2n(n-2)^2\bar{K}^2}{\sqrt{x^2+8x(n-2)\bar{K}+4n(n-2)^2\bar{K}^2}}-x-2(n-2)\bar{K}\right)\nonumber\\
	&=\frac{2n\bar{K}}{(n-2)^2}\left(\frac{x^2+6x(n-2)\bar{K}+2n(n-2)^2\bar{K}^2)a-(x+2(n-2)\bar{K})a^2(n-2)}{a^2}\right)\nonumber\\
	&=\frac{2n\bar{K}}{(n-2)^2}\left(\frac{x^2+6x(n-2)\bar{K}+2n(n-2)^2\bar{K}^2)^2-(x+2(n-2)\bar{K})^2)a^2(n-2)^2}{(x^2+6x(n-2)\bar{K}+2n(n-2)^2\bar{K}^2)a+(x+2(n-2)\bar{K})a^2(n-2)}\right).
	\end{align}
Working solely on the numerator and using \eqref{a(x)}, we see that
	\begin{align}\label{2}
	&\frac{2n\bar{K}}{(n-2)^2}\left( x^2+6x(n-2)\bar{K}+2n(n-2)^2\bar{K}^2)^2-(x+2(n-2)\bar{K})^2)a^2(n-2)^2 \right)\nonumber\\
	&=\frac{2n\bar{K}}{(n-2)^2}\Big( x^2+6x(n-2)\bar{K}+2n(n-2)^2\bar{K}^2)^2\nonumber\\
	&-(x+2(n-2)\bar{K})^2)(x^2+8x\bar{K}(n-2)+4n(n-2)^2\bar{K}^2)\Big)\nonumber\\
	&=\frac{2n\bar{K}}{(n-2)^2}\left((8n-32)x(n-2)^3\bar{K}^3+n(4n-16)(n-2)^4\bar{K}^4  \right)\nonumber\\
	&=8n(n-4)(n-2)(2x+n(n-2)\bar{K})\bar{K}^4.
	\end{align}
From \eqref{1} and \eqref{2}, we have that
	\begin{align*}
	&\frac{2n\bar{K}}{(n-2)^2}\left(\frac{x^2+6x(n-2)\bar{K}+2n(n-2)^2\bar{K}^2)^2-(x+2(n-2)\bar{K})^2)a^2(n-2)^2}{(x^2+6x(n-2)\bar{K}+2n(n-2)^2\bar{K}^2)a+(x+2(n-2)\bar{K})a^2(n-2)}\right)\\
	&=\frac{8n(n-4)(n-2)(2x+n(n-2)\bar{K})\bar{K}^4}{(x^2+6x(n-2)\bar{K}+2n(n-2)^2\bar{K}^2)a+(x+2(n-2)\bar{K})a^2(n-2)}\\
	&\ge\frac{8n(n-4)(n-2)^2\bar{K}^4}{(x+\sqrt{8+2\sqrt{4n}}(n-2)\bar{K})^2}.
	\end{align*}
The last inequality above is equivalent to
	\begin{align*}
	&x^3+\left(4\sqrt{8+2\sqrt{4n}}+n-10\right)x^2(n-2)\bar{K}+2\left(2\sqrt{4n}+n\sqrt{8+2\sqrt{4n}}-2n-8\right)x(n-2)^2\bar{K}^2\\
	&+\left(2\sqrt{4n}-8\right)n(n-2)^3\bar{K}^3+16x+8n(n-2)\bar{K}\\
	&\ge(n-2)a(x^2+6x(n-2)\bar{K}+2n(n-2)^2\bar{K}^2).
	\end{align*}
iv) By setting $A=(n-2)a=\sqrt{x^2+8x(n-2)\bar{K}+4n(n-2)^2\bar{K}^2}$, we compute
	\begin{align}\label{3}
	&3x+2\sqrt{4n}(n-2)\bar{K}-\frac{3x^2+20(n-2)x\bar{K}+8n(n-2)^2\bar{K}^2}{\sqrt{x^2+8x(n-2)\bar{K}+4n(n-2)^2\bar{K}^2}}\nonumber\\
	&=\frac{(3x+2\sqrt{4n}(n-2)\bar{K})A-(3x^2+20(n-2)x\bar{K}+8n(n-2)^2\bar{K}^2)}{A}\nonumber\\
	&=\frac{(3x+2\sqrt{4n}(n-2)\bar{K})^2A^2-(3x^2+20(n-2)x\bar{K}+8n(n-2)^2\bar{K}^2)^2}{(3x+2\sqrt{4n}(n-2)\bar{K})A^2+(3x+20(n-2)x\bar{K}+8n(n-2)^2\bar{K}^2)A}.
	\end{align}
Focusing solely on the numerator, we see that
	\begin{align}\label{4}
	&(3x+2\sqrt{4n}(n-2)\bar{K})^2A^2-(3x^2+20(n-2)x\bar{K}+8n(n-2)^2\bar{K}^2)^2=12x^3(\sqrt{4n}-4)(n-2)\bar{K}\nonumber\\
	&+8x(n-2)\bar{K}\left(\frac{n}{2}-50+12\sqrt{4n}\right)x(n-2)\bar{K}+4n(n-2)^2\bar{K}^2(12\sqrt{4n}-48)x(n-2)\bar{K}\nonumber\\
	&\ge\left(12(\sqrt{4n}-4)(n-2)x\bar{K}\right)A^2.
	\end{align}
From \eqref{3} and \eqref{4}, we see that
	\begin{align}\label{5}
	&3x+2\sqrt{4n}(n-2)\bar{K}-\frac{3x^2+20(n-2)x\bar{K}+8n(n-2)^2\bar{K}^2}{\sqrt{x^2+8x(n-2)\bar{K}+4n(n-2)^2\bar{K}^2}}\nonumber\\
	&\ge\frac{\left(12(\sqrt{4n}-4)(n-2)x\bar{K}\right)A}{(3x+2\sqrt{4n}(n-2)\bar{K})A+(3x+20(n-2)x\bar{K}+8n(n-2)^2\bar{K}^2)}\ge\frac{6(\sqrt{4n}-4)(n-2)x\bar{K}}{3x+2\sqrt{4n}(n-2)\bar{K}}.
	\end{align}
Then,
	\begin{align*}
	2\mathring{a}-\frac{x}{n}+x\mathring{a}^\prime&=\frac{3x^2+20(n-2)x\bar{K}+8n(n-2)^2\bar{K}^2}{(n-2)A}-\frac{4x}{n}\\
	&\le2\sqrt{4n}\bar{K}-\frac{(n-8)x}{n(n-2)}-\frac{6(\sqrt{4n}-4)x\bar{K}}{3x+2\sqrt{4n}(n-2)\bar{K}},
	\end{align*}
where the last inequality is equivalent to the last inequality in \eqref{5}.

v) We compute
	\begin{align*}
	&\frac{x}{n-2}(a+n \bar{K})-\left(\frac{x}{n-2}+4 n \sqrt{n}\bar{K}\right)\left(\mathring{a}+a-n \bar{K}-x \mathring{a}^{\prime}\right)\\
	&=\frac{x}{n-2}\left(2n\bar{K}+x\mathring{a}'-\mathring{a}\right)+4n\sqrt{n}\bar{K}\left(n\bar{K}+x\mathring{a}'-\mathring{a}-a\right).
	\end{align*}
Using \eqref{ineqa} and \eqref{a'(x)}, we have
	\begin{align*}
	&\frac{x}{n-2}\left(2n\bar{K}+x\mathring{a}'-\mathring{a}\right)+4n\sqrt{n}\bar{K}\left(n\bar{K}+x\mathring{a}'-\mathring{a}-a\right)\\
	&<\frac{x}{n-2}\left( 2n\bar{K}+\frac{2x}{n(n-2)}-\left(\frac{2x}{n(n-2)}+4\bar{K}\right)\right)\\
	&+4n\sqrt{n}\left(n\bar{K}+\frac{2x}{n(n-2)}-\left(\frac{2x}{n(n-2)}+4\bar{K}\right)-\left(\frac{x}{n-2}+4\bar{K}\right)\right)\\
	&=\frac{x}{n-2}\left(2n\bar{K}-4\bar{K}\right)+4n\sqrt{n}\bar{K}\left(n\bar{K}-8\bar{K}-\frac{x}{n-2}\right)\\
	&=-\frac{2 x \bar{K}}{n-2}\left (2n\sqrt{n}-n+2\right)+4 n\sqrt{n} (n-8) \bar{K}^2.
	\end{align*}
\end{proof}
\begin{lemma}\label{3.1dp}
For $x\ge0$ and $n\ge8$, the following inequality holds.
	\begin{align*}
	a(\mathring{a}-x\mathring{a}^\prime)-n\bar{K}(\mathring{a}+x\mathring{a}^\prime)+P_2\left(2\mathring{a}-\frac{x}{n}+x\mathring{a}^\prime\right)-\frac{3}{2}P_2^2<0.
	\end{align*}
\end{lemma}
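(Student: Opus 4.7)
The plan is to treat the left-hand side as a concave quadratic in the non-negative variable $P_2$ and bound its maximum using the estimates collected in Lemma \ref{inequalities}. Setting
\[
u(x) := 2\mathring{a} - \tfrac{x}{n} + x\mathring{a}^{\prime} \quad\text{and}\quad v(x) := a(\mathring{a}-x\mathring{a}^{\prime}) - n\bar{K}(\mathring{a}+x\mathring{a}^{\prime}),
\]
completion of the square yields
\[
v(x) + u(x)P_2 - \tfrac{3}{2}P_2^{2} \;=\; -\tfrac{3}{2}\Bigl(P_2 - \tfrac{u(x)}{3}\Bigr)^{\!2} + v(x) + \tfrac{u(x)^{2}}{6}.
\]
When $u(x)\le 0$ the maximum over $P_2\ge 0$ is attained at $P_2=0$ and equals $v(x)$, which is strictly negative by Lemma \ref{inequalities}(iii). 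Hence the task reduces to proving
\[
v(x) + \tfrac{u(x)^{2}}{6} < 0
\]
in the regime $u(x)>0$.

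For this, I would apply Lemma \ref{inequalities}(iii) to obtain the explicit negative upper bound
\[
v(x) \;\le\; -\,\frac{8n(n-4)(n-2)^{2}\bar{K}^{4}}{\bigl(x+\sqrt{8+2\sqrt{4n}}\,(n-2)\bar{K}\bigr)^{\!2}},
\]
and Lemma \ref{inequalities}(iv) to control $u(x)$. The hypothesis $n\ge 8$ enters precisely here: it ensures that the term $-\frac{(n-8)x}{n(n-2)}$ in (iv) has the correct (non-positive) sign and can be discarded, yielding
\[
u(x) \;\le\; 2\sqrt{4n}\,\bar{K} - \frac{6(\sqrt{4n}-4)x\bar{K}}{3x+2\sqrt{4n}\,(n-2)\bar{K}}.
\]
Substituting these into $v(x)+u(x)^{2}/6$ produces a rational expression in $x$ and $\bar{K}$; after clearing denominators, the desired inequality reduces to a polynomial inequality which is checked directly using $n\ge 8$. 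The refined comparison provided by Lemma \ref{inequalities}(v), in which the factor $(n-8)$ again appears, is expected to be needed to absorb the cross terms arising in the expansion of $u(x)^{2}$.

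The main obstacle will be this final polynomial verification. The estimate (iii) is essentially sharp at $x=0$ (equality holds there for the background $\mathbb{S}^{n+m}(1/\sqrt{\bar{K}})$), so the slack provided by (iv)--(v) together with the threshold $n\ge 8$ must just suffice to overcome the positive contribution $u^{2}/6$. In particular, the bounds must be combined in a form in which no coefficient degenerates in the limit $n\to 8$; this is the algebraic delicacy that mirrors the corresponding step (cf.\ \cite{DongPu}) in the $n\ge 7$ case for the sharper pinching $\sqrt{(\frac{|H|^{2}}{n-1}+2\bar{K})^{2}+(2n-4)\bar{K}^{2}}$ treated by Pu, and is the reason the threshold lifts by one when passing from the exponent $n-1$ to $n-2$ in $a(x)$.
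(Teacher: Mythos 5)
Your reduction—view the LHS as a concave quadratic in $P_2$, dispose of the regime $u(x)\le 0$ at $P_2=0$, and for $u(x)>0$ prove $v(x)+u(x)^2/6<0$ by bounding $v$ with Lemma~\ref{inequalities}(iii) and $u$ with Lemma~\ref{inequalities}(iv)—is the same reduction the paper performs, there phrased as checking that the discriminant-type quantity $u-\sqrt{-6v}$ is negative. Your observation that (iii) is exactly saturated at $x=0$ is also correct.

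The genuine gap is the decision to \emph{discard} the term $-\frac{(n-8)x}{n(n-2)}$ from (iv). Although dropping a non-positive term is formally admissible when producing an upper bound for $u(x)$, the resulting bound is too weak to close the argument: without that term the right-hand side of (iv) tends to the positive constant $8\bar{K}$ as $x\to\infty$, while the lower bound on $-v(x)$ from (iii) decays like $\bar{K}^4/x^2$. Along your estimate one therefore has $v(x)+u(x)^2/6\to \tfrac{64}{6}\bar{K}^2>0$, and the target inequality cannot be established for large $x$. The term you discarded is precisely what makes the (iv)-bound for $u$ eventually negative when $n>8$; the paper keeps it, and after multiplying through by $x+\sqrt{8+2\sqrt{4n}}(n-2)\bar{K}$ and clearing the remaining rational term, it surfaces as the leading coefficient $-\frac{n-8}{n(n-2)}$ of a quadratic in $x$. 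It is the sign of this coefficient that allows the verification to be reduced to a discriminant condition in $n$ alone.

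Your expectation that Lemma~\ref{inequalities}(v) is needed to absorb cross terms from $u(x)^2$ is also not borne out: (v) plays no role in this lemma's proof (it is invoked later, inside Proposition~\ref{preservation}, to handle the $\varepsilon$-correction coming from $\omega$). What the paper actually does after the algebraic reduction is bound the resulting $n$-dependent constants numerically, introducing $g(n)$, $h(n)$ and thresholds $k_n$, and splitting the range into $8\le n\le 100$ and $n\ge 101$. This final numerical step—the bulk of the paper's proof—is entirely absent from your proposal, and the heuristic you substitute for it (dropping the $(n-8)$-term and expecting (v) to rescue the estimate) would lead the argument astray.
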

\begin{proof}
We assume $\bar{K}=1$, without loss of generality. The discriminant of the binomial satisfies
	\begin{align*}
	&\Delta=\left(2\mathring{a}-\frac{x}{n}+x\mathring{a}^\prime\right)-\sqrt{6(n(\mathring{a}+x\mathring{a}^\prime)-a(\mathring{a}-x\mathring{a}^\prime))}\\
	&=2\sqrt{4n}-\frac{(n-8)x}{n(n-2)}-\frac{6(\sqrt{4n}-4)x}{3x+2\sqrt{2n}(n-2)}-\frac{4\sqrt{n}(n-2)\sqrt{3(n-4)}}{x+\sqrt{8+2\sqrt{4n}}(n-2)},
%	&=\frac{3x^2+20(n-2)x+8n(n-2)^2}{(n-2)\sqrt{x^2+8x(n-2)+4n(n-2)^2}}-\frac{4x}{n}\\
%	&-\sqrt{\frac{12n}{n-2}\left(\frac{x^2+6x(n-2)+2n(n-2)^2}{\sqrt{x^2+8x9n-2)+4n(n-2)^2}}-x-2(n-2)\right)}.
	\end{align*}
using (iii) and (iv) from Lemma \ref{inequalities}. We will prove that the above discriminant is negative. We compute
	\begin{align}\label{discriminantinsidelemma}
	&\left(x+\sqrt{8+2\sqrt{4n}}(n-2)\right)\left(2\sqrt{4n}-\frac{(n-8)x}{n(n-2)}-\frac{6(\sqrt{4n}-4)x}{3x+2\sqrt{2n}(n-2)}\right)\nonumber\\
	&=\left(x+\sqrt{8+2\sqrt{4n}}(n-2)\right)\left(2\sqrt{4n}-\frac{(n-8)x}{n(n-2)}\right)\nonumber\\
	&-\left(x+\sqrt{8+2\sqrt{4n}}(n-2)\right)\frac{6(\sqrt{4n}-4)x}{3x+2\sqrt{2n}(n-2)}\nonumber\\
	&<\left(x+\sqrt{8+2\sqrt{4n}}(n-2)\right)\left(2\sqrt{4n}-\frac{(n-8)x}{n(n-2)}\right)-6(\sqrt{4n}-4)x\frac{\sqrt{8+2\sqrt{4n}}}{2\sqrt{4n}}.
	\end{align}
We need to show that for the last line of \eqref{discriminantinsidelemma}, we have
	\begin{align}\label{eqref3.2}
	&\left(x+\sqrt{8+2\sqrt{4n}}(n-2)\right)\left(2\sqrt{4n}-\frac{(n-8)x}{n(n-2)}\right)-6(\sqrt{4n}-4)x\frac{\sqrt{8+2\sqrt{4n}}}{2\sqrt{4n}}\nonumber\\
	&<4\sqrt{n}(n-2)\sqrt{3(n-4)}.
	\end{align}
The inequality \eqref{eqref3.2} is equivalent to
	\begin{align}\label{binomial}
	&-\frac{n-8}{n(n-2)}x^2+\left(2\sqrt{4n}-\left(4-\frac{8}{n}-3\sqrt{\frac{4}{n}}\right)\sqrt{8+2\sqrt{4n}}\right)x\nonumber\\
	&<2\sqrt{n}(n-2)\left(\sqrt{12(n-4)}-\sqrt{32+8\sqrt{4n}}\right).
	\end{align}
	
For $x\ge 0$ and $n\ge 8$, we will prove that the discriminant of \eqref{binomial} in negative. It is enough to show that 	
\begin{align}\label{negativediscr}
	2\sqrt{4n}-\left(4-\frac{8}{n}-3\sqrt{\frac{4}{n}}\right)\sqrt{8+2\sqrt{4n}}-\sqrt{\frac{8(n-8)}{\sqrt{n}}\left(\sqrt{12(n-4)}-\sqrt{32+8\sqrt{4n}}\right)}<0.
	\end{align}
We set $g(n)=\frac{\left(4-\frac{8}{n}-3\sqrt{\frac{4}{n}}\right)}{2}=2-\frac{4}{n}-\frac{3}{\sqrt{n}}$ and $h(n)=\frac{\sqrt{8+2\sqrt{4n}}}{\sqrt{4n}}=\sqrt{\frac{2}{n}+\frac{1}{\sqrt{n}}}$. From the monotonicity of $g(n)$ and $h(n)$, for $8\le n\le100$, we have the following.
	\begin{align*}
	&1-g(n)h(n)=1-2\sqrt{\frac{2}{n}+\frac{1}{\sqrt{n}}}+\left(\frac{4}{n}+\frac{3}{\sqrt{n}}\right)\sqrt{\frac{2}{n}+\frac{1}{\sqrt{n}}}\\
	&\le1-2\sqrt{\frac{2}{100}+\frac{1}{\sqrt{100}}}+\left(\frac{4}{8}+\frac{3}{\sqrt{8}}\right)\sqrt{\frac{2}{8}+\frac{1}{\sqrt{8}}}\\
	&=1.52=\sqrt{2.31}.
	\end{align*}
On the other hand, for $8\le n\le 100$, we have
	\begin{align*}
&\frac{\frac{8(n-8)}{\sqrt{n}}\left(\sqrt{12(n-4)}-\sqrt{32+8\sqrt{4n}}\right)}{\left(2\sqrt{4n}\right)^2}=\left(\frac{1}{2}-\frac{4}{n}\right)\left(\sqrt{12\left(1-\frac{4}{n}\right)}-\sqrt{\frac{32}{n}+\frac{16}{\sqrt{n}}}\right)\nonumber\\
&\le \frac{1}{2}\left(\sqrt{12\left(1-\frac{4}{100}\right)}-\sqrt{\frac{32}{100}+\frac{16}{\sqrt{100}}}\right)-\frac{4}{100}\sqrt{12\left(1-\frac{4}{8}\right)}+\frac{4}{8}\sqrt{\frac{32}{8}+\frac{16}{\sqrt{8}}}\nonumber\\
&<2.46
	\end{align*}
and so,
	\begin{align}\label{firstnumber}
	2.31<\left(\frac{1}{2}-\frac{4}{n}\right)\left(\sqrt{12\left(1-\frac{4}{n}\right)}-\sqrt{\frac{32}{n}+\frac{16}{\sqrt{n}}}\right)<2.46.
	\end{align}
%	\begin{align*}
%	&1-g(n)h(n)=1-2\sqrt{\frac{2}{n}+\frac{1}{\sqrt{n}}}+\left(\frac{4}{n}+\frac{3}{\sqrt{n}}\right)\sqrt{\frac{2}{n}+\frac{1}{\sqrt{n}}}\\
%	&\le1-2\sqrt{\frac{2}{100}+\frac{1}{\sqrt{100}}}+\left(\frac{4}{9}+\frac{3}{\sqrt{9}}\right)\sqrt{\frac{2}{9}+\frac{1}{\sqrt{9}}}\\
%	&=1.384=\sqrt{1.915}.
%	\end{align*}
%On the other hand, for $9\le n\le 100$, we have
%	\begin{align*}
%&\frac{\frac{8(n-8)}{\sqrt{n}}\left(\sqrt{12(n-4)}-\sqrt{32+8\sqrt{4n}}\right)}{\left(2\sqrt{4n}\right)^2}\\
%&=\left(\frac{1}{2}-\frac{4}{n}\right)\left(\sqrt{12\left(1-\frac{4}{n}\right)}-\sqrt{\frac{32}{n}+\frac{16}{\sqrt{n}}}\right)\nonumber\\
%&\le \frac{1}{2}\left(\sqrt{12\left(1-\frac{4}{100}\right)}-\sqrt{\frac{32}{100}+\frac{16}{\sqrt{100}}}\right)-\frac{4}{100}\sqrt{12\left(1-\frac{4}{9}\right)}+\frac{4}{9}\sqrt{\frac{32}{9}+\frac{16}{\sqrt{9}}}\nonumber\\
%&<2.226
%	\end{align*}
%and so,
%	\begin{align}\label{firstnumber}
%	1.915<\left(\frac{1}{2}-\frac{4}{n}\right)\left(\sqrt{12\left(1-\frac{4}{n}\right)}-\sqrt{\frac{32}{n}+\frac{16}{\sqrt{n}}}\right)<2.226.
%	\end{align}
In the same way,
	\begin{align}
1.249&<\left(\frac{1}{2}-\frac{4}{n}\right)\left(\sqrt{12\left(1-\frac{4}{n}\right)}-\sqrt{\frac{32}{n}+\frac{16}{\sqrt{n}}}\right)<1.787, \text { for } n\ge 101.\label{secondnumber} 
	\end{align}
Set $k_n= \begin{cases}2.31, & 8 \leq n \leq 100, \\ 1.249, & n\ge 101. \end{cases}$ The following inequalities follow from \eqref{firstnumber} and \eqref{secondnumber}
	\begin{align*}
&1-g(n) h(n) <\sqrt{k_n}, \\
k_n & <\left(\frac{1}{2}-\frac{4}{n}\right)\left(\sqrt{12\left(1-\frac{4}{n}\right)}-\sqrt{\frac{32}{n}+\frac{16}{\sqrt{n}}}\right),
	\end{align*}
which means that \eqref{negativediscr} is satisfied and this completes the proof.
%Indeed, for $n\ge 9$, we compute
%	\begin{align*}
%	&\lim_{n\to\infty}\left\{ 2\sqrt{4n}-\left(4-\frac{8}{n}-3\sqrt{\frac{4}{n}}\right)\sqrt{8+2\sqrt{4n}}-\sqrt{\frac{8(n-8)}{\sqrt{n}}\left(\sqrt{12(n-4)}-\sqrt{32+8\sqrt{4n}}\right)}\right\}\\
%	&=\lim_{n\to\infty} 2\sqrt{4n}\left\{ 1-\left(2-\frac{4}{n}-\frac{3}{\sqrt{n}}\right) \sqrt{\frac{2}{n}+\frac{1}{\sqrt{n}}}-\sqrt{\left(\frac{1}{2}-\frac{4}{n}\right)\left(\sqrt{12\left(1-\frac{4}{n}\right)}-\sqrt{\frac{32}{n}+\frac{16}{\sqrt{n}}}\right)}\right\}\\
%	&=\left(1-\sqrt[4]{3}\right)\lim_{n\to\infty} 2\sqrt{4n}\\
%	&=-\infty.
%	\end{align*}
%Therefore, for $n\ge 9$, \eqref{eqref3.2} is satisfied and, thus, the discriminant $\Delta$ is negative. For $n=8$, we see that 
%\begin{align*}
%	&\Delta=2\sqrt{32}-\frac{6(\sqrt{32}-4)x}{3x+48}-\frac{24\sqrt{96}}{x+6\sqrt{8+2\sqrt{32}}},
%	\end{align*}
%which can never be negative, for $x\ge0$. The same holds for $n\le7$ and this completes the proof.
\end{proof}
We denote $\mathring{a}^{\prime}\left(|H|^2\right)$ and $\mathring{a}^{\prime \prime}\left(|H|^2\right)$ by $\mathring{a}^{\prime}$ and $\mathring{a}^{\prime \prime}$, respectively. Then the evolution equation of $\mathring{a}$ satisfies
	\begin{align}\label{evolutionofmathringa}
\frac{\partial}{\partial t} \mathring{a}=\Delta \mathring{a}+2 \mathring{a}^{\prime} \cdot\left(-|\nabla H|^2+R_2+n \bar{K}|H|^2\right)-\mathring{a}^{\prime \prime} \cdot|\nabla| H|^2|^2 .
	\end{align}
For $0<\varepsilon \ll \frac{1}{n^{5 / 2}}$, set
	\begin{align}\label{omega}
\omega=\frac{|H|^2}{n-2}+4 n \sqrt{n} \bar{K},
	\end{align}
where from \eqref{ineqa} and \eqref{a'(x)}, we have
	\begin{align*}
\omega>n \mathring{a}>a.
	\end{align*}
In the following theorem, we prove the preservation of the quartic pinching condition under the mean curvature flow.
\begin{proposition}[cf.\cite{DongPu}, Proposition 3.2]\label{preservation}
Let $F_0: \mathcal{M} \rightarrow \mathbb{S}^{n+m}\left(\frac{1}{\sqrt{\bar{K}}}\right)$ be an $n$-dimensional, compact submanifold immersed in the sphere, $n\ge 8,m\ge2$. Suppose there exists $0<\varepsilon\ll \frac{1}{n^{5 / 2}}$, such that 
	\begin{align}\label{pinching}
	|A|^2 \leq a\left(|H|^2\right)-\varepsilon \omega.
	\end{align}
Then, \eqref{pinching} holds along the mean curvature flow for every $t \in[0, T)$, where $T \leq \infty$.
\end{proposition}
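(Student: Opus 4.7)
The plan is to apply the parabolic maximum principle to the pinching defect
\begin{align*}
Q \;:=\; |A|^2 - a(|H|^2) + \varepsilon\omega \;=\; |\mathring{A}|^2 - \mathring{a}(|H|^2) + \varepsilon\omega,
\end{align*}
the second equality following from $|\mathring{A}|^2 = |A|^2 - |H|^2/n$ and $\mathring{a} = a - x/n$. The hypothesis \eqref{pinching} is $Q(\cdot, 0) \le 0$. By compactness of $\mathcal{M}$, if $Q$ ever becomes positive there is a first time $t_0 > 0$ and a space--time point $(p_0, t_0)$ at which $Q = 0$ and $\nabla Q = 0$, $\Delta Q \le 0$, $\partial_t Q \ge 0$; it therefore suffices to show that $(\partial_t - \Delta) Q < 0$ at any such contact point, which yields the required contradiction.

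The evolution of $Q$ is obtained by combining \eqref{eqn_|A|^2traceless} for $|\mathring{A}|^2$, \eqref{eqn_|H|^2} for $|H|^2$ (via the chain rule applied to $\mathring{a}(|H|^2)$, which generates a $-\mathring{a}''|\nabla|H|^2|^2$ contribution), and the linearity of $\omega$ in $|H|^2$. The result splits naturally into a \textbf{reaction part} and a \textbf{gradient part}. For the reaction part, I rewrite $R_2 = (|A|^2 - P_2)|H|^2$, impose the contact relation $|\mathring{A}|^2 = \mathring{a} - \varepsilon\omega$, and apply a Li--Li/Lu-type bound for $R_1$. Up to the $\varepsilon$-perturbation, this assembles into a quadratic in $P_2$ of precisely the form appearing in Lemma \ref{3.1dp}, namely
\begin{align*}
-\tfrac{3}{2}P_2^2 \;+\; \bigl(2\mathring{a} - \tfrac{x}{n} + x\mathring{a}'\bigr)P_2 \;+\; a(\mathring{a} - x\mathring{a}') - n\bar{K}(\mathring{a} + x\mathring{a}'),
\end{align*}
with $x = |H|^2$. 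Lemma \ref{3.1dp} gives strict negativity of this quadratic for $n \ge 8$; parts (iii) and (iv) of Lemma \ref{inequalities} furnish the quantitative margin, and part (v) is then used to dominate the $\varepsilon$-driven contribution $\tfrac{2\varepsilon}{n-2}(R_2 + n\bar{K}|H|^2)$ under the smallness assumption $\varepsilon \ll n^{-5/2}$.

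For the gradient part, the expression $-2|\nabla A|^2 + 2\mathring{a}'|\nabla H|^2 + \mathring{a}''|\nabla |H|^2|^2$ (together with a manifestly negative $-\tfrac{2\varepsilon}{n-2}|\nabla H|^2$) must be non-positive. Expanding $|\nabla|H|^2|^2 = 4|H|^2|\nabla|H||^2$ and $|\nabla H|^2 = |H|^2|\nabla^\perp\nu_1|^2 + |\nabla|H||^2$ via \eqref{2.23naff}, the coefficient multiplying $|\nabla|H||^2$ is $2(\mathring{a}' + 2|H|^2\mathring{a}'')$, which by Lemma \ref{inequalities}(ii) is strictly smaller than $\tfrac{4(n-1)}{n(n+2)}$; the coefficient multiplying $|H|^2|\nabla^\perp\nu_1|^2$ is $2\mathring{a}'$, dominated similarly using \eqref{a'(x)}. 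The Kato-type inequalities \eqref{4.21naff} and \eqref{4.22naff}, combined with the decomposition of $|\nabla^\perp A|^2$ in Proposition \ref{prop2.2naff}, then show that both contributions are absorbed by $2|\nabla A|^2$, rendering the gradient part non-positive.

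The main obstacle is the reaction side rather than the gradient side: one has to keep track of how the $\varepsilon\omega$ correction perturbs the reaction quadratic in $P_2$ by terms of order $\varepsilon|H|^4$, and verify that Lemma \ref{3.1dp}'s strict margin survives this perturbation. This is precisely why the quantitative bounds in Lemma \ref{inequalities}(iii) (quartic in $\bar{K}$, governing the regime where $|H|$ is large) and Lemma \ref{inequalities}(v) (governing the linear-in-$\varepsilon$ corrections) are tuned as they are, and why the assumption $\varepsilon \ll n^{-5/2}$ is exactly the threshold at which these estimates close up.
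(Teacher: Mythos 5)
Your proposal is correct and follows essentially the same route as the paper's proof of Proposition~\ref{preservation}: monitor the scalar $Q = |\mathring{A}|^2 - \mathring{a}(|H|^2) + \varepsilon\omega$, compute its evolution, absorb the gradient terms using the Kato-type estimates and Lemma~\ref{inequalities}(ii), and show the reaction terms reduce (at a contact point, i.e.\ modulo terms carrying a factor of $Q$) to the $P_2$-quadratic handled by Lemma~\ref{3.1dp}, with the $\varepsilon$-perturbations controlled by Lemma~\ref{inequalities}(iii) and (v). The one cosmetic difference is in the gradient part: you split $|\nabla H|^2 = |H|^2|\nabla^\perp\nu_1|^2 + |\nabla|H||^2$ via \eqref{2.23naff} and apply \eqref{4.21naff} and \eqref{4.22naff} separately, bounding the $|\nabla|H||^2$-coefficient by Lemma~\ref{inequalities}(ii) and the $|H|^2|\nabla^\perp\nu_1|^2$-coefficient by the cruder $\mathring{a}' < \tfrac{2}{n(n-2)}$, whereas the paper uses the combined estimate $|\nabla\mathring{A}|^2 \geq \tfrac{2(n-1)}{n(n+2)}|\nabla H|^2$ together with $|\nabla|H|^2|^2 \leq 4|H|^2|\nabla H|^2$ in a single stroke; both absorption arguments close, so the difference is stylistic rather than substantive.
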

\begin{proof}
Let $f=|\mathring{A}|^2-\mathring{a}+\varepsilon \omega$. By \eqref{eqn_H}, \eqref{eqn_|A|^2traceless} and \eqref{evolutionofmathringa}, we have
	\begin{align*}
& \left(\partial_t-\Delta\right) f=-2|\nabla \mathring{A}|^2+2\left(\mathring{a}^{\prime}-\frac{\varepsilon}{n-2}\right)|\nabla H|^2+\mathring{a}^{\prime \prime}|\nabla| H|^2|^2+2 R_1-\frac{2}{n} R_2-2 n \bar{K} |\mathring{A}|^2\\
&-2\left(\mathring{a}^{\prime}-\frac{\varepsilon}{n-2}\right) \left(R_2+n \bar{K}|H|^2\right)\\
& \leq 2\left(-\frac{2(n-1)}{n(n+2)}+\mathring{a}^{\prime}-\frac{\varepsilon}{n-2}+2|H|^2 \mathring{a}^{\prime \prime}\right)|\nabla H|^2+2|\mathring{A}|^2\left(|\mathring{A}|^2+\frac{1}{n}|H|^2-n \bar{K}\right)\\
&+2 P_2\left(2|\mathring{A}|^2-\frac{1}{n}|H|^2-\frac{3}{2} P_2\right)-2\left(\mathring{a}^{\prime}-\frac{\varepsilon}{n-2}\right)\cdot|H|^2\left(|\mathring{A}|^2+\frac{1}{n}|H|^2+n \bar{K}-P_2\right) .
	\end{align*}
From Lemma \ref{inequalities} (ii), the coefficient of $|\nabla H|^2$ is negative. Replacing $|\mathring{A}|^2$ with $f+\mathring{a}-\varepsilon\omega$, the above formula becomes
	\begin{align*}
& \left(\partial_t-\Delta\right) f \leq 2 f\left(2 \mathring{a}+\frac{1}{n}|H|^2-n \bar{K}-\mathring{a}^{\prime}|H|^2+2 P_2+\varepsilon\left(\frac{|H|^2}{n-2}-2 \omega\right)\right)+2 f^2\\
&+2\left(a\left(\mathring{a}-|H|^2 \mathring{a}^{\prime}\right)-n \bar{K}\left(\mathring{a}+|H|^2 \mathring{a}^{\prime}\right)\right) \\
& +2 P_2\left(2 \mathring{a}-\frac{1}{n}|H|^2+|H|^2 \mathring{a}^{\prime}-\varepsilon\left(\frac{|H|^2}{n-2}+2 \omega\right)-\frac{3}{2} P_2\right) \\
& +2 \varepsilon\left(\frac{|H|^2}{n-2} \cdot(a+n \bar{K})-\omega\left(a+\mathring{a}-n \bar{K}-\mathring{a}^{\prime}|H|^2\right)\right) +2 \varepsilon^2 \omega\left(\omega-\frac{|H|^2}{n-2}\right) .
	\end{align*}
From Lemma \ref{inequalities} (v) and the choice of $\varepsilon$, we have
	\begin{align*}
& \varepsilon\left(\frac{|H|^2}{n-2} \cdot(a+n \bar{K})-\omega\left(a+\mathring{a}-n \bar{K}-\mathring{a}^{\prime}|H|^2\right)\right)+\varepsilon^2 \omega\left(\omega-\frac{|H|^2}{n-2}\right) \\
& =\varepsilon\left(\frac{|H|^2}{n-2} \cdot(a+n \bar{K})-\omega\left(a+\mathring{a}-n \bar{K}-\mathring{a}^{\prime}|H|^2\right)+4 n \sqrt{n} \bar{K} \varepsilon \omega\right) \\
& <2 \varepsilon\left(2n \sqrt{n} \bar{K}^2(n-8+4 \varepsilon n \sqrt{n})-\frac{|H|^2 \bar{K}}{n-2}(2n \sqrt{n}-n+2-2\varepsilon n \sqrt{n})\right)\\
& <2 \varepsilon\left(2n \sqrt{n} \bar{K}^2(n-4)-\frac{|H|^2 \bar{K}}{n-2}(2n \sqrt{n}-n)\right),
	\end{align*}
where the last inequality comes from the fact that $\varepsilon\le\frac{1}{\sqrt[5]{n}}$. So, using Lemma \ref{inequalities} (iii), we have
	\begin{align}\label{3.10dp}
&2 \varepsilon\left(2n \sqrt{n} \bar{K}^2(n-4)-\frac{|H|^2 \bar{K}}{n-2}(2n \sqrt{n}-n)\right) =64\varepsilon n^2 \sqrt{n} \bar{K}\left(\frac{(n-4) \bar{K}}{16n}-\frac{|H|^2}{16n}\frac{2\sqrt{n}-1}{2\sqrt{n}(n-2)}\right) \nonumber\\
& <64 \varepsilon n^2 \sqrt{n} \bar{K}\left(\frac{n(n-4)(n-2)^2 \bar{K}^3}{\left(|H|^2+4 n(n-2) \bar{K}\right)^2}-\frac{|H|^2}{16n}\frac{2\sqrt{n}-1}{2\sqrt{n}(n-2)}\right)\nonumber \\
&<64 \varepsilon n^2 \sqrt{n} \bar{K}\left(\frac{n(n-4)(n-2)^2 \bar{K}^3}{\left(|H|^2+4 n(n-2) \bar{K}\right)^2}\right) \nonumber\\
& <8 n^2 \sqrt{n} \varepsilon\left(\frac{8n(n-4)(n-2)^2\bar{K}^4}{\left(|H|^2+\sqrt{8+2\sqrt{4n}}(n-2)\bar{K}\right)^2}\right).\end{align}
From \eqref{3.10dp}, Lemmas \ref{inequalities} (iii), \ref{3.1dp} and the choice of $\varepsilon$, we have
	\begin{align*}
& a\left(\mathring{a}-|H|^2 \mathring{a}^{\prime}\right)-n \bar{K}\left(\mathring{a}+|H|^2 \mathring{a}^{\prime}\right) +P_2\left(2 \mathring{a}-\frac{1}{n}|H|^2+|H|^2 \mathring{a}^{\prime}-\varepsilon\left(\frac{|H|^2}{n-2}+2 \omega\right)-\frac{3}{2} P_2\right) \\
& +\varepsilon\left(\frac{|H|^2}{n-2}(a+n \bar{K})-\omega\left(a+\mathring{a}-n \bar{K}-\mathring{a}^{\prime}|H|^2\right)\right)+\varepsilon^2 \omega\left(\omega-\frac{|H|^2}{n-2}\right) \\
& <a\left(\mathring{a}-|H|^2 \mathring{a}^{\prime}\right)-n \bar{K}\left(\mathring{a}+|H|^2 \mathring{a}^{\prime}\right)+P_2\left(2 \mathring{a}-\frac{1}{n}|H|^2+|H|^2 \mathring{a}^{\prime}\right)-\frac{3}{2} P_2^2 \\
&+8 n^2 \sqrt{n} \varepsilon\left(\frac{8n(n-4)(n-2)^2\bar{K}^4}{\left(|H|^2+\sqrt{8+2\sqrt{4n}}(n-2)\bar{K}\right)^2}\right)\\
&<0,
	\end{align*}
for $\varepsilon\to0$. The proposition follows from the maximum principle.
\end{proof}
\section{Gradient estimates}
This section presents a proof of the gradient estimate for the mean curvature flow. We establish this estimate directly from the quartic curvature bound $|A|^2 < a-\varepsilon\omega$, without relying on the asymptotic cylindrical estimates. In fact, we demonstrate that the cylindrical estimates follow as a consequence of the gradient estimates we derive here. 
\\
The gradient estimates are pointwise estimates that rely solely on the mean curvature (or, equivalently, the second fundamental form) at a point and not on the maximum of curvature, as is the case with more general parabolic-type derivative estimates. The importance of these estimates is that they allow us to control the mean curvature and, hence, the full second fundamental form on a neighbourhood of fixed size.
%Specifically, we obtain
%\begin{align*}
%\frac{3}{n+2}-\frac{1}{n}=\frac{2(n-1)}{n(n+2)}>0.
%\end{align*}
%This inequality enables us to combine the derivative terms in the evolution equation of $|A|^2-a+\varepsilon\omega$ with the Kato-type inequality from Lemma \ref{katoinequality}.
\begin{theorem}[cf.\cite{HuSi09}, Section 6]\label{thm_gradient}
Let $ \mathcal{M}_t , t \in [0,T)$ be a closed $n$-dimensional, quadratically bounded solution to the mean curvature flow in $\mathbb{S}^{n+m}$, that is
	\begin{align*}
	|A|^2 -a+\varepsilon\omega<0.
	\end{align*}
 Then, there exists a constant $ \gamma_1= \gamma_1 (n, \mathcal M_0)$ and a constant $ \gamma_2 = \gamma_2 ( n , \mathcal{M}_0)$, such that the flow satisfies the uniform estimate
	\begin{align*}
	|\nabla A|^2 \leq \gamma_1 |A|^4+\gamma_2,
	\end{align*}
 for every $t\in [0, T)$.
\end{theorem}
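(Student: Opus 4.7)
The plan is to apply the parabolic maximum principle to an auxiliary function of the form
\begin{equation*}
g := |\nabla A|^2 + \gamma_3\,|A|^2 - \gamma_1\,|A|^4 - \gamma_2,
\end{equation*}
with positive constants $\gamma_1, \gamma_2, \gamma_3$ chosen so that $(\partial_t - \Delta)g < 0$ at any interior spacetime point with $g \geq 0$, and so that $g(\cdot, 0) \leq 0$. Since $\gamma_3|A|^2 \geq 0$, a non-positive $g$ immediately gives $|\nabla A|^2 \leq \gamma_1|A|^4 + \gamma_2$. The constants $\gamma_1, \gamma_3$ will depend only on $n, m, \bar K$, while $\gamma_2$ absorbs both the low-curvature remainder terms and the initial value $\sup_{\mathcal{M}_0}(|\nabla A|^2 + \gamma_3|A|^2)$, which explains the $\mathcal{M}_0$-dependence. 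This mirrors the Huisken--Sinestrari gradient estimate \cite{HuSi09} adapted to the high-codimension spherical setting.

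The key calculations are the evolution inequalities
\begin{align*}
(\partial_t - \Delta)|\nabla A|^2 &\leq -2|\nabla^2 A|^2 + c_1|A|^2|\nabla A|^2 + c_2 \bar K|\nabla A|^2,\\
(\partial_t - \Delta)|A|^4 &\leq -4|A|^2|\nabla A|^2 + C_1|A|^6 + C_2 \bar K|A|^4,\\
(\partial_t - \Delta)|A|^2 &\leq -2|\nabla A|^2 + C_3|A|^4 + C_4 \bar K|A|^2,
\end{align*}
where the first is a Simons-type identity, with the $\bar K|\nabla A|^2$ contribution arising from commutators $[\nabla,\nabla]$ on the ambient sphere through the Gauss and Ricci equations \eqref{Riccieq}, and the second and third follow from \eqref{eqn_|A|^2} together with the standard tensorial bounds $|\langle A, A\rangle|^2 + |R^\perp|^2 \leq C|A|^4$ and $|H|^2 \leq n|A|^2$. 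Combining them,
\begin{equation*}
(\partial_t - \Delta)g \leq (c_1 - 4\gamma_1)|A|^2|\nabla A|^2 + (c_2\bar K - 2\gamma_3)|\nabla A|^2 - \gamma_1 C_1|A|^6 + \mathcal{R},
\end{equation*}
with $\mathcal{R}$ collecting lower-order terms in $|A|$ and $\bar K$. Choosing $\gamma_1 > (c_1+1)/4$ and $\gamma_3 > (c_2\bar K + 1)/2$ makes the two leading coefficients strictly negative.

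At a putative first interior crossing where $g = 0$, one splits into a high-curvature regime $|A|^2 \geq K_0 \bar K$, in which the $-\gamma_1 C_1|A|^6$ term strictly dominates the residual $\mathcal{R}$, and a bounded regime $|A|^2 < K_0 \bar K$, in which all of $\mathcal{R}$ is controlled by a constant depending only on $n,m,\bar K,\gamma_1,\gamma_3$; the lower bound $|\nabla A|^2 \geq \gamma_2 - \gamma_3 K_0 \bar K$ extracted from $g=0$ then forces $-\gamma_3|\nabla A|^2$ to dominate once $\gamma_2$ is chosen large. The main obstacle, absent in the Euclidean case \cite{HuSi09}, is precisely the $\bar K|\nabla A|^2$ term produced by the commutators on the ambient sphere: it has no natural sign, and the linear $\gamma_3|A|^2$ correction in $g$ is introduced for the sole purpose of generating the compensating $-2\gamma_3|\nabla A|^2$ via the evolution of $|A|^2$. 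The quartic pinching $|A|^2 \leq a(|H|^2) - \varepsilon\omega$ of Proposition~\ref{preservation} underwrites the whole scheme by keeping the tensorial constants $c_i, C_i$ universal and preventing any degenerate behaviour of the couplings between $|A|^2$, $|H|^2$ and $\bar K$ in $\mathcal{R}$.
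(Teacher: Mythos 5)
Your proposal differs from the paper's in structure: the paper controls the quotient $\tfrac{|\nabla A|^2}{g^2}$, where $g = a - |A|^2 - \varepsilon\omega$ is the pinching quantity, exploiting the favourable inequality $(\partial_t - \Delta)g \geq \tfrac{2(n+2)}{3n}|\nabla A|^2$; the quotient rule then produces the crucial good quadratic term $-\tfrac{2(n+2)}{3n}\tfrac{|\nabla A|^4}{g^2}$, and since pinching forces $g \gtrsim |A|^2$, this term dominates $c|A|^2\tfrac{|\nabla A|^2}{g^2}$ once $\tfrac{|\nabla A|^2}{g^2}$ is large. You instead propose the barrier function $g = |\nabla A|^2 + \gamma_3|A|^2 - \gamma_1|A|^4 - \gamma_2$.

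There is a sign error in your scheme that is not repairable within it. Since $g$ carries the term $-\gamma_1|A|^4$, bounding $(\partial_t - \Delta)g$ from above requires a \emph{lower} bound on $(\partial_t - \Delta)|A|^4$, not the upper bound you quote. Writing
\begin{align*}
(\partial_t - \Delta)|A|^4 &= 2|A|^2(\partial_t - \Delta)|A|^2 - 2\big|\nabla|A|^2\big|^2\\
&= -4|A|^2|\nabla A|^2 + 4|A|^2\big(|\langle A,A\rangle|^2 + |R^\perp|^2\big) + 8\bar K|A|^2|H|^2 - 4n\bar K|A|^4 - 2\big|\nabla|A|^2\big|^2,
\end{align*}
the only lower bound available is of the form $(\partial_t-\Delta)|A|^4 \geq -12|A|^2|\nabla A|^2 - 4n\bar K|A|^4$, so that $-\gamma_1(\partial_t-\Delta)|A|^4 \leq +12\gamma_1|A|^2|\nabla A|^2 + 4n\bar K\gamma_1|A|^4$. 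The coefficient of $|A|^2|\nabla A|^2$ in $(\partial_t-\Delta)g$ is therefore $c_1 + 12\gamma_1$ (not $c_1 - 4\gamma_1$), so increasing $\gamma_1$ \emph{worsens} the dangerous reaction term rather than neutralising it, and you also lose the good term $-\gamma_1 C_1|A|^6$ (those reaction terms enter with the wrong sign for your purposes). The linear correction $\gamma_3|A|^2$ does absorb the $c_2\bar K|\nabla A|^2$ term as you intend, but there is nothing in a polynomial ansatz of this type to cancel $c_1|A|^2|\nabla A|^2$; that cancellation requires a term quadratic in $\tfrac{|\nabla A|^2}{|A|^2}$, which only the quotient structure supplies. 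The $|A|^6$-versus-$\mathcal R$ case analysis at a putative first zero of $g$ is moot once the leading coefficient is positive.
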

\begin{proof}
We consider here the evolution equation for $\frac{|\nabla A|^2}{g^2},$ where $ g = a-|A|^2-\varepsilon\omega>0$. Since $ |A|^2-a < 0$ and $\mathcal{M}_0$ is compact, there exists an $ \eta(\mathcal{M}_0) >0, C_\eta(\mathcal{M}_0)>0$, so that
	\begin{align*}
	a(1-\eta)-|A|^2 \geq C_\eta>0.
	\end{align*}
Hence, we set $g=a-|A|^2\ge \eta a>\eta|A|^2>\e_1|A|^2+\e_2,$ where $ \e_1 < \eta$ and $\e_2>0$. From Proposition \ref{preservation}, we get
\begin{align*}
 \left(\partial_t-\Delta\right) g&=2\left(|\nabla A|^2-\left(\mathring{a}^\prime-\frac{\varepsilon}{n-2}+2|H|^2\mathring{a}^{\prime\prime}\right)|\nabla H|^2\right) -2 R_1+\frac{2}{n} R_2+2 n \bar{K} |A|^2\\
&+2\left(\mathring{a}^{\prime}-\frac{\varepsilon}{n-2}\right) \left(R_2+n \bar{K}|H|^2\right)\\
&\ge 2\left(|\nabla A|^2-\left(\mathring{a}^\prime-\frac{\varepsilon}{n-2}+2|H|^2\mathring{a}^{\prime\prime}\right)|\nabla H|^2\right).
%&=2\left(|\nabla A|^2-\left(\mathring{a}^\prime-\frac{\varepsilon}{n-2}+2|H|^2\mathring{a}^{\prime\prime}\right)|\nabla H|^2\right)+2\left(\mathring{a}^{\prime}-\frac{\varepsilon}{n-2}\right) \left(R_2+n \bar{K}|H|^2\right)\\
%&+2 n \bar{K} |A|^2+2\left(\frac{1}{n}\sum_{i,j}|\langle A_{ij},H\rangle|^2-\sum_{i,j,p,q}|\langle A_{ij},A_{pq}\rangle|^2-\sum_{i,j}|R_{ij}^\bot |^2\right).
	\end{align*}
Using Lemma \ref{katoinequality} and Lemma \ref{inequalities} (ii), we arrive at
	\begin{align*}
 \left(\partial_t-\Delta\right) g&\ge 2\left(|\nabla A|^2-\left(\frac{2(n-1)}{n(n+2)}-\frac{\varepsilon}{n-2}\right)|\nabla H|^2\right)\\
 &\ge 2\left(1-\frac{n+2}{3}\left(\frac{2(n-1)}{n(n+2)}-\frac{\varepsilon}{n-2}\right)\right)|\nabla A|^2\\
 &\ge\frac{2(n+2)}{3n}|\nabla A|^2,
	\end{align*}
where it remains positive for any $n$.
%	\begin{align*}
%	\partial_t g&= \Delta g+2\kappa_n \frac{n+2}{3}| \nabla A|^2,
%	\end{align*}
The evolution equation for $ |\nabla A|^2 $ is given by  	\begin{align*}
	\left(\partial_t-\Delta\right) |\nabla A|^2&\leq-2 |\nabla^2 A|^2+c |A|^2 |\nabla A|^2+d|\nabla A|^2.
	\end{align*}
If $w,z$ satisfy the evolution equations $\partial_tw = \Delta w+W$ and $\partial_tz = \Delta z+Z$, then we find
 	\begin{align*}
	\left(\partial_t-\Delta\right)\frac{w}{z}&=\frac{2}{z}\left\la \nabla \left( \frac{w}{z}\right) , \nabla z \right\ra+\frac{W}{z}-\frac{w}{z^2} Z=2\frac{\la \nabla w , \nabla z \ra}{z^2}-2 \frac{w|\nabla z |^2}{z^3}+\frac{W}{z}-\frac{w}{z^2} Z.
	\end{align*}
Furthermore, for any function $g$, we have by Kato's inequality
	\begin{align*}
	\la \nabla g , \nabla |\nabla A|^2 \ra&\leq 2 |\nabla g| |\nabla^2 A| |\nabla A| \leq \frac{1}{g}|\nabla g |^2 | \nabla A|^2+g |\nabla^2 A|^2.
	\end{align*}
We then get
	\begin{align*}
	-\frac{2}{g}| \nabla^2 A|^2+\frac{2}{g}\left\la \nabla g ,\nabla \left( \frac{|\nabla A|^2}{g}\right) \right\ra \leq-\frac{2}{g}| \nabla^2 A|^2-\frac{2}{g^3}|\nabla g|^2 |\nabla A|^2+\frac{2}{g^2}\la \nabla g ,\nabla |\nabla A|^2 \ra \leq 0.
	\end{align*}
Then, if we let $ w = |\nabla A|^2 $ and $ z = g$, with $W \leq-2 |\nabla^2 A|^2+c |A|^2 |\nabla A|^2+d|\nabla A|^2$ and $Z\geq \frac{2(n+2)}{3n}| \nabla A|^2 $, we get
	\begin{align*}
	\left(\partial_t-\Delta\right) \frac{|\nabla A|^2}{g}&\leq \frac{2}{g}\left\la \nabla g ,\nabla \left( \frac{|\nabla A|^2}{g}\right) \right\ra+\frac{1}{g}(-2 |\nabla^2 A|^2+c |A|^2 |\nabla A|^2 +d|\nabla A|^2)\\
	&-\frac{2(n+2)}{3n}\frac{|\nabla A|^4}{g^2} \\
	&\leq c |A|^2 \frac{|\nabla A|^2}{g}+d\frac{|\nabla A|^2}{g}-\frac{2(n+2)}{3n}\frac{|\nabla A|^4}{g^2}.
	\end{align*}
We repeat the above computation with $w = \frac{|\nabla A|^2}{g}, z = g,$
	\begin{align*}
	W\leq c |A|^2 \frac{|\nabla A|^2}{g}+d\frac{|\nabla A|^2}{g}-\frac{2(n+2)}{3n}\frac{|\nabla A|^4}{g^2}\end{align*}
and $ Z\geq 0$, to get
	\begin{align*}
	\left(\partial_t-\Delta\right)\frac{|\nabla A|^2}{g^2}&\leq \frac{2}{g}\left\la \nabla g ,\nabla \left( \frac{|\nabla A|^2}{g^2}\right) \right\ra+\frac{1}{g}\left( c |A|^2 \frac{|\nabla A|^2}{g}+d\frac{|\nabla A|^2}{g}-\frac{2(n+2)}{3n}\frac{|\nabla A|^4}{g^2}\right).
 	\end{align*}
The nonlinearity then is
	\begin{align*}
	\frac{|\nabla A|^2}{g^2} \left( c|A|^2+d-\frac{2(n+2)}{3n}\frac{|\nabla A|^2}{g} \right).
	\end{align*}
Since $g>\e_1|A|^2+\e_2$, there exists a constant $N$, such that
	\begin{align*}
	Ng\ge c|A|^2+d.
	\end{align*}
Hence, by the maximum principle, there exists a constant (with $\eta,\e_1,\e_2$ chosen sufficiently small so that N is sufficiently large, this estimate holds at the initial time), such that
	\begin{align*}
	\frac{|\nabla A|^2}{g^2}\leq \frac{3nN}{2(n+2)}.
	\end{align*}
Therefore, we see there exists a constant $\mathcal{C} = \frac{3nN}{2(n+2)}= \mathcal{C}(n, \mathcal{M}_0) $, such that
	\begin{align*}
	\frac{|\nabla A|^2}{g^2}\leq \mathcal{C}
	\end{align*}
and from the definition of $g$, we get the result of the lemma.
\end{proof}
%The gradient estimates also hold for the pinching condition involving \eqref{b(x)}, as follows.
%\begin{theorem}\label{thm_gradientb}
%Let $ \mathcal{M}_t , t \in [0,T)$ be a closed $n$-dimensional solution to the mean curvature flow in $\mathbb{S}^{n+m}$, such that
%	\begin{align*}
%	|A|^2 -b+\varepsilon\omega<0.
%	\end{align*}
% Then, there exists a constant $ \gamma_1= \gamma_1 (n, \mathcal M_0)$ and a constant $ \gamma_2 = \gamma_2 ( n , \mathcal{M}_0)$, such that the flow satisfies the uniform estimate
%	\begin{align*}
%	|\nabla A|^2 \leq \gamma_1 |A|^4+\gamma_2,
%	\end{align*}
% for every $t\in [0, T)$.
%\end{theorem}
%\begin{proof}
%The proof follows from Theorem \ref{thm_gradient}.
%\end{proof}
\begin{theorem} [\cite{HNAV}, Theorem 4.2] Let $\mathcal{M}_t,t\in[0,T)$ be a solution of the mean curvature flow. Then, there exist constants $\gamma_3, \gamma_4$ depending only on the dimension, so that	\begin{align*}%\label{eqn_HigherOrderGradEstimateB}
	|\nabla^2 A|^2 \leq \gamma_3|A|^6+\gamma_4,
	\end{align*}
for any $t\in[0,T)$.
\end{theorem}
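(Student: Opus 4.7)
The plan is to iterate the strategy of the preceding gradient estimate one order higher, feeding the just-proved bound $|\nabla A|^2\le\gamma_1|A|^4+\gamma_2$ into the evolution of $|\nabla^2 A|^2$ and then running the same Kato/quotient/maximum-principle argument on an auxiliary quantity built from $|\nabla^2 A|^2$ and the pinching gap $g=a-|A|^2-\varepsilon\omega$.

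Concretely, I would first derive the evolution equation for $|\nabla^2 A|^2$. Commuting $\nabla^2$ with $\partial_t-\Delta$ via the usual Simons-type identities in the sphere produces, after collecting terms,
\begin{align*}
(\partial_t-\Delta)|\nabla^2 A|^2\le -2|\nabla^3 A|^2+c_1|A|^2|\nabla^2 A|^2+c_2|A|\,|\nabla A|^2\,|\nabla^2 A|+C(\bar K)\bigl(|\nabla^2 A|^2+|\nabla A|^2|A|^2+|A|^4\bigr),
\end{align*}
where the last bracket gathers the terms originating from $\bar R$. Next, I would use the first gradient estimate to remove the middle cross term: since $g\ge\varepsilon_1|A|^2+\varepsilon_2$ we have both $|A|^2\le g/\varepsilon_1$ and $|\nabla A|^2\le\gamma_1|A|^4+\gamma_2\le Cg^2$, so Young's inequality gives
\begin{align*}
c_2|A|\,|\nabla A|^2\,|\nabla^2 A|\le \delta|\nabla^2 A|^2+\frac{C}{\delta}|A|^2|\nabla A|^4\le \delta|\nabla^2 A|^2+\frac{C}{\delta}g^{5}.
\end{align*}
After absorbing $\delta|\nabla^2 A|^2$ into the $c_1|A|^2|\nabla^2 A|^2$ term, the evolution takes the form $(\partial_t-\Delta)|\nabla^2 A|^2\le -|\nabla^3 A|^2+c|A|^2|\nabla^2 A|^2+Cg^{5}$.

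Now I would set $F=|\nabla^2 A|^2/g^3$ and reproduce exactly the two-step quotient computation from the proof of Theorem 4.1. Kato's inequality $|\nabla|\nabla^2 A||^2\le|\nabla^3 A|^2$ allows the cross-gradient term $\frac{2}{g}\langle\nabla g,\nabla(|\nabla^2 A|^2/g^k)\rangle$ to be absorbed by the good term $-|\nabla^3 A|^2/g^k$ at each iteration, and $(\partial_t-\Delta)g\ge\frac{2(n+2)}{3n}|\nabla A|^2\ge 0$ supplies the positive reaction needed to land on
\begin{align*}
(\partial_t-\Delta)F\le\frac{2}{g}\langle\nabla g,\nabla F\rangle+F\!\left(c|A|^2+d-\mu F\right)+\frac{Cg^{5}}{g^{3}}.
\end{align*}
The leftover $Cg^{2}$ from the $g^5$ term is harmless because it is bounded on the submanifold in terms of the initial data; alternatively one subtracts off a large multiple of $|\nabla A|^2/g^2$ (already bounded by the first theorem) to treat it as a controlled inhomogeneity. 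Since $g\ge\varepsilon_1|A|^2+\varepsilon_2$, one has $Ng\ge c|A|^2+d$ for $N=N(n,\mathcal M_0)$, and the structure is identical to the one handled by the final maximum-principle step in the previous theorem. This yields $F\le C(n,\mathcal M_0)$, i.e.\ $|\nabla^2 A|^2\le Cg^3$. Since $g\le a\le\frac{|H|^2}{n-2}+2\sqrt n\,\bar K\le C(|A|^2+1)$ by \eqref{ineqa}, we conclude $|\nabla^2 A|^2\le\gamma_3|A|^{6}+\gamma_4$.

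The main obstacle I anticipate is the management of the mixed term $|A||\nabla A|^2|\nabla^2 A|$: naive Young's splitting produces a term of order $|A|^{10}$, which is too large, so it is essential to convert $|\nabla A|^2$ into $g^2$ (rather than into $|A|^4$) before splitting — this is precisely where the strict pinching $|A|^2\le a-\varepsilon\omega$ and the resulting lower bound $g\ge\varepsilon_1|A|^2+\varepsilon_2$ are used in a nontrivial way. The ambient-curvature terms generated by $\bar K\ne 0$ are genuinely lower order and only affect the additive constant $\gamma_4$.
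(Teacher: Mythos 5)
Your proposal does not match the structure that makes the quotient/maximum-principle argument in Theorem~\ref{thm_gradient} close, and the step where you land on
$(\partial_t-\Delta)F\le\frac{2}{g}\langle\nabla g,\nabla F\rangle+F(c|A|^2+d-\mu F)+Cg^2$
with $F=|\nabla^2 A|^2/g^3$ is not correct. In the first-order proof, the crucial coincidence is that $(\partial_t-\Delta)g\ge\frac{2(n+2)}{3n}|\nabla A|^2$ produces, via the quotient rule $-\frac{w}{z^2}(\partial_t-\Delta)z$, a term $-\mu_0\frac{|\nabla A|^2}{g}\cdot\frac{|\nabla A|^2}{g^k}$, which is exactly $-\mu_0 g F_0^2$ because the quantity being estimated, $w=|\nabla A|^2$, is the same quantity appearing in the lower bound for $(\partial_t-\Delta)g$. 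For $w=|\nabla^2 A|^2$ this coincidence disappears: iterating the quotient three times with $z=g$, the negative term you get is $-3\mu_0\frac{|\nabla A|^2}{g}\,F$, \emph{not} $-\mu F^2$. The factor $\frac{|\nabla A|^2}{g}$ is unrelated to $F$ and can be arbitrarily small at a maximum of $F$, so there is no $-\mu F^2$ reaction to absorb the $(c|A|^2+d)F$ term and the maximum-principle argument does not close.

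The secondary issue is the treatment of the $Cg^5$ inhomogeneity. After dividing by $g^3$ this leaves $Cg^2$, which is \emph{not} bounded: from \eqref{ineqa}, $g\le a(|H|^2)\le\frac{|H|^2}{n-2}+2\sqrt{n}\bar K$ and the pinching makes $g$ comparable to $|A|^2+1$, so $Cg^2\sim|A|^4\to\infty$ in regions of high curvature. Your proposed fix of subtracting a large multiple of $|\nabla A|^2/g^2$ also fails, because the negative term its evolution supplies is $-\mu_0\frac{|\nabla A|^4}{g^3}\le Cg$, which is one full power of $g$ short of dominating $Cg^2$. (Even the sharper Young split, $c_2|A||\nabla A|^2|\nabla^2 A|\le\frac{c_2}{2}|A|^2|\nabla^2 A|^2+\frac{c_2}{2}|\nabla A|^4$, only improves the inhomogeneity to $Cg$, still unbounded.) To repair this one must either introduce a genuinely different auxiliary quantity whose evolution supplies a negative term with the correct homogeneity, or abandon the pointwise maximum-principle route for this order entirely and instead invoke interior parabolic regularity: after the gradient estimate, $|A|$ and $|\nabla A|$ are controlled on a parabolic neighbourhood of size $\sim|A|^{-1}$, and Schauder/bootstrap theory yields the $|\nabla^m A|$ bounds for all $m$ (this is the standard route in the codimension-one surgery literature, e.g.\ \cite{HuSi09}). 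As written, your argument has a genuine gap precisely at the step where the maximum principle is supposed to conclude.
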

Higher order estimates on $\left|\nabla^m A\right|$ for all $m$ follow by an analogous method. Furthermore, we derive estimates on the time derivative of the second fundamental form, since
	\begin{align*}
	\left|\partial_t A\right|=|\Delta A+A * A * A| \leq C|\nabla^2 A|+C|A|^3 \leq c_1|A|^3+c_2.
	\end{align*}
\section{Convergence for finite time}
In this section, we use a blow up argument to prove a codimension and a cylindrical estimate. In particular, we show that in regions of high curvature, the submanifold becomes approximately codimension one, quantitatively, and is weakly convex and moves by translation or is a self shrinker. 
\\
We need the evolution equation of the quantity $\frac{|A^-|^2}{f}$. Firstly, we separate the second fundamental form in the principal direction and the second fundamental form in the orthogonal direction and compute their evolution equations separately. Later, we find estimates for the reaction and gradient terms, as well as for the lower order terms. Since in the limit the background space is Euclidean, the result will follow from the maximum principle.
\\
Lastly, we give a full classification for singularity models of quartically pinched solutions in high codimension mean curvature flow.
\subsection{The evolution of $\frac{|A^-|^2}{f}$}
Here, we compute the evolution equation of the quantity $\frac{|A^-|^2}{f}$. 
%We denote $\bar{K}$ to be the sectional curvature and with respect to local orthonormal frames $\{e_i\}$ and $\{\nu_\alpha\}$ for the tangent and normal bundles.
 In order to prove the codimension estimate, we need good estimates for the reaction terms in this equation. These are proven following Andrews-Baker. From \eqref{eqn_A} and \eqref{eqn_H}, we have that the projection $\langle A, H\rangle $ satisfies
	\begin{align*}
	(\partial_t - \Delta) A_{ij}^{\alpha} H^{\alpha}&= -2\sum_{p,\alpha}\nabla_p A_{ij}^{\alpha} \nabla_p H^{\alpha} + 2 \sum_{p,q,\alpha,\beta}H^{\alpha} A_{ij}^{\beta}A_{pq}^{\beta} A_{pq}^{\alpha} \\
	&+\sum_{p,q,\alpha,\beta}H^{\alpha}(A_{iq}^{\beta}A_{qp}^{\beta} A_{pj}^{\alpha}+A_{jq}^{\beta}A_{qp}^{\beta} A_{pi}^{\alpha} - 2 A_{ip}^{\beta}A_{jq}^{\beta}A_{pq}^{\alpha})+ 2 \bar K |H|^2 g_{ij}.
	\end{align*}
The first of the reaction terms can be split into a hypersurface and a codimension component, as follows:
	\begin{align*}
	2 \sum_{p,q,\alpha,\beta}H^{\alpha} A_{ij}^{\beta}A_{pq}^{\beta} A_{pq}^{\alpha}&= 2 |H| |h|^2 h_{ij} + 2\sum_{p,q,\alpha\ge 2} |H| A_{ij}^{\alpha} A_{pq}^{\alpha} h_{pq}.
	\end{align*}
Similarly, the remaining reaction terms can be written as
	\begin{align*}
	\sum_{p,q,\alpha,\beta}H^{\alpha}(A_{iq}^{\beta}A_{qp}^{\beta} A_{pj}^{\alpha}+A_{jq}^{\beta}A_{qp}^{\beta} A_{pi}^{\alpha} - 2 A_{ip}^{\beta}A_{jq}^{\beta}A_{pq}^{\alpha})&= \sum_{p,q,\alpha\ge 2}|H| A_{iq}^{\alpha} A_{qp}^{\alpha} h_{pj}+\sum_{p,q,\alpha\ge 2} |H| A_{jq}^{\alpha} A_{qp}^{\alpha} h_{pi}\\
	& - 2\sum_{p,q,\alpha\ge 2}|H| A_{ip}^{\alpha} A_{jq}^{\alpha} h_{pq}.
	\end{align*}
Therefore,
	\begin{align*}
	(\partial_t - \Delta) A_{ij}^{\alpha} H^{\alpha}&= -2\sum_{p,\alpha}\nabla_p A_{ij}^{\alpha} \nabla_p H^{\alpha} +2|H| |h|^2 h_{ij} + 2\sum_{p,q,\alpha\ge 2} |H| h_{pq} ( A_{ij}^{\alpha} A_{pq}^{\alpha} - A_{ip}^{\alpha} A_{jq}^{\alpha})\\
	&+ \sum_{p,q,\alpha\ge 2}|H| A_{iq}^{\alpha} A_{qp}^{\alpha} h_{pj} +\sum_{p,q,\alpha\ge 2} |H| A_{jq}^{\alpha} A_{qp}^{\alpha} h_{pi} + 2 \bar K |H|^2 g_{ij}.
	\end{align*}
For a positive function $f$, we have
	\begin{align*}
	(\partial_t-\Delta) \sqrt{f}&=\frac{1}{4 f^{3/2} } |\nabla f|^2 + \frac{1}{2 \sqrt{f} } (\partial_t - \Delta) f,
	\end{align*}
hence the quantity $\sqrt{f} = |H|$ satisfies
	\begin{align*}
	(\partial_t - \Delta) |H|&=\frac{1}{4 |H|^3} |\nabla |H|^2|^2 + \frac{1}{2 |H|}(- 2 |\nabla H|^2 + 2 |\langle A, H\rangle |^2 + 2n\bar K |H|^2)\\
	&= \frac{1}{|H|^3} \langle H, \nabla_i H\rangle \langle H, \nabla_i H\rangle - \frac{|\nabla H|^2}{|H|} + \frac{|\langle A, H\rangle |^2 }{|H|} + n \bar K |H|.
	\end{align*}
Inserting the identities $\frac{|\langle A, H\rangle |^2 }{|H|} = |h|^2 |H|$
and
	\begin{align*}
- \frac{|\nabla H|^2}{|H|}+ \frac{1}{|H|^3}\langle H, \nabla_i H\rangle \langle H, \nabla_i H\rangle&= - \frac{|\nabla |H||^2}{|H|} - |H| |\nabla \nu_1|^2 + \frac{1}{|H|} \langle \nu_1, \nabla_i H\rangle \langle \nu_1, \nabla_i H\rangle\\
&= - |H| |\nabla \nu_1|^2,
	\end{align*}
we obtain
	\begin{align*}
(\partial_t - \Delta) |H|&= |h|^2 |H| + n \bar K |H| - |H||\nabla \nu_1|^2.
	\end{align*}
For a tensor $B_{ij}$ divided by a positive scalar function $f$, there holds
	\begin{align*}
	(\partial_t - \Delta) \frac{B_{ij}}{f} = \frac{1}{f} (\nabla_t - \Delta) B_{ij} - \frac{B_{ij}}{f^2} (\partial_t - \Delta) f + \frac{2}{f} \bigg\langle \nabla \frac{B_{ij}}{f}, \nabla f \bigg\rangle.
	\end{align*}
Therefore, dividing $\langle A_{ij}, H\rangle$ by $|H|$, we obtain
	\begin{align*}
	(\partial_t - \Delta) h_{ij}&= |h|^2 h_{ij} + 2\sum_{p,q,\alpha\ge 2} h_{pq} ( A_{ij}^{\alpha} A_{pq}^{\alpha} - A_{ip}^{\alpha} A_{jq}^{\alpha}) +\sum_{p,q,\alpha\ge 2}A_{iq}^{\alpha} A_{qp}^{\alpha} h_{pj} +2 |H|^{-1} \langle \nabla h_{ij}, \nabla |H| \rangle\\
	&+ \sum_{p,q,\alpha\ge 2}A_{jq}^{\alpha} A_{qp}^{\alpha} h_{pi}+ 2 \bar K |H| g_{ij} - n \bar K h_{ij} -2|H|^{-1} \langle \nabla A_{ij} ,\nabla H \rangle + h_{ij} |\nabla \nu_1|^2 .
	\end{align*}
We simplify the gradient terms by decomposing
	\begin{align*}
	- 2\langle \nabla A_{ij} , \nabla H\rangle&= - 2\langle \nabla h_{ij} \nu_1 + h_{ij} \nabla \nu_1 + \nabla A_{ij}^- , \nabla |H| \nu_1 + 2|H| \nabla \nu_1 \rangle\\
	&= - 2\langle \nabla h_{ij} , \nabla |H|\rangle - 2|H| h_{ij} |\nabla \nu_1|^2 - 2\langle \nabla A_{ij}^- , \nabla |H| \nu_1 \rangle- 2|H| \langle \nabla A_{ij}^- , \nabla \nu_1 \rangle
	\end{align*}
and so obtain
	\begin{align*}
	(\partial_t - \Delta) h_{ij}&= |h|^2 h_{ij} + 2\sum_{p,q,\alpha\ge 2} h_{pq} ( A_{ij}^{\alpha} A_{pq}^{\alpha} - A_{ip}^{\alpha} A_{jq}^{\alpha}) +\sum_{p,q,\alpha\ge 2} A_{iq}^{\alpha} A_{qp}^{\alpha} h_{pj} - 2 \langle \nabla A_{ij}^- , \nabla \nu_1 \rangle\\
	& +\sum_{p,q,\alpha\ge 2} A_{jq}^{\alpha} A_{qp}^{\alpha} h_{pi} + 2 \bar K |H| g_{ij} - n \bar K h_{ij} - h_{ij} |\nabla \nu_1|^2 - 2|H|^{-1}\langle \nabla A_{ij}^- , \nabla |H| \nu_1 \rangle.
	\end{align*}
Next, we compute
	\begin{align*}
	(\partial_t - \Delta) |h|^2&=2\sum_{i,j} h_{ij}(\nabla_t - \Delta) h_{ij} - 2 |\nabla h|^2 \\
	&= 2|h|^4 + 4\sum_{i,j} |h_{ij}A_{ij}^- |^2 - 4\sum_{i,j,p,q,\alpha\ge 2} h_{ij} h_{pq}A_{ip}^{\alpha} A_{jq}^{\alpha} + 4\sum_{i,j,p,q,\alpha\ge 2} h_{ij} h_{pj} A_{iq}^{\alpha} A_{qp}^{\alpha}\\
	& + 4 \bar K |H|^2- 2n \bar K |h|^2- 2|\nabla h|^2 - 2 |h|^2 |\nabla \nu_1|^2 - 4\sum_{i,j}|H|^{-1} h_{ij} \langle \nabla A_{ij}^- , \nabla |H| \nu_1 \rangle \\
	&- 4\sum_{i,j}h_{ij} \langle \nabla A_{ij}^- , \nabla \nu_1 \rangle,
	\end{align*}
and following Naff in \cite{Naff}, rewrite
	\begin{align*}
	4 \sum_{i,j,p,q,\alpha\ge 2}h_{ij} h_{pj} A_{iq}^{\alpha} A_{qp}^{\alpha} - 4\sum_{i,j,p,q,\alpha\ge 2} h_{ij} h_{pq}A_{ip}^{\alpha} A_{jq}^{\alpha}&= 2\sum_{i,j,p,q}\langle h_{ij} A_{iq}^- - h_{iq}A_{ij}^- , h_{pj} A_{pq}^- - h_{pq} A_{pj}^- \rangle\\
	&= 2\sum_{i,j,p} |h_{ip} A_{pj}^- - h_{jp}A_{pi}^- |^2.
	\end{align*}
Hence,
	\begin{align*}
	(\partial_t - \Delta) |h|^2&= 2|h|^4 + 4\sum_{i,j} |h_{ij}A_{ij}^- |^2+ 2\sum_{i,j,p} |h_{ip} A_{pj}^- - h_{jp}A_{pi}^- |^2 + 4 \bar K |H|^2 - 2n \bar K |h|^2\\
	&- 2|\nabla h|^2 - 2 |h|^2 |\nabla \nu_1|^2 - 4\sum_{i,j}|H|^{-1} h_{ij} \langle \nabla A_{ij}^- , \nabla |H| \nu_1 \rangle - 4\sum_{i,j}h_{ij} \langle \nabla A_{ij}^- , \nabla \nu_1 \rangle
	\end{align*}
and since $|A^-|^2 = |A|^2 - |h|^2$,
	\begin{align*}
	(\partial_t- \Delta) |A^-|^2 &= 2 | \langle A, A\rangle |^2 -2|h|^4 - 4\sum_{i,j} |h_{ij}A_{ij}^- |^2+ 2 |R^{\perp}|^2 -2\sum_{i,j,p} |h_{ip} A_{pj}^- - h_{jp}A_{pi}^- |^2 \\
	&- 2n \bar K |A^-|^2- 2 |\nabla A|^2+ 2|\nabla h|^2 + 2 |h|^2 |\nabla \nu_1|^2 \\
	&+ 4\sum_{i,j}|H|^{-1} h_{ij} \langle \nabla A_{ij}^- , \nabla |H| \nu_1 \rangle+ 4\sum_{i,j}h_{ij} \langle \nabla A_{ij}^- , \nabla \nu_1 \rangle.
	\end{align*}
The reaction terms can be simplified by observing
\[2 | \langle A, A\rangle |^2 -2|h|^4 - 4\sum_{i,j} |h_{ij}A_{ij}^- |^2 = 2|\langle A^-, A^-\rangle |^2\]
and (recalling the decomposition of $R^\perp$ carried out above)
\[2 |R^{\perp}|^2 -2\sum_{i,j,p} |h_{ip} A_{pj}^- - h_{jp}A_{pi}^- |^2= 2 \sum_{i,j,p}|h_{ip} A_{pj}^- - h_{jp} A_{pi}^- |^2 + 2\sum_{i,j,p}|A_{ip}^- \otimes A_{pj}^- - A_{jp}^- \otimes A_{pi}^- |^2,\]
hence
	\begin{align*}
	(\partial_t - \Delta) |A^-|^2&= 2|\langle A^-, A^-\rangle |^2 + 2\sum_{i,j,p} |h_{ip} A_{pj}^- - h_{jp} A_{pi}^- |^2 + 2\sum_{i,j,p}|A_{ip}^- \otimes A_{pj}^- - A_{jp}^- \otimes A_{pi}^- |^2\\
	& - 2n \bar K |A^-|^2- 2 |\nabla A|^2+ 2|\nabla h|^2 + 2 |h|^2 |\nabla \nu_1|^2 \\
	&+ 4\sum_{i,j}|H|^{-1} h_{ij} \langle \nabla A_{ij}^- , \nabla |H| \nu_1 \rangle+ 4\sum_{i,j}h_{ij} \langle \nabla A_{ij}^- , \nabla \nu_1 \rangle.
	\end{align*}
Since $\nabla A = \nabla h \nu_1 + h \nabla \nu_1 + \nabla A^-$, we compute
	\begin{align*}
	 2 |\nabla A|^2 = 2 |\nabla h|^2 + 2 |h|^2 |\nabla \nu_1|^2 + 2 |\nabla A^-|^2 + 4\sum_{i,j} h_{ij} \langle \nabla A_{ij}^- ,\nabla \nu_1\rangle + 4\sum_{i,j} \langle \nabla A_{ij}^- , \nabla h_{ij}\nu_1\rangle 
	\end{align*}
and so obtain
	\begin{align*}
	(\partial_t - \Delta) |A^-|^2&= 2|\langle A^-, A^-\rangle |^2 + 2 \sum_{i,j,p}|h_{ip} A_{pj}^- - h_{jp} A_{pi}^- |^2 + 2\sum_{i,j,p}|A_{ip}^- \otimes A_{pj}^- - A_{jp}^- \otimes A_{pi}^- |^2\\
	& - 2n \bar K |A^-|^2- 2 |\nabla A^-|^2- 4\sum_{i,j} \langle \nabla A_{ij}^- , \nabla h_{ij}\nu_1\rangle+ 4\sum_{i,j}|H|^{-1} h_{ij} \langle \nabla A_{ij}^- , \nabla |H| \nu_1 \rangle.
	\end{align*}
Differentiating $\langle A_{ij}^- , \nu_1\rangle = 0$, we see the last two gradient terms may be expressed as
	\begin{align*}
	&- 4 \sum_{i,j}\langle \nabla A_{ij}^- ,\nabla h_{ij}\nu_1\rangle+ 4\sum_{i,j}|H|^{-1} h_{ij} \langle \nabla A_{ij}^- , \nabla |H| \nu_1 \rangle\\
	&= -\sum_{i,j,k} (4 \nabla_k h_{ij} - 4|H|^{-1} h_{ij} \nabla_k |H|)\langle \nabla_k A_{ij}^- , \nu_1 \rangle \\
	&= \sum_{i,j,k}(4 \nabla_k h_{ij} - 4|H|^{-1} h_{ij} \nabla_k |H|)\langle A_{ij}^- , \nabla_k \nu_1 \rangle
	\end{align*}
and consequently,
	\begin{align*}
	(\partial_t - \Delta) |A^-|^2&= 2\sum_{i,j,p,q}|\langle A_{ij}^- , A_{pq}^- \rangle |^2 + 2\sum_{i,j,p} |h_{ip} A_{pj}^- - h_{jp} A_{pi}^- |^2 + 2\sum_{i,j,p}|A_{ip}^- \otimes A_{pj}^- - A_{jp}^- \otimes A_{pi}^- |^2 \\
	&- 2n \bar K |A^-|^2- 2 |\nabla A^-|^2+\sum_{i,j,k} (4 \nabla_k h_{ij} - 4|H|^{-1} h_{ij} \nabla_k |H|)\langle A_{ij}^- , \nabla_k \nu_1 \rangle.
	\end{align*}
%Since $f=\sqrt{\left(\frac{|H|^2}{n-2}+4 \bar{K}\right)^2+(4n-16) \bar{K}^2}-|A|^2$
%	\begin{align*}
%& \left(\partial_t-\Delta\right) f=-2|\nabla \mathring{A}|^2+2\left(\mathring{a}^{\prime}-\frac{\varepsilon}{n-2}\right)|\nabla H|^2+\mathring{a}^{\prime \prime}|\nabla| H|^2|^2 \\
%& +2 R_1-\frac{2}{n} R_2-2 n \bar{K} |\mathring{A}|^2-2\left(\mathring{a}^{\prime}-\frac{\varepsilon}{n-2}\right) \left(R_2+n \bar{K}|H|^2\right),
%	\end{align*}
Since $0\le f=-|A|^2+a-\varepsilon \omega$, from Proposition \ref{preservation} and sending $\varepsilon\to 0$, we have
\begin{align*}
 \left(\partial_t-\Delta\right) f&=2|\nabla^\bot A|^2-2\mathring{a}^{\prime}|\nabla H|^2-\mathring{a}^{\prime \prime}|\nabla| H|^2|^2  -2 R_1+\frac{2}{n} R_2+2 n \bar{K} |A|^2+2\mathring{a}^{\prime} \left(R_2+n \bar{K}|H|^2\right)\\
&>2\left(|\nabla A|^2-\left(\mathring{a}^\prime+2|H|^2\mathring{a}^{\prime\prime}\right)|\nabla H|^2\right)-2R_1+2\left(\frac{1}{n}+\mathring{a}^\prime\right)R_2+2n\bar{K}|A|^2\\
&\ge2\left(|\nabla A|^2-\frac{2(n-1)}{n(n+2)}|\nabla H|^2\right)\\
&+2\left(\left(\frac{1}{n}+\mathring{a}^\prime\right)\sum_{i,j}|\langle A_{ij},H\rangle|^2-\sum_{i,j,p,q}|\langle A_{ij},A_{pq}\rangle|^2-\sum_{i,j}|R_{ij}^\bot |^2\right)
%&\ge2\left(|\nabla A|^2-\left(\frac{2(n-1)}{n(n+2)}-\frac{\varepsilon}{n-2}\right)|\nabla H|^2\right)\\
%&+2\left(\mathring{a}^{\prime}-\frac{\varepsilon}{n-2}\right) \left(|A|^2|H|^2-P_2|H|^2+n \bar{K}|H|^2\right)\\
%&+2 n \bar{K} |A|^2-2| \mathring{A}|^4-\frac{2}{n}|\mathring{A}|^2|H|^2-4 P_2|\mathring{A}|^2+3P_2^2+\frac{2}{n} P_2|H|^2.
%&\ge2\left(|\nabla A|^2-\frac{2(n-1)}{n(n+2)}|\nabla H|^2\right)+\left(2\mathring{a}^{\prime}-\frac{2}{n}\right)|\mathring{A}|^2|H|^2+\left(\frac{2}{n}-2\mathring{a}^{\prime}\right)P_2|H|^2\\
%&+2 n \bar{K} |A|^2+2\mathring{a}^\prime n\bar{K}|H|^2-2|\mathring{A}|^4-4P_2|\mathring{A}|^2.
	\end{align*}
%	\begin{align*}
%	\left(\partial_t-\Delta\right)f=2(|\nabla^\bot A|^2-c_n|\nabla^\bot H|^2)+2\left(c_n\sum_{i,j}|\langle A_{ij},H\rangle|^2-\sum_{i,j,p,q}|\langle A_{ij},A_{pq}\rangle|^2-\sum_{i,j}|R_{ij}^\bot |^2\right),
%	\end{align*}
and so, from Lemma \ref{inequalities} (ii) and Kato inequality \eqref{katoinequality}, we have
	\begin{align}\label{evolutionequation}
	&\left(\partial_t-\Delta\right)\frac{|A^-|^2}{f}=\frac{1}{f}\left(\partial_t-\Delta\right)|A^-|^2-|A^-|^2\frac{1}{f^2}\left(\partial_t-\Delta\right)f+2\left\langle\nabla\frac{|A^-|^2}{f},\nabla \log f\right\rangle\nonumber\\
	&<\frac{2}{f}\left(\sum_{i,j,p,q}|\langle A_{ij}^- ,A_{pq}^- \rangle|^2+\sum_{i,j,p}|h_{ip} A_{pj}^- -h_{jp}A_{ip}^- |^2+\sum_{i,j,p} |A_{ip}^- \otimes A_{jp}^- -A_{jp}^- \otimes A_{ip}^- |^2-n\bar{K}|A^-|^2\right)\nonumber\\
	&+\frac{2}{f}\left(-|\nabla^\bot A^-|^2+2\sum_{i,j,k}(\nabla_k h_{ij}-|H|^{-1}h_{ij} \nabla_k |H|)\langle A_{ij}^- ,\nabla^\bot_k \nu_1\rangle\right)\nonumber\\
%	&-|A^-|^2\frac{2}{f^2}\left(|\nabla A|^2-\left(\frac{2(n+1)}{n(n+2)}-\frac{\varepsilon}{n-2}\right)|\nabla H|^2\right)\\
	&-|A^-|^2\frac{2}{f^2}\left(\left(\frac{1}{n}+\mathring{a}^\prime\right)\sum_{i,j}|\langle A_{ij},H\rangle|^2-\sum_{i,j,p,q}|\langle A_{ij},A_{pq}\rangle|^2-\sum_{i,j}|R_{ij}^\bot |^2\right)\nonumber\\
	&-|A^-|^2\frac{2}{f^2}\left(|\nabla A|^2-\frac{2(n-1)}{n(n+2)}|\nabla H|^2\right)+2\left\langle\nabla\frac{|A^-|^2}{f},\nabla\log f\right\rangle.
	\end{align}
\subsection{Codimension estimate}
The goal for this subsection is to prove Theorem \ref{blowuptheorem}. Using a series of lemmas, we derive a suitable estimate of the evolution equation of $\frac{|A^-|^2}{f}$ that was computed above, which is needed for the codimension estimate at the end of this subsection. Specifically, we prove the following.
	\begin{align}\label{initialclaim}
	\left(\partial_t-\Delta\right)\frac{|A^-|^2}{f}&<2\left\langle\nabla\frac{|A^-|^2}{f},\nabla\log f\right\rangle-\delta\frac{|A^-|^2}{f^2}\left(\partial_t-\Delta\right)f+C\frac{|A^-|^2}{f}+C',
	\end{align}
where $C,C'$ depend on the background curvature $\bar{R}$, the dimension of the submanifold $n$, $\bar{K}$ and $c_n$, where $c_n=\frac{1}{n-2}$.

We begin by estimating the reaction terms. In the following lemma, the first estimate is proven in \cite{AnBa10}, Section 3 and the second estimate is a matrix inequality, which is Lemma 3.3 in \cite{Li1992}. 
\begin{lemma}\label{4.1}
	\begin{align}\label{eq4.5}
	\sum_{i, j}\big|\mathring{h}_{ij} A^-_{ij}\big|^2+\sum_{i,j}|R_{ij}^{\perp}(\nu_1)|^2 &\leq 2|\mathring{h}|^2|{A^-}|^2+\sum_{i,j}|\bar{R}_{ij}(\nu_1)|^2+4|\bar{R}_{ij}(\nu_1)||\mathring{h}||A^-|,\\
\label{eq4.6}
	\sum_{i,j,p,q}|\langle A^-_{ij}, A^-_{pq}\rangle|^2+|\hat{R}^\perp|^2 &\leq\frac{3}{2}|A^-|^4+\sum_{\alpha, \beta\ge 2}\left(\sum_{i,j}|\bar{R}_{ij\alpha\beta}|^2+4|\bar{R}_{ij\alpha\beta}||A^-|^2\right).
	\end{align}
\end{lemma}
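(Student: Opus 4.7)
The plan is to prove both bounds by combining the Ricci equation \eqref{Riccieq} with matrix inequalities of Li--Li type for symmetric matrices. For both parts, the strategy is the same: decompose $R^\perp$ into a background piece and an intrinsic commutator piece, expand the resulting square via $|a+b|^2 \le |a|^2+2|a||b|+|b|^2$, control the intrinsic contribution by a matrix inequality for symmetric matrices, and leave the background and cross terms as written.

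For \eqref{eq4.5}, I would specialize the Ricci equation to $\alpha = 1$, $\beta \ge 2$. Since $A^1 = h\nu_1$ and $h = \mathring{h} + \frac{|H|}{n} g$, the trace part of $h$ cancels in the skew-symmetric sum $A^\alpha_{ip} A^\beta_{jp} - A^\beta_{ip} A^\alpha_{jp}$, so that
\[
R^\perp_{ij\nu_1 \beta} = \bar{R}_{ij\nu_1 \beta} + \sum_p \bigl(\mathring{h}_{ip} A^\beta_{pj} - A^\beta_{ip} \mathring{h}_{pj}\bigr).
\]
Squaring and summing in $i,j$ and $\beta\ge 2$, the cross term produces precisely $4|\bar R_{ij}(\nu_1)||\mathring h||A^-|$ after Cauchy--Schwarz. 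The intrinsic contribution must then be combined with $\sum_{i,j}|\mathring h_{ij} A^-_{ij}|^2$ and estimated by the matrix inequality
\[
\sum_{i,j}M_{ij}^2 N_{ij}^2 + \sum_{i,j}\bigl|[M,N]_{ij}\bigr|^2 \le 2|M|^2|N|^2,
\]
valid for any pair of symmetric matrices $M,N$ (this is the content of \cite{AnBa10}, Section 3). Applying this with $M = \mathring h$, $N = A^\beta$ and summing over $\beta \ge 2$ collapses the intrinsic portion to $2|\mathring h|^2|A^-|^2$.

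For \eqref{eq4.6}, the argument proceeds identically with both $\alpha,\beta \ge 2$. The Ricci equation gives
\[
R^\perp_{ij\alpha\beta} = \bar{R}_{ij\alpha\beta} + \sum_p\bigl(A^\alpha_{ip}A^\beta_{pj}-A^\beta_{ip}A^\alpha_{pj}\bigr),
\]
and squaring produces $|\bar R_{ij\alpha\beta}|^2$, a cross term bounded via $2|\bar R||[A^\alpha,A^\beta]| \le |\bar R|\cdot C|A^-|^2$ (using $|[A^\alpha,A^\beta]| \le \sqrt 2 |A^\alpha||A^\beta|$), and the intrinsic commutator. Using
\[
\sum_{i,j,p,q}|\langle A^-_{ij},A^-_{pq}\rangle|^2 = \sum_{\alpha,\beta\ge 2}\bigl(\operatorname{tr}(A^\alpha A^\beta)\bigr)^2,
\]
the intrinsic piece to bound becomes $\sum_{\alpha,\beta \ge 2}\bigl[(\operatorname{tr}(A^\alpha A^\beta))^2 + |[A^\alpha,A^\beta]|^2\bigr]$, which is precisely the combination controlled by the Li--Li matrix inequality (Lemma 3.3 in \cite{Li1992}):
\[
\sum_{\alpha,\beta}\bigl(\operatorname{tr}(A^\alpha A^\beta)\bigr)^2 + \sum_{\alpha,\beta,i,j}\bigl|[A^\alpha,A^\beta]_{ij}\bigr|^2 \le \tfrac{3}{2}|A|^4.
\]
Restricting the sum to $\alpha,\beta \ge 2$ yields the desired $\tfrac32|A^-|^4$.

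The main obstacle is the Li--Li matrix inequality itself: its proof requires simultaneous diagonalization arguments and a sharp analysis of traces of products of symmetric matrices, which is the most delicate ingredient. The rest of the computation is routine once the Ricci equation decomposition is set up and the trace-part cancellation in the $\alpha = 1$ commutator is observed. Since both matrix inequalities are already available from \cite{AnBa10} and \cite{Li1992}, the proof reduces to the algebraic bookkeeping outlined above.
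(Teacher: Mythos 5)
Your plan is essentially the paper's argument, modulo one mis-stated inequality. For \eqref{eq4.6} you and the paper do exactly the same thing: rewrite $\sum_{i,j,p,q}|\langle A^-_{ij},A^-_{pq}\rangle|^2 = \sum_{\alpha,\beta\ge2} (\operatorname{tr}(A_\alpha A_\beta))^2$, decompose $\hat R^\perp$ via the Ricci equation into a commutator plus a background piece, bound the cross term by Cauchy--Schwarz, and invoke Lemma 3.3 of \cite{Li1992} for $\sum_{\alpha,\beta}(\operatorname{tr}(A_\alpha A_\beta))^2 + \sum_{\alpha,\beta}|[A_\alpha,A_\beta]|^2 \le \tfrac32 S^2$. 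The observation that the trace part of $h$ drops out of the $\alpha=1$ commutator is also correct and matches the paper.

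For \eqref{eq4.5} the paper works directly in an eigenbasis of $\mathring h$ with eigenvalues $\lambda_i$: it bounds $\sum_\beta (\sum_i \lambda_i A^\beta_{ii})^2 \le |\mathring h|^2\sum_\beta\sum_i (A^\beta_{ii})^2$ by Cauchy--Schwarz, bounds the commutator term by $(\lambda_i-\lambda_j)^2 \le 2|\mathring h|^2$, and adds. You instead package this as an abstract matrix inequality from \cite{AnBa10}, which is a legitimate alternative, \emph{but you have stated it incorrectly}: you wrote $\sum_{i,j} M_{ij}^2 N_{ij}^2 + \sum_{i,j}|[M,N]_{ij}|^2 \le 2|M|^2|N|^2$, whereas the first term of \eqref{eq4.5} is $\sum_{\beta\ge2}\big(\operatorname{tr}(\mathring h\, A^\beta)\big)^2$, i.e.\ the square of the full contraction $\sum_{i,j}M_{ij}N_{ij}$, not the contraction of the pointwise squares. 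The inequality you need is
\begin{align*}
\big(\operatorname{tr}(MN)\big)^2 + |[M,N]|^2 \le 2|M|^2|N|^2,
\end{align*}
valid for any pair of symmetric matrices; this is exactly what the paper's eigenbasis computation establishes (Cauchy--Schwarz on the diagonal plus $(\lambda_i-\lambda_j)^2\le 2|M|^2$ on the off-diagonal). As written, your version has a different left-hand side than \eqref{eq4.5}, so it does not apply directly. Replace $\sum M_{ij}^2 N_{ij}^2$ by $(\operatorname{tr}(MN))^2$ and the rest of your outline (Ricci equation decomposition, trace-part cancellation, Cauchy--Schwarz on the cross terms) goes through exactly as in the paper.
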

\begin{proof} 
%The arguments given in \cite{AnBa10} to prove inequality \eqref{eq4.5} are simple, so we will repeat them in our notation here. We will express inequality \eqref{eq4.6} so that it is an immediate consequence of Lemma 3.3 in \cite{Li1992}.
Fix any point $p \in \mathcal{M}$ and time $t \in[0, T)$. Let $e_1, \ldots, e_n$ be an orthonormal basis which identifies $T_p \mathcal{M} \cong \mathbb{R}^n$ at time $t$ and then choose $\nu_2, \ldots, \nu_{m}$ to be a basis of the orthogonal complement of principal normal $\nu_1$ in $N_p \mathcal{M}$ at time $t$. For each $\beta \in\{2, \ldots, m\}$, define a matrix $A_{\beta}=\left\langle A, \nu_{\beta}\right\rangle$, whose components are given by $(A_{\beta})_{ij}=A^\beta_{ij}$. 

Then $A^-=\sum_{\beta\ge 2} A_{\beta} \nu_{\beta}$ and $\mathring{h}=\langle\mathring{A}, \nu_1\rangle$.
To prove \eqref{eq4.5}, let $\lambda_1, \ldots, \lambda_n$ denote the eigenvalues of $\mathring{h}$. Assume the orthonormal basis is an eigenbasis of $\mathring{h}$. We have
	\begin{align*}
	\sum_{i, j}| \mathring{h}_{i j} A^-_{i j}|^2=\sum_{\beta\ge 2} \sum_{i, j,p,q} \mathring{h}_{i j} \mathring{h}_{pq} A^\beta_{i j } A^\beta_{pq} =\sum_{\beta\ge 2}\left(\sum_{i, j} \mathring{h}_{i j} A^\beta_{i j }\right)^2 =\sum_{\beta\ge 2}\left(\sum_{i} \lambda_i A^\beta_{i i }\right)^2.
	\end{align*}
By Cauchy-Schwarz,
	\begin{align}\label{eq4.7}
	\sum_{i, j}|\mathring{h}_{i j} A^-_{i j}|^2 \leq \sum_{\beta\ge 2}\left(\sum_{i} \lambda_j^2\right)\left(\sum_{i} (A^\beta_{i i})^2\right)=|\mathring{h}|^2 \sum_{\beta\ge 2} \sum_{i}(A^\beta_{i i})^2.
	\end{align}
Using
	\begin{align*}%\label{eq2.40}
	\sum_{i,j}|R^\bot_{ij}(\nu_1)|^2=\sum_{i,j}|\bar{R}_{ij}(\nu_1)|^2+\sum_{i,j,k}|\mathring{h}_{ik}A^-_{jk}-\mathring{h}_{jk}A^-_{ik}|^2+2\sum_{i,j,p}\langle \bar{R}_{ij}(\nu_1),\mathring{h}_{ip}A^-_{jp}-\mathring{h}_{jp}A^-_{ip}\rangle
	\end{align*}
and \eqref{Berger} we have
	\begin{align*}
	\sum_{i,j}|R_{i j}^{\perp}(\nu_1)|^2&=\sum_{\beta\ge 2} \sum_{i, j,k}\left(\mathring{h}_{i k} A^\beta_{j k }-\mathring{h}_{j k} A^\beta_{ik}\right)^2+\sum_{i,j}|\bar{R}_{ij}(\nu_1)|^2+2\sum_{i,j,p}\langle \bar{R}_{ij}(\nu_1),\mathring{h}_{ip}A^-_{jp}-\mathring{h}_{jp}A^-_{ip}\rangle \\
	&=\sum_{\beta\ge 2} \sum_{i, j}\left(\lambda_i-\lambda_j\right)^2 (A^\beta_{i j})^2 +\sum_{i,j}|\bar{R}_{ij}(\nu_1)|^2+2\sum_{i,j,p}\langle \bar{R}_{ij}(\nu_1),\mathring{h}_{ip}A^-_{jp}-\mathring{h}_{jp}A^-_{ip}\rangle \\
	&=\sum_{\beta\ge 2} \sum_{i \neq j}\left(\lambda_i-\lambda_j\right)^2 (A^\beta_{i j })^2+\sum_{i,j}|\bar{R}_{ij}(\nu_1)|^2+2\sum_{i,j,p}\langle \bar{R}_{ij}(\nu_1),\mathring{h}_{ip}A^-_{jp}-\mathring{h}_{jp}A^-_{ip}\rangle.
	\end{align*}
Since $\left(\lambda_i-\lambda_j\right)^2 \leq 2\left(\lambda_i^2+\lambda_j^2\right) \leq 2|\mathring{h}|^2$, we have
	\begin{align}\label{eq4.8}
	\sum_{i,j}|R_{i j}^{\perp}(\nu_1)|^2 \leq 2|\mathring{h}|^2 \sum_{\beta\ge 2} \sum_{i \neq j} (A^\beta_{i j})^2+\sum_{i,j}|\bar{R}_{ij}(\nu_1)|^2+4|\bar{R}_{ij}(\nu_1)||\mathring{h}||A^-|.
	\end{align}
Summing \eqref{eq4.7} and \eqref{eq4.8}, we obtain
	\begin{align*}
	\sum_{i, j} |\mathring{h}_{i j} A^-_{i j}|^2+\sum_{i,j}|R_{i j}^{\perp}(\nu_1)|^2&\leq|\mathring{h}|^2 \sum_{\beta\ge 2} \sum_{i}(A^\beta_{i i})^2+2|\mathring{h}|^2 \sum_{\beta\ge 2} \sum_{i \neq j} (A^\beta_{i j})^2+\sum_{i,j}|\bar{R}_{ij}(\nu_1)|^2\\
	&+4|\bar{R}_{ij}(\nu_1)||\mathring{h}||A^-|\\
	&\leq 2|\mathring{h}|^2|A^-|^2+\sum_{i,j}|\bar{R}_{ij}(\nu_1)|^2+4|\bar{R}_{ij}(\nu_1)||\mathring{h}||A^-|,
	\end{align*}
which is \eqref{eq4.5}. For $\alpha, \beta \in\{2, \ldots, m\}$, we define
	\begin{align*}
	S_{\alpha \beta}:=\operatorname{tr}\left(A_{\alpha} A_{\beta}\right)=\sum_{i, j,\alpha} A^\alpha_{i j } A^\beta_{i j} \quad \text{ and } \quad S_{\alpha}:=\left|A_{\alpha}\right|^2=\sum_{i, j,\alpha} A^\alpha_{i j} A^\alpha_{i j}.
	\end{align*}
By letting $S:=S_2+\cdots+S_m=|A^-|^2$, we have
	\begin{align*}
	\sum_{i,j,p,q}|\langle A^-_{i j}, A^-_{pq}\rangle|^2&=\sum_{i, j, p,q} \sum_{\alpha, \beta\ge 2}A^\alpha_{i j} A^\alpha_{pq} A^\beta_{i j } A^\beta_{pq} =\sum_{\alpha, \beta\ge 2}\left(\sum_{i, j} A^\alpha_{i j} A^\beta_{i j}\right)\left(\sum_{p,q} A^\alpha_{pq } A^\beta_{pq}\right) =\sum_{\alpha, \beta\ge 2} S_{\alpha \beta}^2.
	\end{align*}
In addition, we may write
	\begin{align*}
	|\hat{R}^\perp|^2=\sum_{\alpha, \beta\ge 2}\left(|A_{\alpha} A_{\beta}-A_{\beta} A_{\alpha}|^2+\sum_{i,j}|\bar{R}_{ij\alpha\beta}|^2+2\sum_{i,j,p}\langle \bar{R}_{ij\alpha\beta},A_{ip}^{\alpha}A_{jp}^{\beta}-A_{jp}^{\alpha}A_{ip}^{\beta}\rangle\right),
	\end{align*}
where $\left(A_{\alpha} A_{\beta}\right)_{i j}=\left(A_{\alpha}\right)_{i k}\left(A_{\beta}\right)_{k j}=\left(A_{\alpha}\right)_{i k}\left(A_{\beta}\right)_{j k}$ denotes standard matrix multiplication and $|\cdot|$ is the usual square norm of the matrix. We see that inequality \eqref{eq4.6} is equivalent to
	\begin{align}\label{eq4.9}
	\sum_{\alpha, \beta\ge 2}|A_{\alpha} A_{\beta}-A_{\beta} A_{\alpha}|^2+\sum_{\alpha, \beta\ge 2} S_{\alpha \beta}^2 \leq \frac{3}{2} S^2.
	\end{align}
Therefore, we have
	\begin{align*}
	\sum_{i,j,p,q}|\langle A^-_{ij},A^-_{pq}\rangle|^2+|\hat{R}^\bot|^2&\le\frac{3}{2}|A^-|^4+\sum_{i,j,\alpha, \beta\ge 2}\left(|\bar{R}_{ij\alpha\beta}|^2+2\sum_{p}\langle \bar{R}_{ij\alpha\beta},A_{ip}^{\alpha}A_{jp}^{\beta}-A_{jp}^{\alpha}A_{ip}^{\beta}\rangle\right)\\
	&\le\frac{3}{2}|A^-|^4+\sum_{\alpha, \beta\ge 2}\left(\sum_{i,j}|\bar{R}_{ij\alpha\beta}|^2+4|\bar{R}_{ij\alpha\beta}||A^-|^2\right).
	\end{align*}
If $m=2$, inequality \eqref{eq4.6} is trivial since $|\hat{R}^{\perp}|^2=0$ and $\sum_{i,j,p,q}|\langle A^-_{i j}, A^-_{pq}\rangle|^2=|A^-|^4$.\\ If $m \geq 3$, inequality \eqref{eq4.9} follows Lemma 3.3 in \cite{Li1992}. This completes the proof.
\end{proof}
As an immediate consequence of the previous lemma, we have the following estimate.
\begin{lemma} [Upper bound for the reaction terms of $\left(\partial_t-\Delta\right)|A^-|^2$]\label{4.2}
	\begin{align}\label{eq4.10}
	\sum_{i,j,p,q}|\langle A^-_{i j}, A^-_{pq}\rangle|^2+|\hat{R}^{\perp}|^2&+\sum_{i,j}|R_{i j}^{\perp}(\nu_1)|^2\leq\frac{3}{2}|A^-|^4+\sum_{\alpha, \beta\ge 2}\left(\sum_{i,j}|\bar{R}_{ij\alpha\beta}|^2+4|\bar{R}_{ij\alpha\beta}||A^-|^2\right)\nonumber\\
	&+2|\mathring{h}|^2|A^-|^2+\sum_{i,j}|\bar{R}_{ij}(\nu_1)|^2+4|\bar{R}_{ij}(\nu_1)||\mathring{h}||A^-|.
	\end{align}
\end{lemma}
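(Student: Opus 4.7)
The plan is to obtain the estimate by directly combining the two inequalities of Lemma \ref{4.1}, with no further work required beyond noting one non-negative term can be dropped. Observe that the left-hand side of \eqref{eq4.10} is precisely the union of the two distinct groupings bounded by Lemma \ref{4.1}: namely the orthogonal--codimension grouping
\[
\sum_{i,j,p,q}|\langle A^-_{ij}, A^-_{pq}\rangle|^2 + |\hat{R}^\perp|^2
\]
controlled by \eqref{eq4.6}, and the principal--direction grouping
\[
\sum_{i,j}|R^\perp_{ij}(\nu_1)|^2
\]
which is a subset of the left-hand side of \eqref{eq4.5}.

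To bring these together, I would first observe that the term $\sum_{i,j}|\mathring{h}_{ij} A^-_{ij}|^2$ appearing on the left-hand side of \eqref{eq4.5} is non-negative, and can therefore be discarded to yield the weaker but more targeted bound
\[
\sum_{i,j}|R^\perp_{ij}(\nu_1)|^2 \leq 2|\mathring{h}|^2|A^-|^2+\sum_{i,j}|\bar{R}_{ij}(\nu_1)|^2+4|\bar{R}_{ij}(\nu_1)||\mathring{h}||A^-|.
\]
Adding this inequality to \eqref{eq4.6} yields \eqref{eq4.10} directly.

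Since Lemma \ref{4.1} already encapsulates the nontrivial analytic content — the Cauchy--Schwarz step paired with the spread-of-eigenvalues argument for the principal direction piece, the sharp matrix inequality of Li (\cite{Li1992}, Lemma 3.3) for the codimension piece, and Berger's inequality \eqref{Berger} for the ambient curvature contributions — there is no genuine obstacle here. The lemma is a bookkeeping statement packaging those two inequalities into the single reaction-term bound in exactly the form required for the subsequent control of the evolution equation \eqref{evolutionequation} of $|A^-|^2/f$.
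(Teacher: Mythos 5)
Your proof is correct and follows the same route as the paper: the paper's own proof of Lemma \ref{4.2} simply states that it follows from Lemma \ref{4.1}, which is precisely the adding-and-dropping argument you spell out.
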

\begin{proof} The proof follows from Lemma \ref{4.1}.
\end{proof}
Next, we express the reaction term in the evolution of $f$ in terms of $A^-, \mathring{h}$, and $|H|$. From \eqref{ineqa}, since $f=-|A|^2+a$, observe that
	\begin{align}
\label{form11}	&|A^-|^2+|\mathring{h}|^2+\frac{1}{n}|H|^2=|A|^2=-f+a<-f+\frac{1}{n-2}|H|^2+2\sqrt{n}\bar{K}\nonumber\\
&\Longleftrightarrow \left(c_n-\frac{1}{n}\right)|H|^2=\frac{2}{n(n-2)}|H|^2>|A^-|^2+|\mathring{h}|^2+f-2\sqrt{n}\bar{K},\\
\label{form22}	&|A|^2=-f+a>-f+\frac{1}{n-2}|H|^2+4\bar{K}\nonumber\\
&\Longleftrightarrow \left(c_n-\frac{1}{n}\right)|H|^2=\frac{2}{n(n-2)}|H|^2<|A^-|^2+|\mathring{h}|^2+f-4\bar{K},
	\end{align}
where $c_n=\frac{1}{n-2}$. In the following lemma, we get a lower bound for the reaction terms in the evolution of $f$.

\begin{lemma} [Lower bound for the reaction terms of $\left(\partial_t-\Delta\right) f$]\label{lemma4.3} \ \newline
For $\frac{1}{n}<c_n \le \frac{1}{n-2}$ and $n\ge8$, then
	\begin{align}\label{eq4.12}
	&\frac{|A^-|^2}{f}\left(\left(\frac{1}{n}+\mathring{a}^\prime\right)\sum_{i,j}\left|\left\langle A_{i j}, H\right\rangle\right|^2-\sum_{i,j,p,q}\left|\left\langle A_{i j}, A_{pq}\right\rangle\right|^2-\sum_{i,j}|R_{i j}^{\perp}|^2\right)> \nonumber\\
	&-\frac{|A^-|^2}{f}\left(\sum_{\alpha,\beta\ge 2}\left(\sum_{i,j}|\bar{R}_{ij\alpha\beta}|^2+4|\bar{R}_{ij\alpha\beta}||A^-|^2\right)+2\sum_{i,j}|\bar{R}_{ij}(\nu_1)|^2+8|\bar{R}_{ij}(\nu_1)||\mathring{h}||A^-|\right)\nonumber\\
	&-\frac{|A^-|^2}{f}\left( \frac{1}{nc_n-1}2\sqrt{n}\bar{K}\left(|A^-|^2+2f\right)+\frac{nc_n}{nc_n-1}|\mathring{h}|^22\sqrt{n}\bar{K}\right)\nonumber\\
	&+\frac{2}{n c_n-1}|A^-|^4+\frac{n c_n}{n c_n-1}|\mathring{h}|^2|A^-|^2.
	\end{align}
\end{lemma}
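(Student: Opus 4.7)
The plan is to decompose the reaction terms using $A = A^- + h\nu_1$, bound the pure $A^-$ contributions via Lemma \ref{4.2}, and convert the remaining $|H|^2$--dependent part into the target polynomial in $|A^-|^2, |\mathring h|^2, f$ by invoking the pinching identities \eqref{form11}--\eqref{form22}.

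First, the identities $\langle A_{ij},H\rangle = h_{ij}|H|$ and $\langle A_{ij},A_{pq}\rangle = h_{ij}h_{pq} + \langle A^-_{ij},A^-_{pq}\rangle$, together with the split $\sum_{ij}|R^\perp_{ij}|^2 = \sum|R^\perp_{ij}(\nu_1)|^2 + |\hat R^\perp|^2$, rewrite the bracket inside \eqref{eq4.12} as a combination of $|h|^2|H|^2$, $|h|^4$, the pure $A^-$--reaction block $\sum|\langle A^-,A^-\rangle|^2 + |\hat R^\perp|^2 + \sum|R^\perp_{ij}(\nu_1)|^2$, and a cross term $\sum_{\alpha\ge 2}(\sum_{ij}\mathring h_{ij}A^\alpha_{ij})^2$, where the tracelessness of $A^\alpha$ for $\alpha\ge 2$ has converted $h$ into $\mathring h$. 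Applying Lemma \ref{4.2} to the pure $A^-$ block and Cauchy--Schwarz ($\le |\mathring h|^2|A^-|^2$) to the cross term yields the intermediate lower bound
\[
\big(\tfrac{1}{n}+\mathring a'\big)|h|^2|H|^2 - |h|^4 - \tfrac{3}{2}|A^-|^4 - 4|\mathring h|^2|A^-|^2 - \mathcal B,
\]
where $\mathcal B$ is the background contribution of \eqref{eq4.10} and provides the first bracketed group on the right of \eqref{eq4.12}.

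The key second step exploits the exact identity $|h|^2 = |A|^2 - |A^-|^2 = a - f - |A^-|^2$ (from $|A|^2 = -f+a$) and the relation $\tfrac{1}{n}+\mathring a' = a'$ to factor
\[
\big(\tfrac{1}{n}+\mathring a'\big)|h|^2|H|^2 - |h|^4 = |h|^2\bigl(a'|H|^2 - a + f + |A^-|^2\bigr);
\]
a direct calculation with the explicit formula for $a$ reduces the first piece to $a'|H|^2 - a = -\tfrac{4\bar K}{a}\bigl(\tfrac{|H|^2}{n-2}+n\bar K\bigr)$, a controlled $\bar K$--background. Re-expanding $|h|^2 = |\mathring h|^2 + |H|^2/n$ and substituting \eqref{form11}--\eqref{form22} for the remaining $|H|^2$ factors (choosing upper or lower bound by the sign of each coefficient, using $n\ge 8$ together with $\mathring a' < \tfrac{2}{n(n-2)} < \tfrac{1}{n}$ from \eqref{a'(x)}) produces exactly the constants $\tfrac{2}{nc_n-1}$ in front of the $f|A^-|^2$ term and $\tfrac{nc_n}{nc_n-1}$ in front of the $f|\mathring h|^2$ term. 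The extra quartic contributions $\tfrac{n-5}{2}|A^-|^4$, $\tfrac{n-8}{2}|\mathring h|^2|A^-|^2$, and $\tfrac{n-2}{2}f^2$ appearing alongside are nonnegative for $n\ge 8$ and can be discarded, while the $\bar K$--cross terms group into the $\sqrt n\bar K$ block on the right of \eqref{eq4.12}. Multiplying both sides of the resulting inequality by $|A^-|^2/f$ then delivers \eqref{eq4.12}.

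The main obstacle is this final algebraic manipulation. The constants $\tfrac{2}{nc_n-1}$ and $\tfrac{nc_n}{nc_n-1}$ must emerge without slack in the coefficients of $f|A^-|^2$ and $f|\mathring h|^2$, which forces \eqref{form11}--\eqref{form22} to be applied in exactly the right direction for each remaining $|H|^2$ factor. The exact-fit in $|\mathring h|^2|A^-|^2$ at $n=8$ is what makes the hypothesis $n\ge 8$ sharp. Routing all $\bar K$ cross terms (from both the factor $a'|H|^2 - a$ and the constant shifts in the squared pinching) into the two $\sqrt n\bar K$ groups is a delicate bookkeeping exercise, since any inefficiency propagates as a strict loss that would weaken the quartic pinching preservation of Proposition \ref{preservation}.
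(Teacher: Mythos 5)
Your overall decomposition strategy matches the paper's: split $A=A^-+h\nu_1$, reduce the pure $A^-$ reaction block via the matrix inequalities of Lemma~\ref{4.1}, and then convert the $|H|^2$-dependent part into a polynomial in $|A^-|^2,|\mathring h|^2,f$ via \eqref{form11}--\eqref{form22}. Where you genuinely diverge is in the treatment of the $\big(\frac{1}{n}+\mathring a'\big)|h|^2|H|^2-|h|^4$ term. The paper replaces $\mathring a'$ by its asymptotic value $\frac{2}{n(n-2)}=c_n-\frac{1}{n}$ ``for $|H|\to\infty$'' and then feeds \eqref{form11} in twice; you instead factor $|h|^2\bigl(a'|H|^2-a+f+|A^-|^2\bigr)$ and use the exact algebraic identity $a'|H|^2-a=-\frac{4\bar K}{a}\bigl(\frac{|H|^2}{n-2}+n\bar K\bigr)$. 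Your route has the advantage of avoiding the limit heuristic, so the error made by treating $\mathring a'$ as a constant never appears.

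However, there are two concrete points to fix. First, the normal-curvature split must be $\sum_{ij}|R^\perp_{ij}|^2=2\sum_{ij}|R^\perp_{ij}(\nu_1)|^2+|\hat R^\perp|^2$ (the ``$1\alpha$'' entries are counted twice by the antisymmetry in $\alpha,\beta$); with your stated split $\sum|R^\perp_{ij}(\nu_1)|^2+|\hat R^\perp|^2$, Lemma~\ref{4.2} applied once plus Cauchy--Schwarz on $2\sum|\mathring h_{ij}A^-_{ij}|^2$ would not produce the coefficient $-4|\mathring h|^2|A^-|^2$ you write down -- you need inequality \eqref{eq4.5} twice (for $2\sum|R^\perp(\nu_1)|^2+2\sum|\mathring hA^-|^2$) and \eqref{eq4.6} once, exactly as the paper does. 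Second, the $\bar K$-bookkeeping does not close without extra work: bounding $|h|^2\frac{4\bar K}{a}\bigl(\frac{|H|^2}{n-2}+n\bar K\bigr)$ crudely by $2\sqrt n\bar K|h|^2$ and then using \eqref{form22} for $\frac{|H|^2}{n}$ adds an additional $-(n-2)\sqrt n\bar K|A^-|^2$ on top of the $-(n-2)\sqrt n\bar K|A^-|^2$ already generated by $\frac{|H|^2}{n}(f+|A^-|^2)$ via \eqref{form11}, giving a $|A^-|^2$-coefficient of $-2(n-2)\sqrt n\bar K$ rather than the lemma's $-(n-2)\sqrt n\bar K$. This shortfall cannot be uniformly absorbed into $\frac{n-5}{2}|A^-|^4+\frac{n-2}{2}f^2$ and the leftover $4(n-2)\sqrt n\bar K^2$ for all $n\ge 8$ (the Young's-inequality compensation fails once $n$ is large), so you would either need a sharper bound on the quotient $\frac{\frac{|H|^2}{n-2}+n\bar K}{a}$ or a slight reorganisation of which factor of $|H|^2$ receives which of \eqref{form11}--\eqref{form22}. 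You flag exactly this bookkeeping as the main obstacle, and indeed it is where your route, as written, does not yet reproduce the stated coefficients.
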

\begin{proof} We do a computation that is similar to a computation in \cite{AnBa10}, except we do not throw away the pinching term $f$. By the following equations
	\begin{align*}
	&|h|^2=|\mathring{h}|^2+\frac{1}{n}|H|^2,\nonumber\\
	&\sum_{i,j}|\langle A_{ij},H\rangle|^2=|H|^2|h|^2,\nonumber\\
	&\sum_{i,j,p,q}|\langle A_{ij},A_{pq}\rangle|^2=|h|^4+2\sum_{i,j}\big| \mathring{h}_{ij}A^-_{ij}\big|^2+\sum_{i,j,p,q}|\langle A^-_{ij},A^-_{pq}\rangle|^2,\nonumber\\
	&2\sum_{i,j}|R^\bot_{ij}|^2-2\sum_{i,j}|R^\bot_{ij} (\nu_1)|^2=|\hat{R}^\bot|^2+2\sum_{i,j}|R^\bot_{ij}(\nu_1)|^2=|R^\bot|^2,
	\end{align*}
we have
	\begin{align*}
	&\left(\frac{1}{n}+\mathring{a}^\prime\right)\sum_{i,j}\left|\left\langle A_{i j}, H\right\rangle\right|^2-\sum_{i,j,p,q}\left|\left\langle A_{i j}, A_{pq}\right\rangle\right|^2-\sum_{i,j}|R_{i j}^{\perp}|^2=\frac{1}{n} \left(\frac{1}{n}+\mathring{a}^\prime\right)|H|^4+\left(\frac{1}{n}+\mathring{a}^\prime\right)|\mathring{h}|^2|H|^2\\
	&-|\mathring{h}|^4-|\hat{R}^{\perp}|^2-\frac{2}{n}|\mathring{h}|^2|H|^2-\frac{1}{n^2}|H|^4-2\sum_{i,j}|\mathring{h}_{i j} A^-_{i j}|^2-\sum_{i,j,p,q}|\langle A^-_{i j}, A^-_{pq}\rangle|^2-2\sum_{i,j}|R_{i j}^{\perp}(\nu_1)|^2\\
	&=\frac{1}{n}\left(\left(\frac{1}{n}+\mathring{a}^\prime\right)-\frac{1}{n}\right)|H|^4+\left(\left(\frac{1}{n}+\mathring{a}^\prime\right)-\frac{1}{n}\right)|\mathring{h}|^2|H|^2-\frac{1}{n}|\mathring{h}|^2|H|^2-|\mathring{h}|^4\\
	&-2\sum_{i,j}|\mathring{h}_{i j} A^-_{i j}|^2-2\sum_{i,j}|R_{i j}^{\perp}(\nu_1)|^2-\sum_{i,j,p,q}|\langle A^-_{i j}, A^-_{pq}\rangle|^2-|\hat{R}^{\perp}|^2.
	\end{align*}
For $|H|\to\infty$, we can assume from \eqref{a'(x)}, that $\mathring{a}^\prime=\frac{2}{n(n-2)}=c_n-\frac{1}{n}$ and so $\left(\frac{1}{n}+\mathring{a}^\prime\right)-\frac{1}{n}=\mathring{a}^\prime=c_n-\frac{1}{n}$. Therefore, we can use \eqref{form11} to substitute in for $|H|^2$ and cancelling terms we get	
	\begin{align*}
	&\left(\frac{1}{n}+\mathring{a}^\prime\right)\sum_{i,j}\left|\left\langle A_{i j}, H\right\rangle\right|^2-\sum_{i,j,p,q}\left|\left\langle A_{i j}, A_{pq}\right\rangle\right|^2-\sum_{i,j}|R_{i j}^{\perp}|^2\\
	&>\frac{1}{n}\left(|A^-|^2+|\mathring{h}|^2+f-2\sqrt{n}\bar{K} \right)|H|^2+|\mathring{h}|^2\left( |A^-|^2+|\mathring{h}|^2+f-2\sqrt{n}\bar{K}\right)\\
	&-\frac{1}{n}|\mathring{h}|^2|H|^2-|\mathring{h}|^4-2\sum_{i,j}|\mathring{h}_{i j} A^-_{i j}|^2-2\sum_{i,j}|R_{i j}^{\perp}\left(\nu_1\right)|^2-\sum_{i,j,p,q}|\langle A^-_{i j}, A^-_{pq}\rangle|^2-|\hat{R}^{\perp}|^2\\
	&=\frac{1}{n}\left(f+|A^-|^2-2\sqrt{n}\bar{K}\right)|H|^2+\left(f+|A^-|^2-2\sqrt{n}\bar{K}\right) |\mathring{h}|^2\\
	&-2\sum_{i,j}|\mathring{h}_{i j} A^-_{i j}|^2-2\sum_{i,j}|R_{i j}^{\perp}(\nu_1)|^2-\sum_{i,j,p,q}|\langle A^-_{i j}, A^-_{pq}\rangle|^2-|\hat{R}^{\perp}|^2.
	\end{align*}
Using \eqref{form11} once more for the remaining factor of $|H|^2$ gives
	\begin{align*}
	&\left(\frac{1}{n}+\mathring{a}^\prime\right)\sum_{i,j}\left|\left\langle A_{i j}, H\right\rangle\right|^2-\sum_{i,j,p,q}\left|\left\langle A_{i j}, A_{pq}\right\rangle\right|^2-\sum_{i,j}|R_{i j}^{\perp}|^2 \\
	&> \frac{1}{n}\left(f+|A^-|^2-2\sqrt{n}\bar{K}\right)\left(c_n-\frac{1}{n}\right)^{-1}(f+|A^-|^2+|\mathring{h}|^2-2\sqrt{n}\bar{K})+\left(f+|A^-|^2-2\sqrt{n}\bar{K}\right)|\mathring{h}|^2 \\
	&-2\sum_{i,j}|\mathring{h}_{i j} A^-_{i j}|^2-2\sum_{i,j}|R_{i j}^{\perp}(\nu_1)|^2-\sum_{i,j,p,q}|\langle A^-_{i j}, A^-_{pq}\rangle|^2-|\hat{R}^{\perp}|^2 \\
	&= \frac{1}{n c_n-1} f(f+2|A^-|^2+|\mathring{h}|^2-4\sqrt{n}\bar{K})+f|\mathring{h}|^2+\frac{1}{n c_n-1}| A^-|^4+\frac{n c_n}{n c_n-1}|A^-|^2|\mathring{h}|^2 \\
	&-\frac{nc_n}{nc_n-1}2\sqrt{n}\bar{K} |\mathring{h}|^2-\frac{1}{nc_n-1}2\sqrt{n}\bar{K}|A^-|^2-2\sum_{i,j}|\mathring{h}_{i j} A^-_{i j}|^2-2\sum_{i,j}|R_{i j}^{\perp}(\nu_1)|^2\\
	&-\sum_{i,j,p,q}|\langle A^-_{i j}, A^-_{pq}\rangle|^2-|\hat{R}^{\perp}|^2.
	\end{align*}
By the two estimates in Lemma \ref{4.1}, we have
	\begin{align*}
	2\sum_{i,j}|\mathring{h}_{ij}A^-_{ij}|^2&+2\sum_{i,j}|R^\bot_{ij}(\nu_1)|^2+\sum_{i,j,p,q}|\langle A^-_{ij},A^-_{pq}\rangle|^2+|\hat{R}^\bot|^2\le4|\mathring{h}|^2|A^-|^2+2\sum_{i,j}|\bar{R}_{ij}(\nu_1)|^2\\
	&+8|\bar{R}_{ij}(\nu_1)||\mathring{h}||A^-|+\frac{3}{2}|A^-|^4+\sum_{\alpha, \beta\ge 2}\left(\sum_{i,j}|\bar{R}_{ij\alpha\beta}|^2+4|\bar{R}_{ij\alpha\beta}||A^-|^2\right).
	\end{align*}
Therefore,
	\begin{align*}
	\frac{1}{n c_n-1}&|A^-|^4+\frac{n c_n}{n c_n-1}|A^-|^2|\mathring{h}|^2-2\sum_{i,j}|\mathring{h}_{i j} A^-_{i j}|^2-2\sum_{i,j}|R_{i j}^{\perp}(\nu_1)|^2-\sum_{i,j,p,q}|\langle A^-_{i j}, A^-_{pq}\rangle|^2-|\hat{R}^{\perp}|^2\\
	&\geq\left(\frac{1}{n c_n-1}-\frac{3}{2}\right)|A^-|^4+\left(\frac{n c_n}{n c_n-1}-4\right)|\mathring{h}|^2|A^-|^2-2\sum_{i,j}|\bar{R}_{ij}(\nu_1)|^2\\
	&-8|\bar{R}_{ij}(\nu_1)||\mathring{h}||A^-|-\sum_{\alpha, \beta\ge 2}\left(\sum_{i,j}|\bar{R}_{ij\alpha\beta}|^2+4|\bar{R}_{ij\alpha\beta}||A^-|^2\right).
	\end{align*}
For $c_n \le\frac{1}{n-2}$ and $n\ge8$, we have
	\begin{align*}
	\frac{1}{n c_n-1}-\frac{3}{2} \geq 0, \quad \frac{n c_n}{n c_n-1}-4 \geq0.
	\end{align*}
Consequently, we have
	\begin{align*}%\label{eq4.13}
	&\left(\frac{1}{n}+\mathring{a}^\prime\right)\sum_{i,j}\left|\left\langle A_{i j}, H\right\rangle\right|^2-\sum_{i,j,p,q}\left|\left\langle A_{i j}, A_{pq}\right\rangle\right|^2-\sum_{i,j}|R_{i j}^{\perp}|^2 > \frac{2}{n c_n-1} f|A^-|^2+\frac{n c_n}{n c_n-1} f|\mathring{h}|^2\nonumber\\
	&+\frac{1}{n c_n-1} f^2 -\frac{1}{nc_n-1}2\sqrt{n}\bar{K}\left(|A^-|^2+2f\right)-\frac{nc_n}{nc_n-1}|\mathring{h}|^22\sqrt{n}\bar{K}-2\sum_{i,j}|\bar{R}_{ij}(\nu_1)|^2\nonumber\\
	&-8|\bar{R}_{ij}(\nu_1)||\mathring{h}||A^-|-\sum_{\alpha, \beta\ge 2}\left(\sum_{i,j}|\bar{R}_{ij\alpha\beta}|^2+4|\bar{R}_{ij\alpha\beta}||A^-|^2\right).
	\end{align*}
Multiplying both sides by $\frac{|A^-|^2}{f}$, we have
	\begin{align*}
	&\frac{|A^-|^2}{f}\left(\left(\frac{1}{n}+\mathring{a}^\prime\right)\sum_{i,j}\left|\left\langle A_{i j}, H\right\rangle\right|^2-\sum_{i,j,p,q}\left|\left\langle A_{i j}, A_{pq}\right\rangle\right|^2-\sum_{i,j}|R_{i j}^{\perp}|^2\right) \nonumber\\
	&>-\frac{|A^-|^2}{f}\left(\sum_{\alpha,\beta\ge 2}\left(\sum_{i,j}|\bar{R}_{ij\alpha\beta}|^2+4|\bar{R}_{ij\alpha\beta}||A^-|^2\right)+2\sum_{i,j}|\bar{R}_{ij}(\nu_1)|^2+8|\bar{R}_{ij}(\nu_1)||\mathring{h}||A^-|\right)\nonumber\\
	&-\frac{|A^-|^2}{f}\left( \frac{1}{nc_n-1}2\sqrt{n}\bar{K}\left(|A^-|^2+2f\right)+\frac{nc_n}{nc_n-1}|\mathring{h}|^22\sqrt{n}\bar{K}\right)\nonumber\\
	&+\frac{2}{n c_n-1}|A^-|^4+\frac{n c_n}{n c_n-1}|\mathring{h}|^2|A^-|^2\nonumber\\
	&>-\frac{|A^-|^2}{f}\left(\sum_{\alpha,\beta\ge 2}\left(\sum_{i,j}|\bar{R}_{ij\alpha\beta}|^2+4|\bar{R}_{ij\alpha\beta}||A^-|^2\right)+2\sum_{i,j}|\bar{R}_{ij}(\nu_1)|^2+8|\bar{R}_{ij}(\nu_1)||\mathring{h}||A^-|\right)\nonumber\\
	&-\frac{|A^-|^2}{f}\left( \frac{1}{nc_n-1}2\sqrt{n}\bar{K}\left(|A^-|^2+2f\right)+\frac{nc_n}{nc_n-1}|\mathring{h}|^22\sqrt{n}\bar{K}\right)\nonumber\\
	&+\frac{2}{n c_n-1}|A^-|^4+\frac{n c_n}{n c_n-1}|\mathring{h}|^2|A^-|^2
	\end{align*}
and this completes the proof.
\end{proof}

\begin{lemma}[Reaction term estimate]\label{lemma4.4} \ \newline
If $0<\delta\le\frac{1}{2}$ and $\frac{1}{n}<c_n \le\frac{1}{n-2}$, then
	\begin{align}\label{eq4.14}
	&\sum_{i,j,p,q}|\langle A^-_{i j}, A^-_{pq}\rangle|^2+|\hat{R}^{\perp}|^2+\sum_{i,j}|R_{i j}^{\perp}(\nu_1)|^2<(1-\delta) \frac{|A^-|^2}{f}\Big(\left(\frac{1}{n}+\mathring{a}^\prime\right)\sum_{i,j}\left|\left\langle A_{i j}, H\right\rangle\right|^2\nonumber\\
	&-\sum_{i,j,p,q}\left|\left\langle A_{i j}, A_{pq}\right\rangle\right|^2-\sum_{i,j}|R_{i j}^{\perp}|^2\Big)+\left( 1+(1-\delta)\frac{|A^-|^2}{f}\right)\sum_{\alpha,\beta\ge2}\left(\sum_{i,j}|\bar{R}_{ij\alpha\beta}|^2+4|\bar{R}_{ij\alpha\beta}||A^-|^2\right)\nonumber \\
	&+(1-\delta)\frac{|A^-|^2}{f}\left(\frac{1}{nc_n-1}2\sqrt{n}\bar{K}\left(|A^-|^2+2f\right)+\frac{nc_n}{nc_n-1}|\mathring{h}|^22\sqrt{n}\bar{K}\right)\nonumber\\
	&+\left(1+(1-\delta)\frac{2|A^-|^2}{f}\right)\left(\sum_{i,j}|\bar{R}_{ij}(\nu_1)|^2+4|\bar{R}_{ij}(\nu_1)||\mathring{h}||A^-|\right).
	\end{align}
\end{lemma}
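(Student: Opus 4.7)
The proof is a clean algebraic combination of the two preceding lemmas. The plan is to upper-bound the left-hand side of (4.14) via Lemma \ref{4.2}, and to lower-bound the first main term on the right-hand side of (4.14) by multiplying Lemma \ref{lemma4.3} through by $(1-\delta)$. The two resulting bounds can be compared directly: the leading quartic terms on each side turn out to match with room to spare under the hypothesis $c_n \leq \frac{1}{n-2}$, $n\geq 8$, $\delta \leq \tfrac12$, and the remaining ambient curvature corrections are absorbed termwise into the error coefficients $\bigl(1+(1-\delta)\tfrac{|A^-|^2}{f}\bigr)$ and $\bigl(1+(1-\delta)\tfrac{2|A^-|^2}{f}\bigr)$ appearing on the right-hand side of (4.14).

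The key algebraic verification is for the leading terms. Lemma \ref{4.2} contributes $\tfrac32|A^-|^4 + 2|\mathring h|^2 |A^-|^2$, while $(1-\delta)$ times the lower bound in Lemma \ref{lemma4.3} contributes $(1-\delta)\bigl[\tfrac{2}{nc_n-1}|A^-|^4 + \tfrac{nc_n}{nc_n-1}|\mathring h|^2|A^-|^2\bigr]$. Since $c_n \leq \tfrac{1}{n-2}$ gives $nc_n-1 \leq \tfrac{2}{n-2}$, we have $\tfrac{2}{nc_n-1}\geq n-2\geq 6$ and $\tfrac{nc_n}{nc_n-1}=1+\tfrac{1}{nc_n-1}\geq 1+\tfrac{n-2}{2}\geq 4$ for $n\geq 8$. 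Combined with $\delta\leq \tfrac12$, this yields $(1-\delta)\tfrac{2}{nc_n-1}\geq 3 > \tfrac32$ and $(1-\delta)\tfrac{nc_n}{nc_n-1}\geq 2$, so the leading contributions from Lemma \ref{4.2} are strictly dominated by those produced from Lemma \ref{lemma4.3}.

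Having taken care of the leading quartic terms, I would then match the remaining error terms one by one. The ambient curvature contributions $\sum_{\alpha,\beta\geq 2}\bigl(\sum_{i,j}|\bar R_{ij\alpha\beta}|^2 + 4|\bar R_{ij\alpha\beta}||A^-|^2\bigr)$ and $\sum_{i,j}|\bar R_{ij}(\nu_1)|^2 + 4|\bar R_{ij}(\nu_1)||\mathring h||A^-|$ that appear in Lemma \ref{4.2} with coefficient $1$ are accounted for by the standalone $1$ inside the coefficients $(1+(1-\delta)\tfrac{|A^-|^2}{f})$ and $(1+(1-\delta)\tfrac{2|A^-|^2}{f})$ on the right-hand side of (4.14). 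Simultaneously, the negative terms carried along the right-hand side of Lemma \ref{lemma4.3} (those with prefactor $\tfrac{|A^-|^2}{f}$) become, after multiplication by $-(1-\delta)$ and rearrangement to the other side, exactly the $(1-\delta)\tfrac{|A^-|^2}{f}$ portions of the same coefficients, together with the separate $(1-\delta)\tfrac{|A^-|^2}{f}\bigl(\tfrac{1}{nc_n-1}2\sqrt n\bar K(|A^-|^2+2f)+\tfrac{nc_n}{nc_n-1}|\mathring h|^2 2\sqrt n\bar K\bigr)$ term. The factor $2$ in the coefficient $(1+(1-\delta)\tfrac{2|A^-|^2}{f})$ precisely accommodates the doubling of $\sum|\bar R_{ij}(\nu_1)|^2$ and $|\bar R_{ij}(\nu_1)||\mathring h||A^-|$ in Lemma \ref{lemma4.3} relative to Lemma \ref{4.2}.

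The argument is purely algebraic and requires no further analytic input, so I do not anticipate a serious obstacle beyond bookkeeping. The only delicate point is ensuring that every ambient curvature term on the right-hand side of (4.14) is precisely large enough to dominate the sum of (i) the matching term from Lemma \ref{4.2} and (ii) $(1-\delta)$ times the matching term from the error part of Lemma \ref{lemma4.3}. Once these correspondences are tabulated carefully, adding the Lemma \ref{4.2} bound to the rearranged $(1-\delta)$-multiple of Lemma \ref{lemma4.3} and reorganising yields (4.14) directly.
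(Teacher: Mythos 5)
Your proposal follows the paper's proof exactly: you combine Lemma~\ref{4.2} (inequality~\eqref{eq4.10}) with $(1-\delta)$ times Lemma~\ref{lemma4.3} (inequality~\eqref{eq4.12}), verify that the coefficients $\frac32-\frac{2(1-\delta)}{nc_n-1}$ and $2-(1-\delta)\frac{nc_n}{nc_n-1}$ are nonpositive using $c_n\le\frac{1}{n-2}$, $n\ge 8$, and $\delta\le\frac12$, and collect the ambient curvature error terms into the coefficients $\bigl(1+(1-\delta)\frac{|A^-|^2}{f}\bigr)$ and $\bigl(1+(1-\delta)\frac{2|A^-|^2}{f}\bigr)$ precisely as in the paper. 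The bookkeeping and the key numerical inequalities are identical; no gap.
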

\begin{proof}
In view of \eqref{eq4.10} and \eqref{eq4.12}, we have
	\begin{align*}
	&\sum_{i,j,p,q}|\langle A^-_{i j}, A^-_{pq}\rangle|^2+|\hat{R}^{\perp}|^2+\sum_{i,j}|R_{i j}^{\perp}(\nu_1)|^2-(1-\delta) \frac{|A^-|^2}{f}\Big(\left(\frac{1}{n}+\mathring{a}^\prime\right)\sum_{i,j}\left|\left\langle A_{i j}, H\right\rangle\right|^2\\
	&-\sum_{i,j,p,q}\left|\left\langle A_{i j}, A_{pq}\right\rangle\right|^2-\sum_{i,j}|R_{i j}^{\perp}|^2\Big) \\
	&< \frac{3}{2}|A^-|^4+2|\mathring{h}|^2|A^-|^2-\frac{2(1-\delta)}{n c_n-1}|A^-|^4-(1-\delta)\frac{n c_n}{n c_n-1}|\mathring{h}|^2|A^-|^2 \\
	&+\sum_{\alpha, \beta\ge 2}\left(\sum_{i,j}|\bar{R}_{ij\alpha\beta}|^2+4|\bar{R}_{ij\alpha\beta}||A^-|^2\right)+\sum_{i,j}|\bar{R}_{ij}(\nu_1)|^2+4|\bar{R}_{ij}(\nu_1)||\mathring{h}||A^-|\\
	&+(1-\delta)\frac{|A^-|^2}{f}\left(\sum_{\alpha,\beta\ge 2}\left(\sum_{i,j}|\bar{R}_{ij\alpha\beta}|^2+4|\bar{R}_{ij\alpha\beta}||A^-|^2\right)+2\sum_{i,j}|\bar{R}_{ij}(\nu_1)|^2+8|\bar{R}_{ij}(\nu_1)||\mathring{h}||A^-|\right)\\
	&+(1-\delta)\frac{|A^-|^2}{f}\left(\frac{1}{nc_n-1}2\sqrt{n}\bar{K}\left(|A^-|^2+2f\right)+\frac{nc_n}{nc_n-1}|\mathring{h}|^22\sqrt{n}\bar{K}\right)\\
	&=\left(\frac{3}{2}-\frac{2(1-\delta)}{n c_n-1}\right)|A^-|^4+\left(2-(1-\delta)\frac{n c_n}{n c_n-1}\right)|\mathring{h}|^2|A^-|^2\\
	&+\left( 1+(1-\delta)\frac{|A^-|^2}{f}\right)\sum_{\alpha,\beta\ge2}\left(\sum_{i,j}|\bar{R}_{ij\alpha\beta}|^2+4|\bar{R}_{ij\alpha\beta}||A^-|^2\right) \\
	&+\left( 1+(1-\delta)\frac{2|A^-|^2}{f}\right)\left(\sum_{i,j}|\bar{R}_{ij}(\nu_1)|^2+4|\bar{R}_{ij}(\nu_1)||\mathring{h}||A^-|\right)\\
	&+(1-\delta)\frac{|A^-|^2}{f}\left(\frac{1}{nc_n-1}2\sqrt{n}\bar{K}\left(|A^-|^2+2f\right)+\frac{nc_n}{nc_n-1}|\mathring{h}|^22\sqrt{n}\bar{K}\right).
	\end{align*}
For $c_n\le\frac{1}{n-2}$ and if $\delta\le\frac{1}{2}$, we have
	\begin{align*}
	\frac{3}{2}-\frac{2(1-\delta)}{nc_n-1}\le \frac{3}{2}-6(1-\delta)\le 0 \ \ \text{and} \ \ 2-\frac{nc_n(1-\delta)}{nc_n-1}\le 2-4(1-\delta)\le0,
	\end{align*}
which gives \eqref{eq4.14}.
\end{proof}

We will show that the rest of the gradient terms satisfy the following:
	\begin{align*}
	4 \sum_{i,j,k}Q_{i j k}\left\langle A^-_{i j}, \nabla_k^{\perp} \nu_1\right\rangle&\le2|\nabla^{\perp} A^-|^2+2(1-\delta)\frac{|A^-|^2}{f}\left(|\nabla^{\perp} A|^2-\frac{2(n-1)}{n(n+2)}|\nabla^{\perp} H|^2\right)\\
	&+\frac{(n-1)(n-2)}{n+2}2\sqrt{n}\bar{K}|\nabla^\bot\nu_1|^2+(1-\delta)(n-2)\frac{|A^-|^2}{f}2\sqrt{n}\bar{K}|\nabla^{\perp} \nu_1|^2.
	\end{align*}
For the gradient terms, we denote $\tilde{c}_n=\frac{2(n-1)}{n(n+2)}$, where $\frac{1}{n}<\tilde{c}_n\le\frac{3}{n+2}$.
\begin{lemma}[{Lower bound for Bochner term of $\left(\partial_t-\Delta\right)|A^-|^2$}]\label{74.6} \ \newline
If $\frac{1}{n}<\tilde{c}_n \leq \frac{3}{n+2}$ and $\frac{1}{n}<c_n\le\frac{1}{n-2}$, then
	\begin{align*}
	2|\hat{\nabla}^{\perp} A^-|^2&\geq\left(\frac{(n-1)(n-2)}{n+2}-2\right)|\mathring{h}|^2|\nabla^{\perp} \nu_1|^2+\frac{(n-1)(n-2)}{n+2}(|A^-|^2+f-2\sqrt{n}\bar{K})|\nabla^{\perp} \nu_1|^2.
	\end{align*}\end{lemma}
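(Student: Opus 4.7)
The plan is to combine the refined Kato-type inequality \eqref{4.22naff} with the pinching bound \eqref{form11}. The right-hand side of \eqref{4.22naff} contains the cross term $\mathring{h}_{jk}\nabla_i^\perp\nu_1$ coupled to $\hat{\nabla}_i^\perp A^-_{jk}$, and the goal is to isolate $|\hat{\nabla}^\perp A^-|^2$, while converting the $|H|^2$ prefactor into the tracefree and pinching quantities that appear in the statement.

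First, I would expand the squared norm in \eqref{4.22naff} using the elementary bound $|a+b|^2 \le 2|a|^2 + 2|b|^2$ termwise, which yields
\begin{align*}
\sum_{i,j,k}\bigl|\hat{\nabla}_i^\perp A^-_{jk} + \mathring{h}_{jk}\nabla_i^\perp\nu_1\bigr|^2 \le 2|\hat{\nabla}^\perp A^-|^2 + 2|\mathring{h}|^2|\nabla^\perp\nu_1|^2.
\end{align*}
Combining with \eqref{4.22naff}, this gives the preliminary estimate
\begin{align*}
2|\hat{\nabla}^\perp A^-|^2 \ge \frac{2(n-1)}{n(n+2)}|H|^2|\nabla^\perp\nu_1|^2 - 2|\mathring{h}|^2|\nabla^\perp\nu_1|^2.
\end{align*}

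Next, I would invoke the pinching identity \eqref{form11}, which under the quartic pinching condition guarantees
\begin{align*}
|H|^2 \ge \frac{n(n-2)}{2}\bigl(|A^-|^2 + |\mathring{h}|^2 + f - 2\sqrt{n}\bar{K}\bigr).
\end{align*}
Multiplying this by $\frac{2(n-1)}{n(n+2)}$ produces precisely the factor $\frac{(n-1)(n-2)}{n+2}$ attached to $\bigl(|A^-|^2 + |\mathring{h}|^2 + f - 2\sqrt{n}\bar{K}\bigr)$. Substituting back and collecting the two $|\mathring{h}|^2|\nabla^\perp\nu_1|^2$ contributions into a single coefficient $\frac{(n-1)(n-2)}{n+2}-2$ then yields the desired inequality.

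The argument is essentially algebraic once the two ingredients are identified, and no step is really an obstacle; the only subtlety is making sure the Cauchy-type bound is applied in the correct direction so that $|\hat{\nabla}^\perp A^-|^2$ appears as a lower bound after rearrangement, and that the use of \eqref{form11} is admissible (it is, since $f \ge 0$ under the preserved pinching from Proposition \ref{preservation}). The constraints $\frac{1}{n}<\tilde{c}_n\le \frac{3}{n+2}$ and $\frac{1}{n}<c_n\le\frac{1}{n-2}$ enter only to ensure that \eqref{4.22naff} (derived via \eqref{4.20naff} with constant $\tilde{c}_n$) and \eqref{form11} (with constant $c_n$) are both in force, so no additional case analysis is needed.
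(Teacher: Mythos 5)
Your proposal is correct and follows essentially the same route as the paper: apply the elementary bound $|a+b|^2\le 2|a|^2+2|b|^2$ to peel $|\hat{\nabla}^\perp A^-|^2$ out of the square appearing in \eqref{4.22naff}, then substitute the pinching bound \eqref{form11} (multiplied by $\frac{2(n-1)}{n(n+2)}$) for the $|H|^2$ prefactor, and collect the two $|\mathring{h}|^2|\nabla^\perp\nu_1|^2$ terms. The paper writes the rescaling constant as $\frac{2(n-1)}{(n+2)(nc_n-1)}$ and then uses $c_n\le\frac{1}{n-2}$, but with $c_n=\frac{1}{n-2}$ this is identical to your computation.
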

\begin{proof}
We begin by applying Young's inequality
	\begin{align*}
	\sum_{i,j,k}|\hat{\nabla}_i^{\perp} A^-_{j k}+\mathring{h}_{j k} \nabla_i^{\perp} \nu_1|^2&=|\hat{\nabla}^{\perp} A^-|^2+2\sum_{i,j,k}\langle\hat{\nabla}_i^{\perp} A^-_{j k}, \mathring{h}_{j k} \nabla_i^{\perp} \nu_1\rangle+|\mathring{h}|^2|\nabla^{\perp} \nu_1|^2 \\
	&\leq 2|\hat{\nabla}^{\perp} A^-|^2+2|\mathring{h}|^2|\nabla^{\perp} \nu_1|^2.
	\end{align*}Multiplying both sides of \eqref{form11} by $\frac{2(n-1)}{(n+2)(nc_n-1)}$, gives
	\begin{align*}
	\tilde{c}_n|H|^2&>\frac{2(n-1)}{(n+2)\left(nc_n-1\right)}\left(f+|A^-|^2+|\mathring{h}|^2-2\sqrt{n}\bar{K}\right)\\
	&> \frac{(n-1)(n-2)}{n+2}\left(f+|A^-|^2+|\mathring{h}|^2-2\sqrt{n}\bar{K}\right).
	\end{align*}In view of \eqref{4.22naff}, our observations give us that
	\begin{align*}
	\frac{(n-1)(n-2)}{n+2}\left(f+|A^-|^2+|\mathring{h}|^2-2\sqrt{n}\bar{K}\right)|\nabla^{\perp} \nu_1|^2&\leq 2|\hat{\nabla}^{\perp} A^-|^2+2|\mathring{h}|^2|\nabla^{\perp} \nu_1|^2.
	\end{align*}Subtracting the $|\mathring{h}|^2|\nabla^{\perp} \nu_1|^2$ term on the right-hand side gives
	\begin{align*}%\label{7eqn_nablaA-}
	\frac{(n-1)(n-2)}{n+2}(f+|A^-|^2&-2\sqrt{n}\bar{K})|\nabla^{\perp} \nu_1|^2+\left(\frac{(n-1)(n-2)}{n+2}-2\right)|\mathring{h}|^2|\nabla^{\perp} \nu_1|^2 \nonumber\\
	&\leq 2|\hat{\nabla}^{\perp}A^-|^2,
	\end{align*}
which gives the estimate of the lemma.
%Since $\tilde{c}_n=\frac{2(n-1)}{n(n+2)}\le\frac{3}{n+2}$, then $n\tilde{c}_n-1=\frac{n-4}{n+2}$ and
%{\color{red}For $c_n<\frac{1}{n-2}$}
%	\begin{align*}
%	\frac{2(n-1)}{(n+2)(n\tilde{c}_n-1)}=\frac{2(n-1)}{n-4}{\color{red}>\frac{(n-1)(n-2)}{n+2}}.
%	\end{align*}
%%If $c_m \leq \frac{1}{m-\tilde{k}}$, then $m c_m-1 \leq \frac{1}{3}$ and
%%	\begin{align*}
%%	\frac{7m-18}{9(m+2)\left(m c_m-1\right)}&\geq \frac{7m-18}{3(m+2)},\\
%	%	\frac{7m-18}{9(m+2)\left(m c_m-1\right)}-2&\geq \frac{m-30}{3(m+2)}.
%%	\end{align*}
%Plugging this into \eqref{7eqn_nablaA-}, gives the estimate of the lemma.
\end{proof}
\begin{lemma} [{Lower bound for Bochner term of $\left(\partial_t-\Delta\right) f$}]\label{74.7} \ \newline
If $\frac{1}{n}<\tilde{c}_n \leq \frac{3}{n+2}$, then
	\begin{align*}
	2 \frac{|A^-|^2}{f}\left(|\nabla^{\perp} A|^2-\tilde{c}_n|\nabla^{\perp} H|^2\right)&\geq\frac{n+2}{n-1}\frac{|A^-|^2}{f}\sum_{i,j,k}|\langle\nabla_i^{\perp} \mathring{A}_{j k}, \nu_1\rangle|^2+(n-2)|A^-|^2|\nabla^{\perp} \nu_1|^2\\
	&-(n-2)\frac{|A^-|^2}{f}2\sqrt{n}\bar{K}|\nabla^{\perp} \nu_1|^2.
	\end{align*}\end{lemma}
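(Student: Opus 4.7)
\smallskip

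The strategy is to divide the claimed inequality through by the positive factor $2|A^-|^2/f$, which reduces the statement to the pointwise gradient estimate
\begin{equation*}
|\nabla^{\perp} A|^2 - \tilde{c}_n |\nabla^{\perp} H|^2 \;\geq\; \tfrac{n+2}{2(n-1)}\sum_{i,j,k}|\langle\nabla_i^{\perp} \mathring{A}_{j k}, \nu_1\rangle|^2 + \tfrac{n-2}{2}\bigl(f-2\sqrt{n}\bar{K}\bigr)|\nabla^{\perp}\nu_1|^2.
\end{equation*}
To attack this, I would first combine the decompositions \eqref{2.22naff} and \eqref{2.23naff} from Proposition \ref{prop2.2naff} with the three $\mathring{h}$-identities \eqref{eq4.15}, \eqref{eq4.16}, \eqref{eq4.17}. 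A direct substitution, together with the identity $\tfrac{1}{n}-\tilde{c}_n=-\tfrac{n-4}{n(n+2)}$, rewrites the left-hand side cleanly as
\begin{equation*}
|\nabla^{\perp} A|^2 - \tilde{c}_n|\nabla^{\perp} H|^2 \;=\; \sum_{i,j,k}|\hat{\nabla}_i^{\perp} A^-_{j k}+\mathring{h}_{j k}\nabla_i^{\perp}\nu_1|^2 + \sum_{i,j,k}|\langle\nabla_i^{\perp}\mathring{A}_{j k},\nu_1\rangle|^2 - \tfrac{n-4}{n(n+2)}\bigl(|H|^2|\nabla^{\perp}\nu_1|^2+|\nabla|H||^2\bigr).
\end{equation*}

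The next step is to absorb the two negative correction terms using the refined Kato-type inequalities \eqref{4.21naff} and \eqref{4.22naff}. The normal-direction Bochner term, estimated by $|\hat{\nabla}^{\perp}A^-+\mathring{h}\otimes\nabla^{\perp}\nu_1|^2\geq\tilde{c}_n|H|^2|\nabla^{\perp}\nu_1|^2$, combines with the $-\tfrac{n-4}{n(n+2)}|H|^2|\nabla^{\perp}\nu_1|^2$ correction to produce precisely $\tfrac{1}{n}|H|^2|\nabla^{\perp}\nu_1|^2$, since $\tilde{c}_n-\tfrac{n-4}{n(n+2)}=\tfrac{1}{n}$. Simultaneously, using $|\nabla|H||^2\leq \tilde{c}_n^{-1}\sum|\langle\nabla^{\perp}\mathring{A},\nu_1\rangle|^2=\tfrac{n(n+2)}{2(n-1)}\sum|\langle\nabla^{\perp}\mathring{A},\nu_1\rangle|^2$ on the tangential side, the coefficient multiplying $\sum|\langle\nabla^{\perp}\mathring{A},\nu_1\rangle|^2$ reduces to $1-\tfrac{n-4}{2(n-1)}=\tfrac{n+2}{2(n-1)}$, matching the target exactly.

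It remains to convert the residual $\tfrac{1}{n}|H|^2|\nabla^{\perp}\nu_1|^2$ into the $f$-dependent form appearing in the target. For this I would invoke the quartic pinching in the form \eqref{form11}, which reads $\tfrac{2}{n(n-2)}|H|^2>|A^-|^2+|\mathring{h}|^2+f-2\sqrt{n}\bar{K}$; dropping the two nonnegative terms on the right yields $\tfrac{|H|^2}{n}\geq\tfrac{n-2}{2}(f-2\sqrt{n}\bar{K})$, which is trivially true when the right side is negative. Multiplying the resulting inequality through by $\tfrac{2|A^-|^2}{f}$ recovers the statement of the lemma. The only delicate point is the bookkeeping of the three arithmetic identities $\tfrac{1}{n}-\tilde{c}_n=-\tfrac{n-4}{n(n+2)}$, $\tilde{c}_n-\tfrac{n-4}{n(n+2)}=\tfrac{1}{n}$, and $1-\tfrac{n-4}{2(n-1)}=\tfrac{n+2}{2(n-1)}$, which are what make the Kato-based absorption produce exactly the target coefficients with no slack—so the main obstacle is not a conceptual one but rather ensuring this precise algebraic fit.
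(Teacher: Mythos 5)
Your proposal is correct and follows essentially the same route as the paper. The only cosmetic difference is that you apply the trace-split \eqref{eq4.16} up front and then invoke \eqref{4.22naff} on the trace-free form $\mathring{h}$, whereas the paper works directly with $\sum_{i,j,k}|\hat{\nabla}_i^{\perp} A^-_{jk}+h_{jk}\nabla_i^{\perp}\nu_1|^2$ and uses \eqref{4.20naff}; since the paper derives \eqref{4.20naff} precisely from \eqref{4.22naff} via \eqref{eq4.16}, and $\tfrac{3}{n+2}-\tilde{c}_n=\tfrac{1}{n}$, the two bookkeepings produce the identical intermediate bound $\tfrac{1}{n}|H|^2|\nabla^{\perp}\nu_1|^2$, and the remaining steps (apply \eqref{4.21naff} to the tangential piece, invoke \eqref{form11} to replace $\tfrac{1}{n}|H|^2$ by $\tfrac{n-2}{2}(f-2\sqrt{n}\bar{K})$, then multiply by $2|A^-|^2/f$) coincide with the paper's.
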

\begin{proof}
Using \eqref{2.22naff} and \eqref{2.23naff}, we have
	\begin{align*}
	|\nabla^{\perp} A|^2-\tilde{c}_n|\nabla^{\perp} H|^2&=\sum_{i,j,k}|\langle\nabla_i^{\perp} A^-_{j k}, \nu_1\rangle+\nabla_i h_{j k}|^2-\tilde{c}_n|\nabla| H \|^2 \\
	&+\sum_{i,j,k}|\hat{\nabla}_i^{\perp} A^-_{j k}+h_{j k} \nabla_i^{\perp} \nu_1|^2-\tilde{c}_n|H|^2|\nabla^{\perp} \nu_1|^2.
	\end{align*}Note that by \eqref{eq4.15}, \eqref{eq4.17} and \eqref{4.21naff} we have
	\begin{align*}
	\sum_{i,j,k}|\langle\nabla_i^{\perp} A^-_{j k}, \nu_1\rangle+\nabla_i h_{j k}|^2-\tilde{c}_n|\nabla| H||^2&=\sum_{i,j,k}|\langle\nabla_i^{\perp} \mathring{A}_{j k}, \nu_1\rangle|^2-\frac{(n-4)}{n(n+2)}|\nabla| H||^2\\
	&\geq\left(1-\frac{n-4}{2(n-1)}\right)\sum_{i,j,k}|\langle\nabla_i^{\perp} \mathring{A}_{j k}, \nu_1\rangle|^2\\
	&=\frac{n+2}{2(n-1)}\sum_{i,j,k}|\langle\nabla_i^{\perp} \mathring{A}_{j k}, \nu_1\rangle|^2 .
	\end{align*}In view of \eqref{4.20naff} and \eqref{form11}, we have
	\begin{align*}
	\sum_{i,j,k}|\hat{\nabla}_i^\bot A^-_{jk}+h_{jk} \nabla_i^\bot\nu_1|^2-\tilde{c}_n|H|^2|\nabla^\bot \nu_1|^2&\geq\left(\frac{3}{n+2}-\tilde{c}_n\right)|H|^2|\nabla^\bot\nu_1|^2 \\
	%&=\frac{n}{nc_n-1}\frac{1}{n}(f+|A^-|^2+|\mathring{h}|^2-2\sqrt{n}\bar{K})|\nabla^\bot\nu_1|^2 \\
	&\ge\frac{n-2}{2}(f-2\sqrt{n}\bar{K})|\nabla^\bot \nu_1|^2.
	\end{align*}Thus, by the three previous computations, we have
	\begin{align*}
	2 \frac{|A^-|^2}{f}\left(|\nabla^{\perp} A|^2-\tilde{c}_n|\nabla^{\perp} H|^2\right)&\geq\frac{n+2}{n-1} \frac{|A^-|^2}{f}\sum_{i,j,k}|\langle\nabla_i^{\perp} \mathring{A}_{j k}, \nu_1\rangle|^2 \\
	&+(n-2)\frac{|A^-|^2}{f}(f-2\sqrt{n}\bar{K})|\nabla^{\perp} \nu_1|^2\\
	&= \frac{n+2}{n-1}\frac{|A^-|^2}{f}\sum_{i,j,k}|\langle\nabla_i^{\perp} \mathring{A}_{j k}, \nu_1\rangle|^2 +(n-2)|A^-|^2|\nabla^{\perp} \nu_1|^2\\
	&+(2-n)\frac{|A^-|^2}{f}2\sqrt{n}\bar{K}|\nabla^{\perp} \nu_1|^2,
	\end{align*}
which gives the desired result.
%For $\tilde{c}_n=\frac{2(n-1)}{n(n+2)}\le\frac{3}{n+2}$, then $n\tilde{c}_n-1=\frac{2n(n-1)}{n(n+2)}-1=\frac{n-4}{n+2}$ and
%	\begin{align*}
%	&2-\frac{(n+2)\left(n\tilde{c}_n-1\right)}{n-1}=\frac{n+2}{n-1},\\
%	&\frac{2n}{n\tilde{c}_n-1}\left(\frac{3}{n+2}-\tilde{c}_n\right)=\frac{2n+4}{n-4}.
%	\end{align*}
%If $\frac{1}{m}\le c_m \leq \frac{1}{m-\tilde{k}}\le\frac{1}{3\tilde{k}}$, then $m c_m-1 \leq \frac{1}{3}$, we have
%	\begin{align*}
%	2-\frac{18(m+2)\left(m c_m-1\right)}{7m-18}&\geq 2-\frac{6(m+2)}{7m-18}=\frac{8m-48}{7m-18},
%	\end{align*}
%	\begin{align*}
%	\frac{2 m}{m c_m-1}\left(\frac{16}{9(m+2)}-c_m\right)&\geq 6m\left(\frac{16}{9(m+2)}-\frac{1}{3\tilde{k}}\right)\ge 8\Big(\frac{16\tilde{k}}{3(m+2)}-1\Big),
%	\end{align*}
%	\begin{align*}
%	\frac{2mc_m}{m c_m-1}\ge6.
%	\end{align*}
%This establishes the inequality of the lemma.
\end{proof}
\begin{lemma}[Upper bound for gradient term of $\left(\partial_t-\Delta\right)|A^-|^2$ ]\label{74.8} \ \newline
If $\frac{1}{n}<c_n\le\frac{1}{n-2}$, then
	\begin{align*}
	4 \sum_{i,j,k}Q_{i j k}\langle A^-_{i j}, \nabla_k^{\perp} \nu_1\rangle&\leq 2 |\langle\nabla^{\perp} A^-, \nu_1\rangle|^2+\left(2 a_2+2 a_3\frac{n+2}{(n-1)(n-2)}\right) \frac{|A^-|^2}{f}|\langle\nabla^{\perp} \mathring{A}, \nu_1\rangle|^2 \nonumber\\
	&+2|A^-|^2|\nabla^{\perp} \nu_1|^2+\frac{2}{a_2} f|\nabla^{\perp} \nu_1|^2+\frac{2}{a_3}|\mathring{h}|^2|\nabla^{\perp} \nu_1|^2.
	\end{align*}
\end{lemma}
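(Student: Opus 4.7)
The plan is to first simplify the quantity $Q_{ijk} := \nabla_k h_{ij} - |H|^{-1}h_{ij}\nabla_k|H|$ that appears in the last gradient term of \eqref{evolutionequation}. Using the splitting $h = \mathring{h} + \frac{|H|}{n}g$ together with the parallelism of the metric, I expand $\nabla_k h_{ij} = \nabla_k \mathring{h}_{ij} + \frac{\nabla_k|H|}{n}g_{ij}$ and $|H|^{-1}h_{ij}\nabla_k|H| = |H|^{-1}\mathring{h}_{ij}\nabla_k|H| + \frac{\nabla_k|H|}{n}g_{ij}$, so that the pure trace contributions cancel to give $Q_{ijk} = \nabla_k\mathring{h}_{ij} - |H|^{-1}\mathring{h}_{ij}\nabla_k|H|$. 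This cancellation is consistent with the fact that any $g_{ij}$ piece is annihilated by the inner product with the trace-free tensor $A^-_{ij}$.

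Next I would add and subtract $\langle\nabla_k^\perp A^-_{ij},\nu_1\rangle$ in order to synthesise Naff's combination $\langle\nabla_k^\perp\mathring{A}_{ij},\nu_1\rangle = \nabla_k\mathring{h}_{ij} + \langle\nabla_k^\perp A^-_{ij},\nu_1\rangle$ from \eqref{eq4.17}. This decomposes $4\sum_{i,j,k}Q_{ijk}\langle A^-_{ij},\nabla_k^\perp\nu_1\rangle$ into three pieces: (i) $4\sum\langle\nabla_k^\perp\mathring{A}_{ij},\nu_1\rangle\langle A^-_{ij},\nabla_k^\perp\nu_1\rangle$; (ii) $-4\sum\langle\nabla_k^\perp A^-_{ij},\nu_1\rangle\langle A^-_{ij},\nabla_k^\perp\nu_1\rangle$; (iii) $-4\sum|H|^{-1}\mathring{h}_{ij}\nabla_k|H|\langle A^-_{ij},\nabla_k^\perp\nu_1\rangle$.

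Piece (ii) is bounded via Cauchy--Schwarz followed by $2ab\le a^2+b^2$, producing $2|\langle\nabla^\perp A^-,\nu_1\rangle|^2 + 2|A^-|^2|\nabla^\perp\nu_1|^2$. For piece (i), I apply Young's inequality $2xy \le a_2 x^2 + a_2^{-1} y^2$ with $x = |\langle\nabla^\perp\mathring{A},\nu_1\rangle||A^-|/\sqrt{f}$ and $y = \sqrt{f}\,|\nabla^\perp\nu_1|$ to obtain $2a_2\frac{|A^-|^2}{f}|\langle\nabla^\perp\mathring{A},\nu_1\rangle|^2 + \frac{2}{a_2}f|\nabla^\perp\nu_1|^2$. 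For piece (iii), an analogous Young's inequality with parameter $a_3$ pairing $|\mathring{h}||\nabla^\perp\nu_1|$ against $|H|^{-1}|\nabla|H|||A^-|$ yields $2a_3|H|^{-2}|\nabla|H||^2|A^-|^2 + \frac{2}{a_3}|\mathring{h}|^2|\nabla^\perp\nu_1|^2$. I then feed in the Kato-type estimate \eqref{4.21naff}, $|\nabla|H||^2\le \frac{n(n+2)}{2(n-1)}|\langle\nabla^\perp\mathring{A},\nu_1\rangle|^2$, together with the lower bound $|H|^2\ge \frac{n(n-2)}{2}f$ coming from \eqref{form11}, to convert this factor into $2a_3\frac{n+2}{(n-1)(n-2)}\frac{|A^-|^2}{f}|\langle\nabla^\perp\mathring{A},\nu_1\rangle|^2 + \frac{2}{a_3}|\mathring{h}|^2|\nabla^\perp\nu_1|^2$.

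The main delicacy is the lower bound $|H|^2\ge \frac{n(n-2)}{2}f$ needed in piece (iii): \eqref{form11} only delivers this after the non-negative contributions $|A^-|^2+|\mathring{h}|^2$ absorb the residual $-2\sqrt{n}\,\bar K$ produced by the quartic pinching. This is automatic in the high-curvature regime in which the estimate will be invoked, and harmless in the blow up limit where the ambient $\bar K$ contribution vanishes; outside this regime one would need to track an additional error of order $\bar K$. Summing the three bounds for (i), (ii), (iii) then gives the claimed inequality.
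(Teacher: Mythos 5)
Your proof is correct and follows essentially the same route as the paper's. The paper proves $|Q|\le|\langle\nabla^\perp\mathring A,\nu_1\rangle|+|\langle\nabla^\perp A^-,\nu_1\rangle|+|H|^{-1}|\mathring h||\nabla|H||$ via the same triangle-inequality decomposition you make explicit, then applies Cauchy--Schwarz to $\langle A^-,\nabla^\perp\nu_1\rangle$ and Young's inequality with parameters $a_1,a_2,a_3$ (taking $a_1=1$) exactly as you do, feeding in \eqref{4.21naff} and the bound $f<\frac{2}{n(n-2)}|H|^2$ in the third term. The one point you flag -- that the inequality $|H|^2\gtrsim\frac{n(n-2)}{2}f$ coming from \eqref{form11} carries a residual $-2\sqrt n\bar K$ and is therefore only clean as $|H|\to\infty$ or in the Euclidean blow-up limit -- is indeed glossed over in the paper (which writes ``For $|H|\to\infty$, we get $f<\frac{2}{n(n-2)}|H|^2$''), so your comment is a legitimate and worthwhile clarification rather than a gap in your argument.
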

\begin{proof}
Using the definition of $Q_{ijk}$, we get
	\begin{align}\label{74.30}
	|Q| \leq|\langle\nabla^{\perp} \mathring{A}, \nu_1\rangle|+|\langle\nabla^{\perp}A^-, \nu_1\rangle|+|H|^{-1}|\mathring{h}||\nabla| H||.
	\end{align}It easily follows from the definition of $f$ and \eqref{form11} that
	\begin{align*}
	f \leq\left(c_n-\frac{1}{n}\right)|H|^2+2\sqrt{n}\bar{K} <\frac{2}{n(n-2)}|H|^2+2\sqrt{n}\bar{K}.
	\end{align*}
For $|H|\to\infty$, we get $f<\frac{2}{n(n-2)}|H|^2.$ Consequently, using \eqref{4.21naff}, we obtain
	\begin{align}\label{4.31}
	\frac{|A^-|^2}{|H|^2}|\nabla| H||^2&< \frac{n(n+2)}{2(n-1)} \frac{2}{n(n-2)} \frac{|A^-|^2}{f}|\langle\nabla^{\perp} \mathring{A}, \nu_1\rangle|^2=\frac{n+2}{(n-1)(n-2)} \frac{|A^-|^2}{f}|\langle\nabla^{\perp} \mathring{A}, \nu_1\rangle|^2.
	\end{align}Then,
	\begin{align*}
	|\langle A^-,\nabla^\bot \nu_1\rangle|^2=\sum_{i,j} \langle A^-_{ij},\nabla^\bot_i \nu_1\rangle^2 \le\sum_{i,j,k}\sum_{\beta\ge 2} (A^\beta_{ij})^2 \langle \nabla^\bot_k \nu_1,\nu_\beta\rangle^2
	\end{align*}and \eqref{74.30} gives
	\begin{align*}
	4\sum_{i,j,k} Q_{i j k}\langle A^-_{i j}, \nabla_k^{\perp} \nu_1\rangle&\leq 4|Q||\langle A^-, \nabla^{\perp} \nu_1\rangle| \\
	&\leq 4\left(|\langle\nabla^{\perp}\mathring{A}, \nu_1\rangle|+|\langle\nabla^{\perp} A^-, \nu_1\rangle|+|H|^{-1}|\mathring{h} || \nabla| H||\right)|A^-||\nabla^{\perp} \nu_1|.
	\end{align*}Now to each of these three summed terms above we apply Young's inequality with constants $a_1, a_2, a_3>0$. Specifically, we have
	\begin{align*}
	&4|\langle\nabla^{\perp} A^-, \nu_1\rangle||A^-||\nabla^{\perp} \nu_1|\leq 2 a_1|\langle\nabla^{\perp} A^-, \nu_1\rangle|^2+\frac{2}{a_1}|A^-|^2|\nabla^{\perp} \nu_1|^2, \\
	&4|\langle\nabla^{\perp} \mathring{A}, \nu_1\rangle||A^-||\nabla^{\perp} \nu_1|=4|\langle\nabla^{\perp} \mathring{A}, \nu_1\rangle| \frac{|A^-|}{\sqrt{f}} f^{\frac{1}{2}}|\nabla^{\perp} \nu_1| \leq 2 a_2 \frac{|A^-|^2}{f}|\langle\nabla^{\perp} \mathring{A}, \nu_1\rangle|^2+\frac{2}{a_2} f|\nabla^{\perp} \nu_1|^2, \\
	&4|H|^{-1}|\mathring{h}||\nabla| H|||A^-||\nabla^{\perp} \nu_1|\leq 2 a_3 \frac{|A^-|^2}{|H|^2}\left|\nabla| H|\right|^2+\frac{2}{a_3}|\mathring{h}|^2 |\nabla^{\perp} \nu_1|^2 \\
	&\quad\quad\quad\quad\quad\quad\quad\quad\quad\quad\quad\quad\leq 2 a_3\frac{n+2}{(n-1)(n-2)} \frac{|A^-|^2}{f}|\langle\nabla^{\perp} \mathring{A}, \nu_1\rangle|^2+\frac{2}{a_3}|\mathring{h}|^2|\nabla^{\perp} \nu_1|^2.
	\end{align*}Note we used \eqref{4.31} in the last inequality. Hence,
	\begin{align*}%\label{74.32}
	4 \sum_{i,j,k}Q_{i j k}\langle A^-_{i j}, \nabla_k^{\perp} \nu_1\rangle&\leq 2 a_1|\langle\nabla^{\perp} A^-, \nu_1\rangle|^2+\left(2 a_2+2 a_3\frac{n+2}{(n-1)(n-2)}\right) \frac{|A^-|^2}{f}|\langle\nabla^{\perp} \mathring{A}, \nu_1\rangle|^2 \nonumber\\
	&+\frac{2}{a_1}|A^-|^2|\nabla^{\perp} \nu_1|^2+\frac{2}{a_2} f|\nabla^{\perp} \nu_1|^2+\frac{2}{a_3}|\mathring{h}|^2|\nabla^{\perp} \nu_1|^2.
	\end{align*}
Setting $\alpha_1=1$ and keeping $\alpha_2$ and $\alpha_3$ as they are for now, we get the desired result.
%Now set
%	\begin{align*}
%	a_1=1, \ \ \ a_2=\frac{2(m-3)}{3(m-1)}, \ \ \ a_3=\frac{m+3}{m+2}.
%	\end{align*}In this case, since $m\ge 4\tilde{k}$, we have
%	\begin{align*}
%	2 a_2+2 a_3 \frac{9\tilde{k}(m+2)}{(7m-18)(m-\tilde{k})}&=\frac{4(m-3)}{3(m-1)}+\frac{2(m+3)}{m+2} \frac{9\tilde{k}(m+2)}{(7m-18)(m-\tilde{k})}\\
	%	&=\frac{4(m-3)}{3(m-1)}+\frac{18\tilde{k}(m+3)}{(7m-18)(m-\tilde{k})},\\
	%	\frac{2}{a_2}&=\frac{3(m-1)}{m-3}, \\
	%	\frac{2}{a_3}&=\frac{2(m+2)}{m+3}.
%	\end{align*}Plugging these into \eqref{74.32}, we have the inequality as claimed.
\end{proof}
Finally, combining the conclusions of Lemma \ref{74.6}, \ref{74.7} and \ref{74.8}, we get the following lemma.
\begin{lemma}[Gradient term estimate]\label{74.9} \ \newline
If $\frac{1}{n}<\tilde{c}_n \leq \frac{3}{n+2}$, $\frac{1}{n}<c_n\le\frac{1}{n-2}$, $0<\delta\le\frac{n^3-11n^2+24n+4}{n^3-7n^2+8n+4}$ and $n\ge6$, then
	\begin{align*}
	&4 \sum_{i,j,k}Q_{i j k}\left\langle A^-_{i j}, \nabla_k^{\perp} \nu_1\right\rangle\le2|\nabla^{\perp} A^-|^2+2(1-\delta)\frac{|A^-|^2}{f}\left(|\nabla^{\perp} A|^2-\tilde{c}_n|\nabla^{\perp} H|^2\right)\\
	&+\frac{(n-1)(n-2)}{n+2}2\sqrt{n}\bar{K}|\nabla^\bot\nu_1|^2+(1-\delta)(n-2)\frac{|A^-|^2}{f}2\sqrt{n}\bar{K}|\nabla^{\perp} \nu_1|^2.
	\end{align*}\end{lemma}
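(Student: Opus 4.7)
The plan is to combine the three preceding lemmas directly, matching term by term. I start from Lemma \ref{74.8}, which bounds the left-hand side
$$4\sum_{i,j,k}Q_{ijk}\langle A^-_{ij},\nabla^\perp_k\nu_1\rangle$$
by a sum of six contributions: a $2|\langle\nabla^\perp A^-,\nu_1\rangle|^2$ piece, a mixed term $(2a_2+2a_3\tfrac{n+2}{(n-1)(n-2)})\tfrac{|A^-|^2}{f}|\langle\nabla^\perp\mathring{A},\nu_1\rangle|^2$, and the four ``background'' contributions $2|A^-|^2|\nabla^\perp\nu_1|^2$, $\tfrac{2}{a_2}f|\nabla^\perp\nu_1|^2$, $\tfrac{2}{a_3}|\mathring{h}|^2|\nabla^\perp\nu_1|^2$ together with the Young constants $a_2,a_3>0$ (with $a_1=1$ fixed).

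The first routing is straightforward: by the decomposition \eqref{2.24naff}, $|\nabla^\perp A^-|^2=|\hat{\nabla}^\perp A^-|^2+|\langle\nabla^\perp A^-,\nu_1\rangle|^2$, so the term $2|\langle\nabla^\perp A^-,\nu_1\rangle|^2$ is absorbed into the $2|\nabla^\perp A^-|^2$ on the right-hand side of the claim. The reserve $2|\hat{\nabla}^\perp A^-|^2$ is then used via Lemma \ref{74.6} to dominate $\tfrac{2}{a_2}f|\nabla^\perp\nu_1|^2+\tfrac{2}{a_3}|\mathring{h}|^2|\nabla^\perp\nu_1|^2$, at the cost of transferring the negative piece $-\tfrac{(n-1)(n-2)}{n+2}\,2\sqrt{n}\bar K|\nabla^\perp\nu_1|^2$ onto the right-hand side as an additive positive term. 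This fixes the first $\bar K$-term appearing in the statement and forces
$$a_2\ge\frac{2(n+2)}{(n-1)(n-2)},\qquad a_3\ge\frac{2(n+2)}{(n-1)(n-2)-2(n+2)}=\frac{2(n+2)}{n^2-5n-2},$$
both of which are meaningful precisely when $n\ge 6$.

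Next, Lemma \ref{74.7} is used to dominate the two ``principal-direction'' terms: $(2a_2+2a_3\tfrac{n+2}{(n-1)(n-2)})\tfrac{|A^-|^2}{f}|\langle\nabla^\perp\mathring{A},\nu_1\rangle|^2$ and $2|A^-|^2|\nabla^\perp\nu_1|^2$, by the single quantity $2(1-\delta)\tfrac{|A^-|^2}{f}(|\nabla^\perp A|^2-\tilde c_n|\nabla^\perp H|^2)$. Shifting the negative $\bar K$-term produced by Lemma \ref{74.7} onto the right-hand side gives the second $\bar K$-contribution $(1-\delta)(n-2)\tfrac{|A^-|^2}{f}\,2\sqrt{n}\bar K|\nabla^\perp\nu_1|^2$ with the correct coefficient. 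The two resulting coefficient inequalities are
$$2a_2+2a_3\frac{n+2}{(n-1)(n-2)}\le(1-\delta)\frac{n+2}{n-1},\qquad 2\le(1-\delta)(n-2).$$
The second is automatic for $\delta\le 1-\tfrac{2}{n-2}$; the first, after substituting the minimal admissible values of $a_2,a_3$ chosen above, reduces by direct algebra to $(1-\delta)\ge\tfrac{4n(n-4)}{(n-2)(n^2-5n-2)}$, equivalently
$$\delta\le\frac{(n-2)(n^2-5n-2)-4n(n-4)}{(n-2)(n^2-5n-2)}=\frac{n^3-11n^2+24n+4}{n^3-7n^2+8n+4},$$
which is the hypothesis of the lemma.

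The main obstacle will be the bookkeeping of the Young constants $a_2,a_3$ and verifying that choosing them at the Lemma \ref{74.6} thresholds produces exactly the polynomial fraction appearing in the hypothesis on $\delta$. Once the substitution $a_2=\tfrac{2(n+2)}{(n-1)(n-2)}$, $a_3=\tfrac{2(n+2)}{n^2-5n-2}$ is performed and the arising ratio $(n^2-5n-2+n+2)/(n^2-5n-2)=n(n-4)/(n^2-5n-2)$ is simplified, all remaining checks are elementary, and the three lemmas assemble into the stated inequality.
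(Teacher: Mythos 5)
Your assembly of Lemmas \ref{74.6}, \ref{74.7}, \ref{74.8} matches the paper's proof: the choice $a_2=\frac{2(n+2)}{(n-1)(n-2)}$, $a_3=\frac{2(n+2)}{n^2-5n-2}$ is the same, and your reduction of the $\frac{|A^-|^2}{f}|\langle\nabla^\perp\mathring{A},\nu_1\rangle|^2$-coefficient inequality to the stated bound on $\delta$ is correct. The one genuine deviation is your treatment of the $|A^-|^2|\nabla^\perp\nu_1|^2$ coefficient: the paper matches the left-hand $2$ against the sum $\frac{(n-1)(n-2)}{n+2}+(1-\delta)(n-2)$ of the Lemma \ref{74.6} and Lemma \ref{74.7} contributions, whereas you discard the Lemma \ref{74.6} piece entirely and require the strictly stronger condition $2\le(1-\delta)(n-2)$. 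That is a valid route since it only wastes positivity, but you assert without proof that $\delta\le 1-\frac{2}{n-2}$ is ``automatic.'' This needs a check, because your condition is tighter than the paper's: one must verify that the stated hypothesis $\delta\le\frac{n^3-11n^2+24n+4}{n^3-7n^2+8n+4}$ implies $\delta\le\frac{n-4}{n-2}$. Since both denominators here are positive for $n\ge 8$ (and for $n=6,7$ the numerator $n^3-11n^2+24n+4$ is negative, so $0<\delta\le\cdots$ is vacuous and the lemma holds trivially), clearing denominators reduces the implication to
\begin{align*}
(n-4)(n^3-7n^2+8n+4)-(n^3-11n^2+24n+4)(n-2)=2(n-1)(n-2)^2\ge 0,
\end{align*}
which holds. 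With this one line supplied, your proof is complete.
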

\begin{proof}
Expanding $|\nabla^{\perp} A^-|^2$ using \eqref{2.24naff} and using the inequality of Lemma \ref{74.6}, gives us
	\begin{align*}
	2|\nabla^{\perp} A^-|^2&=2|\hat{\nabla}^{\perp} A^-|^2+2|\langle\nabla^{\perp}A^-, \nu_1\rangle|^2 \geq 2|\langle\nabla^{\perp} A^-, \nu_1\rangle|^2+\left(\frac{(n-1)(n-2)}{n+2}-2\right)|\mathring{h}|^2|\nabla^{\perp} \nu_1|^2\\
	&+\frac{(n-1)(n-2)}{n+2}(|A^-|^2+f-2\sqrt{n}\bar{K})|\nabla^{\perp} \nu_1|^2.
	\end{align*}Multiplying the result in Lemma \ref{74.7} by $(1-\delta)$, gives
	\begin{align*}
		&2(1-\delta) \frac{|A^-|^2}{f}\left(|\nabla^{\perp} A|^2-\tilde{c}_n|\nabla^{\perp} H|^2\right)\geq (1-\delta)\frac{n+2}{n-1}\frac{|A^-|^2}{f}\sum_{i,j,k}|\langle\nabla_i^{\perp} \mathring{A}_{j k}, \nu_1\rangle|^2 \\
	&+2(1-\delta)(n-2)|A^-|^2|\nabla^{\perp} \nu_1|^2-(1-\delta)(n-2)\frac{|A^-|^2}{f}2\sqrt{n}\bar{K}|\nabla^{\perp} \nu_1|^2.
	\end{align*}Putting these together, we get
	\begin{align*}
	&2|\nabla^{\perp} A^-|^2+2(1-\delta) \frac{|A^-|^2}{f}\left(|\nabla^{\perp} A|^2-\tilde{c}_n|\nabla^{\perp} H|^2\right) \geq 2|\langle\nabla^{\perp} A^-, \nu_1\rangle|^2\\
	&+\left(\frac{(n-1)(n-2)}{n+2}-2\right)|\mathring{h}|^2|\nabla^{\perp} \nu_1|^2+\frac{(n-1)(n-2)}{n+2}(|A^-|^2+f-2\sqrt{n}\bar{K})|\nabla^{\perp} \nu_1|^2\\
	&+(1-\delta)\frac{n+2}{n-1}\frac{|A^-|^2}{f}\sum_{i,j,k}|\langle\nabla_i^{\perp} \mathring{A}_{j k}, \nu_1\rangle|^2 +2(1-\delta)(n-2)|A^-|^2|\nabla^{\perp} \nu_1|^2\\
	&-(1-\delta)(n-2)\frac{|A^-|^2}{f}2\sqrt{n}\bar{K}|\nabla^{\perp} \nu_1|^2.
	\end{align*}
On the other hand, the first result of Lemma \ref{74.8} gives us that
	\begin{align*}
	4 \sum_{i,j,k}Q_{i j k}\langle A^-_{i j}, \nabla_k^{\perp} \nu_1\rangle&\leq 2 |\langle\nabla^{\perp} A^-, \nu_1\rangle|^2+\left(2 a_2+2 a_3\frac{n+2}{(n-1)(n-2)}\right) \frac{|A^-|^2}{f}|\langle\nabla^{\perp} \mathring{A}, \nu_1\rangle|^2 \nonumber\\
	&+2|A^-|^2|\nabla^{\perp} \nu_1|^2+\frac{2}{a_2} f|\nabla^{\perp} \nu_1|^2+\frac{2}{a_3}|\mathring{h}|^2|\nabla^{\perp} \nu_1|^2.
	\end{align*}
Therefore, it only remains to compare the coefficients of like terms in the two inequalities above. For the term $|A^-|^2|\nabla^\bot \nu_1|^2$, we have
	\begin{align}\label{delta1}
	2\le\frac{(n-1)(n-2)}{n+2}+2(1-\delta)(n-2)\Longleftrightarrow \delta\le\frac{3n^2-5n-10}{2(n-2)(n+2)}.
	\end{align}
For $\alpha_2$ and $\alpha_3$, we need at least
\begin{align*}
	\frac{2}{\alpha_3}=\frac{(n-1)(n-2)}{n+2}-2&\Longleftrightarrow \alpha_3=\frac{2(n+2)}{(n-1)(n-2)-2(n+2)},\\
	\frac{2}{\alpha_2}=\frac{(n-1)(n-2)}{n+2}&\Longleftrightarrow \alpha_2=\frac{2(n+2)}{(n-1)(n-2)}.
	\end{align*}
Using these values for $\alpha_2$ and $\alpha_3$, for the coefficients of $\frac{|A^-|^2}{f}|\langle\nabla^{\perp} \mathring{A}, \nu_1\rangle|^2$, we need
	\begin{align}\label{delta2}
	&2 a_2+2 a_3\frac{n+2}{(n-1)(n-2)}\le(1-\delta)\frac{n+2}{n-1}\nonumber\\
	&\Longleftrightarrow\frac{4(n+2)}{(n-1)(n-2)}+\frac{4(n+2)^2}{\left((n-1)(n-2)-2(n+2)\right)(n-1)(n-2)}\le(1-\delta)\frac{n+2}{n-1}\nonumber\\
	&\Longleftrightarrow\frac{4\left((n-1)(n-2)-2(n+2)\right)+4(n+2)}{(n-2)\left((n-1)(n-2)-2(n+2)\right)}\le(1-\delta)\nonumber\\
	&\Longleftrightarrow\delta\le\frac{n^3-11n^2+24n+4}{n^3-7n^2+8n+4}.
	\end{align}
%	\begin{align}%\label{delta3}
%	\delta\le1-\frac{4}{n-4\kappa},
%	\end{align}
Also, from \eqref{delta1} and \eqref{delta2}, we see that
	\begin{align*}
	&\frac{n^3-11n^2+24n+4}{n^3-7n^2+8n+4}\le\frac{3n^2-5n-10}{2(n-2)(n+2)}\Longleftrightarrow  n\ge6.
	\end{align*}
All in all, we need $n\ge 6$, for all the inequalities to hold at the same time.
\end{proof}
Let $\delta$ be sufficiently small so that each of our above calculations hold. We begin by splitting off the desired nonpositive term in the evolution equation.
	\begin{align*}
	&\left(\partial_t-\Delta\right)\frac{|A^-|^2}{f}=\frac{1}{f}\left(\partial_t-\Delta\right)|A^-|^2-|A^-|^2\frac{1}{f^2}\left(\partial_t-\Delta\right)f+2\left\langle\nabla\frac{|A^-|^2}{f},\nabla\log f\right\rangle\\
	&=2\left\langle\nabla\frac{|A^-|^2}{f},\nabla\log f\right\rangle-\delta\frac{|A^-|^2}{f^2}\left(\partial_t-\Delta\right)f+\frac{1}{f}\left(\partial_t-\Delta\right)|A^-|^2-(1-\delta)\frac{|A^-|^2}{f^2}\left(\partial_t-\Delta\right)f.
	\end{align*}
Using the previous calculations and \eqref{evolutionequation}, we have the following estimate.
	\begin{align*}
	&\frac{1}{f}\left(\partial_t-\Delta\right)|A^-|^2-(1-\delta)|A^-|^2\frac{1}{f^2}\left(\partial_t-\Delta\right)f\\
	&<\frac{2}{f}\left( 1+(1-\delta)\frac{|A^-|^2}{f}\right)\sum_{\alpha,\beta\ge2}\left(\sum_{i,j}|\bar{R}_{ij\alpha\beta}|^2+4|\bar{R}_{ij\alpha\beta}||A^-|^2\right) \\
	&+\frac{2}{f}\left( 1+(1-\delta)\frac{2|A^-|^2}{f}\right)\left(\sum_{i,j}|\bar{R}_{ij}(\nu_1)|^2+4|\bar{R}_{ij}(\nu_1)||\mathring{h}||A^-|\right)\\
	&+2(1-\delta)\frac{|A^-|^2}{f^2}\left(\frac{1}{nc_n-1}2\sqrt{n}\bar{K}(|A^-|^2+2f)+\frac{nc_n}{nc_n-1}|\mathring{h}|^2 2\sqrt{n}\bar{K} \right)\\
	&+\frac{2}{f}\left(\frac{(n-1)(n-2)}{n+2}2\sqrt{n}\bar{K}|\nabla^\bot\nu_1|^2\right)+(1-\delta)\frac{2}{f}\left((n-2)\frac{|A^-|^2}{f}2\sqrt{n}\bar{K}|\nabla^{\perp} \nu_1|^2\right)\\
	&<C\frac{|A^-|^2}{f}+C',
	\end{align*}
where $C,C'$ depend on the background curvature $\bar{R}$, the dimension of the submanifold $n$, $\bar{K}$ and $c_n$.
Thus, according to our previous calculations, and using Young's inequality, we get \eqref{initialclaim}, which was our initial claim:
	\begin{align*}
	\left(\partial_t-\Delta\right)\frac{|A^-|^2}{f}&<2\left\langle\nabla\frac{|A^-|^2}{f},\nabla\log f\right\rangle-\delta\frac{|A^-|^2}{f^2}\left(\partial_t-\Delta\right)f+C\frac{|A^-|^2}{f}+C'.
	\end{align*}                                                                     
%	\begin{align*}	
%	\left(\partial_t-\Delta\right)\frac{|A^-|^2}{f}\le2\left\langle\nabla\frac{|A^-|^2}{f},\nabla\log f\right\rangle-2n\bar{K}\frac{|A^-|^2}{f}-\delta\frac{|A^-|^2}{f^2}\left(\partial_t-\Delta\right)f.
%	\end{align*}
%Note this shows
%	\begin{align*}
%	\left(\partial_t-\Delta\right)\frac{|A^-|^2}{f}\le2\left\langle\nabla\frac{|A^-|^2}{f},\nabla\log f\right\rangle-2n\bar{K}\frac{|A^-|^2}{f}.
%	\end{align*}

\begin{theorem}[Codimension estimate, cf.\cite{HNAV}, Theorem 5.12]\label{blowuptheorem}
Let $F: \mathcal{M}^n\times[0, T) \rightarrow \mathbb{S}^{n+m}\left( \frac{1}{\sqrt{\bar{K}}}\right)$ be a smooth solution to the mean curvature flow, so that $F_0(p)=F(p, 0)$ is compact and quartically pinched, where $n\ge8,m\ge2$.
Then, $\forall \e>0, \exists H_0 >0$, such that if $f \geq H_0$, then
	\begin{align*}
	|A^-|^2 \leq \e f+C_{\e},
	\end{align*}
$\forall t \in[0, T)$, where $C_\e=C_{\e}(n, m)$.
\end{theorem}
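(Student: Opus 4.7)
The plan is to prove this by contradiction via a parabolic blow-up argument, exploiting the evolution inequality derived just above the theorem, namely
\[
(\partial_t-\Delta)\frac{|A^-|^2}{f} < 2\left\langle \nabla \frac{|A^-|^2}{f},\nabla\log f\right\rangle - \delta\,\frac{|A^-|^2}{f^2}(\partial_t-\Delta)f + C\,\frac{|A^-|^2}{f} + C',
\]
together with the key fact that the constants $C,C'$ carry explicit factors of $\bar K$ and hence vanish under parabolic rescaling at a point of large mean curvature. Suppose the conclusion fails: there exists $\varepsilon_0>0$ and sequences $(p_k,t_k)\in\mathcal M\times[0,T)$ along which $|H|(p_k,t_k)\to\infty$ (equivalently, $f(p_k,t_k)\to\infty$ since by \eqref{form22} the mean curvature controls $f$ up to lower order in the pinched regime) while $|A^-|^2(p_k,t_k)-\varepsilon_0 f(p_k,t_k)\to\infty$.

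The next step is a Hamilton-style point-picking argument, using the gradient estimate of Theorem \ref{thm_gradient} to replace $(p_k,t_k)$ with points where $|H|^2$ is essentially maximal on a backward parabolic neighbourhood of fixed rescaled size. Setting $\lambda_k:=|H|(p_k,t_k)$, define the rescaled flows
\[
F^k(p,s) := \lambda_k\bigl(F(p,t_k+\lambda_k^{-2}s)-F(p_k,t_k)\bigr),
\]
which evolve by mean curvature flow in $\mathbb S^{n+m}(\lambda_k/\sqrt{\bar K})$. Since $\lambda_k\to\infty$, the ambient sectional curvature $\bar K/\lambda_k^2\to 0$, and the uniform bounds on $|A|$, $|\nabla A|$, $|\nabla^2 A|$, and $\partial_t A$ supplied by Theorem \ref{thm_gradient} and the subsequent higher-order estimate give smooth subsequential convergence (Cheeger--Gromov / Arzelà--Ascoli) on compact parabolic subsets to a smooth pointed limit flow $\widetilde{\mathcal M}_s$ in $\mathbb R^{n+m}$, satisfying the Euclidean quartic pinching $|A|^2\le \frac{|H|^2}{n-2}$, with $\widetilde{|A^-|}^2(0,0)\ge\varepsilon_0\widetilde f(0,0)>0$ at the spacetime origin, where $\widetilde f=\frac{|H|^2}{n-2}-|A|^2$.

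For the contradiction, I would pass the evolution inequality to the rescaled limit. Since $C$ and $C'$ scale as $\bar K/\lambda_k^2\to 0$, they disappear, leaving
\[
(\partial_t-\Delta)\frac{|A^-|^2}{\widetilde f} \le 2\left\langle \nabla\frac{|A^-|^2}{\widetilde f},\nabla\log\widetilde f\right\rangle - \delta\,\frac{|A^-|^2}{\widetilde f^{\,2}}(\partial_t-\Delta)\widetilde f.
\]
The computation in the proof of Theorem \ref{thm_gradient} shows $(\partial_t-\Delta)\widetilde f\ge \frac{2(n+2)}{3n}|\nabla A|^2\ge 0$ in the Euclidean limit (the $\bar K$-terms drop out), so the last term is nonpositive. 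Thus $|A^-|^2/\widetilde f$ satisfies a drift-diffusion inequality of the form $(\partial_t-\Delta)u\le \langle X,\nabla u\rangle$. By the strong maximum principle applied on the open set $\{\widetilde f>0\}$, this quantity must be constant on the connected component of $(0,0)$. Combined with the rigidity of the quartic pinching at equality in codimension $\ge 2$, this forces $|A^-|\equiv 0$ on the limit, i.e., the limit is codimension one, contradicting $|A^-|^2(0,0)/\widetilde f(0,0)\ge\varepsilon_0>0$.

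The main obstacle will be the rigorous extraction of a nontrivial complete limit flow with curvature bounded away from zero at the basepoint; the point-picking step must be carried out with care so that the rescaled second fundamental form has bounded geometry on a definite parabolic neighbourhood and $\widetilde{|A^-|}$ does not degenerate at the origin. A secondary subtlety is the application of the strong maximum principle: the drift coefficient $\nabla\log\widetilde f$ may blow up where $\widetilde f\to 0$, so one must work on the open set $\{\widetilde f>0\}$ and propagate the conclusion by connectedness and smooth extension, using the quartic pinching bound $|A|^2\le \frac{|H|^2}{n-2}$ to ensure $\widetilde f$ controls the geometry.
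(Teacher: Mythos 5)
Your overall strategy --- parabolic rescaling at a sequence of high-curvature points, passing the evolution inequality for $|A^-|^2/f$ to a Euclidean limit where the background-curvature error terms $C,C'$ vanish, and then invoking the strong maximum principle to conclude $|A^-|\equiv 0$ --- is the same as the paper's, and your observation that $C,C'$ scale with $\bar K$ and disappear in the blow-up limit is correct. However, there is a genuine gap in how you set up the contradiction, and it breaks the maximum principle step.

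You suppose the conclusion fails for some fixed $\e_0>0$ and extract a sequence with $|A^-|^2-\e_0 f\to\infty$ and $f\to\infty$. After rescaling, this only gives $|\widetilde A^-|^2(0,0)/\widetilde f(0,0)\ge\e_0$ at the basepoint; it gives no upper bound for the ratio at nearby points on the limit flow. The strong maximum principle for $(\partial_t-\Delta)u\le\langle X,\nabla u\rangle$ forces $u$ to be constant (in the backward parabolic component) only when $u$ attains an interior spacetime \emph{maximum}; merely knowing the value at one point, or a lower bound, yields nothing. Your phrase ``by the strong maximum principle applied on the open set $\{\widetilde f>0\}$, this quantity must be constant'' is therefore unjustified as written. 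The paper closes this gap by defining $\e_0$ as the \emph{infimum} of all admissible $\e$ (note the estimate does hold for $\e\ge c_n/\delta$, so this infimum is finite). Picking a sequence with $|A^-_k|^2/f_k\to\e_0$ and $f_k\to\infty$, one shows --- using the gradient estimates to keep $f$ uniformly large on a parabolic neighbourhood of rescaled size --- that $|\widetilde A^-|^2/\widetilde f\le\e$ \emph{everywhere} on the limit for every $\e>\e_0$, hence $\le\e_0$, with equality at $(0,0)$. That is the interior maximum the strong maximum principle needs. Without this step your argument stalls at a pointwise lower bound.

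Two secondary remarks. First, the Hamilton-style point-picking you propose is unnecessary here: the gradient estimate of Theorem \ref{thm_gradient} is \emph{pointwise} ($|\nabla A|^2\le\gamma_1|A|^4+\gamma_2$) and scale-invariant to leading order, so once you rescale by $f_k(p_k,t_k)$ (the paper's normalisation $\widetilde f(0,0)=1$, essentially equivalent to your $|\widetilde H(0,0)|=1$ in the high-curvature regime by \eqref{form11}--\eqref{form22}), you automatically get uniform $C^\infty$ bounds on parabolic neighbourhoods of fixed size and can apply Langer--Breuning/Arzel\`a--Ascoli directly. Second, your final appeal to ``rigidity of the quartic pinching at equality'' to force $|A^-|\equiv 0$ is vaguer than needed; once the strong maximum principle has given $|\widetilde A^-|^2/\widetilde f\equiv\mathcal C$, the paper simply substitutes this constant back into the evolution inequality to obtain $0\le-\delta\,\mathcal C\,\widetilde f^{-1}(\partial_t-\Delta)\widetilde f\le0$, and the positivity of $(\partial_t-\Delta)\widetilde f$ (coming from the Kato-type gradient estimate in Theorem \ref{thm_gradient}) then forces $\mathcal C=0$. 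You should replace your rigidity appeal with that explicit computation.
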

\begin{proof}
Since $\mathcal{M}$ is quartically bounded and from \eqref{ineqa}, there exist constants $ C,D$, such that $|A^-|^2 \leq Cf +D$. Therefore, the above estimate holds for all $\e\geq \frac{c_n}{\delta}$. Indeed, from the \eqref{ineqa}, we have $|A|^2\le a<c_n|H|^2+2\sqrt{n}\bar{K}$ and we can make a little bit more space so that
	\begin{align*}
	|A^-|^2+|A^+|^2=|A|^2\le (c_n-\delta)|H|^2-C_\delta+2\sqrt{n}\bar{K}
	\end{align*}
and therefore,
	\begin{align*}
	\delta|H|^2\le c_n|H|^2-|A|^2+2\sqrt{n}\bar{K}.
	\end{align*}
But since $|A^-|^2\le|A|^2<c_n|H|^2+2\sqrt{n}\bar{K}$, we have $\frac{\delta |A^-|^2}{c_n}\le\frac{\delta|A|^2}{c_n}<\delta|H|^2+\frac{\delta}{c_n}2\sqrt{n}\bar{K}$, so from \eqref{ineqa}, we have
	\begin{align*}
	\frac{\delta}{c_n} |A^-|^2 &< c_n |H|^2+\left(1+\frac{\delta}{c_n}\right)2\sqrt{n}\bar{K} -|A|^2\\
	&<-|A|^2+a+\left(\left(1+\frac{\delta}{c_n}\right)2\sqrt{n} -4\right)\bar{K}\\
	&=f+\left(\left(1+\frac{\delta}{c_n}\right)2\sqrt{n} -4\right)\bar{K},
	\end{align*}
which means that 
	\begin{align*}
	|A^-|^2\le\frac{c_n}{\delta}f+\frac{c_n}{\delta}\left(\left(1+\frac{\delta}{c_n}\right)2\sqrt{n} -4\right)\bar{K}\le\e f+C_\e.
	\end{align*}
Hence, let $\e_0$ denote the infimum of such $\e$ for which the estimate is true and suppose $\e_0>0$. We will prove the theorem by contradiction. Hence, let us assume that the conclusions of the theorem are not true. That is, there exists a family of mean curvature flow $\mathcal{M}_t^k$ with points $(p_k,t_k)$ such that
	\begin{align}\label{eqn_limitepsilon0}
	\lim_{k\rightarrow \infty} \frac{|A_k^-(p_k,t_k)|^2}{f_k(p_k,t_k)}= \e_0,
	\end{align}
with $\e_0>0$ and $ f_k(p_k,t_k)\rightarrow \infty$.
We perform a parabolic rescaling of $ \mathcal{M}_t^k $ in such a way that $f_k$ at $(p_k,t_k)$ becomes $1$. If we consider the exponential map $\exp_{\bar{p}}\colon T_{\bar{p}}\mathbb{S} \cong \mathbb{R}^{n+m}\to \mathbb{S}^{n+m}$ and $\gamma$ a geodesic, then for a vector $v\in T_{\bar{p}}\mathbb{S}$, then
	\begin{align*}
	\exp_{\bar{p}}(v)=\gamma_{\bar{p},\frac{v}{|v|}} (|v|), \ \ \gamma '(0)=\frac{v}{|v|} \ \ \text{ and} \ \ \gamma(0)=\bar{p}=F_k(p_k,t_k).
	\end{align*}
That is, if $F_k$ is the parameterisation of the original flow $ \mathcal{M}_t^k $, we let $ \hat r_k = \frac{1}{f_k(p_k,t_k)}$, and we denote the rescaled flow by $ \overline{\mathcal{M}}_t^k $ and we define its parameterisation by
	\begin{align*}
	\overline F_k (p,\tau) = \exp^{-1}_{F_k(p_k,t_k)} \circ F_k (p,\hat{r}^2_k \tau+t_k).
	\end{align*}
In the Riemannian case, when we change the metric after dilation, we do not need to multiply the immersion by the same constant as we would do in the Euclidean space. When we rescale the background space, we see that
	\begin{align*}
	\bar{g}_{ij}=\frac{1}{\hat{r}^2_k}g_{ij} \ \ \text{ and} \ \ \overline{K}=\hat{r}^2_k \bar{K},
	\end{align*}
where $\bar{K}$ is the sectional curvature of $\mathbb{S}^{n+m}$. In the same way,
	\begin{align*}
	|\overline{A}|^2=\hat{r}^2_k |A|^2 \ \ \text{ and} \ \ |\overline{H}|^2=\hat{r}^2_k |H|^2.
	\end{align*}
Since $d_n$ depends on $n$ and the sectional curvature $\bar{K}$, the new $\bar{d}_n$ depends on n and $\overline{K}$. Hence,
	\begin{align*}
	\bar{d}_n=\hat{r}^2_k d_n.
	\end{align*}
For $\hat{r}_k\to 0$, the background Riemannian manifold will converge to its tangent plane in a pointed $C^{d,\gamma}$ H\"older topology \cite{Petersen2016}. Therefore, we can work on the manifold $\mathbb{S}^{n+m}$ as we would work in a Euclidean space. For simplicity, we choose for every flow a local co-ordinate system centred at $ p_k$. In these co-ordinates we can write $0$ instead of $ p_k$. Recall \eqref{parneigh}. The parabolic neighbourhoods $\mathcal P^k ( p_k, t_k, \hat r_k L, \hat r_k^2 \theta)$ in the original flow becomes $ \overline{\mathcal P}^k(0,0,L,\theta)$. By construction, each rescaled flow satisfies
	\begin{align} \label{eqn_H1}
	\overline F_k (0,0) = 0, \quad \overline f_k (0,0) = 1.
	\end{align}
Indeed,
	\begin{align*}
	\overline F_k(0,0)&=\exp_{F_k(0,0)}^{-1}\circ F_k(0,\hat r_k^2 \cdot 0)=0,\\
	\overline f_k (p,\tau)&=-|\overline A_k (p,\tau)|^2+\overline{a}_k\\
	&=-|\overline A_k (p,\tau)|^2+\sqrt{\left(\frac{|\overline{H}|^2}{n-2}+4 \overline{K}\right)^2+(4n-16) \overline{K}^2}\\
	&=\hat r_k^2\Big(-|A_k(p,\hat r_k^2 \tau+t_k)|^2+\sqrt{\left(\frac{|H_k(p,\hat r_k^2 \tau+t_k)|^2}{n-2}+4 \bar{K}\right)^2+(4n-16) \bar{K}^2}\Big)\\
	&=\hat r_k^2\Big(-|A_k(p,\hat r_k^2 \tau+t_k)|^2+a_k\Big)\\
	&=\hat r_k^2 f_k(p,\hat r_k^2 \tau+t_k).
	\end{align*}

and so
	\begin{align*}
	\overline f_k (0,0)=\hat r_k^2 f_k(0,0)=1,
	\end{align*}
since $\hat r_k (0,0)=\frac{1}{f_k (0,0)}=1$ from the change of coordinates. The gradient estimates give us uniform bounds (depending only on the pinching constant) on $ |A_k|$ and its derivatives up to any order on a neighbourhood of the form $\overline{\mathcal P }^k( 0 ,0,d,d)$ for a suitable $ d > 0$. From Theorem \ref{thm_gradient}, we obtain gradient estimates on the second fundamental form in $ C^\infty $ on $ \overline F_k$. Hence we can apply Arzela-Ascoli (via the Langer-Breuning compactness theorem \cite{Breuning2015} and \cite{Langer1985}) and conclude there exists a subsequence converging in $ C^\infty $ to some limit flow which we denote by $ \widetilde{\mathcal{M}}_\tau^\infty$. We analyse the limit flow $ \widetilde{\mathcal{M}}_\tau^\infty$. Note we have for the Weingarten map
	\begin{align*}
		[\overline A_k^-]_i^j ( p , \tau) = \hat r_k [A_k^-]_i^j ( p , \hat r_k^2 \tau+t_k),
	\end{align*}
so that
	\begin{align*}
	\frac{|\overline A_k^-(p, \tau) |^2}{\overline{f}_k(p,\tau)}&= \frac{|A_k^- ( p, \hat r_k^2 \tau+t_k) |^2}{f_k(p, \hat r_k^2 \tau+t_k)}.
	\end{align*}
From \eqref{eqn_limitepsilon0} and \eqref{eqn_H1}, we see
	\begin{align*}
	\frac{| \widetilde A^-(0,0)|^2}{\widetilde f(0,0)}= \e_0, \quad \widetilde f(0,0) = 1.
	\end{align*}
We claim
	\begin{align*}
	\frac{|\widetilde A^- ( p , \tau) |^2}{\widetilde f (p, \tau)}=\lim_{k\rightarrow \infty}\frac{|\overline A_k^-(p,\tau)|^2}{\overline f_k (p,\tau)} \leq \e \quad \forall \e>\e_0.
	\end{align*}
 Since $\widetilde{f}(0,0)=1$, it follows that $|\widetilde{f}| \geq \frac{1}{2}$ in $ \widetilde{\mathcal{P}}^\infty(0,0,r,r)$ for some $r < d^\#$.
This is true since any point $( p, \tau) \in \widetilde{\mathcal{M}}^\infty_\tau$ is the limit of points $(p_{j_k},t_{j_k}) \in \overline{\mathcal{M}}^k_t$ and for every $ \e > \e_0 $ if we let $ \eta = \eta(\e,c_n)< d^\#$ then for large $k$, $\mathcal{M}^k_t$ is defined in
	\begin{align*}
	 \mathcal P^k\left(p_{j_k},t_{j_k}, \frac{1}{f_k(p_{j_k},t_{j_k})}\eta,\left(\frac{1}{f_k(p_{j_k},t_{j_k})}\right)^2 \eta\right),
	\end{align*}
which implies
	\begin{align*}
	\frac{|\overline A_k^- ( p_{j_k} , t_{j_k}) |^2}{\overline f_k (p_{j_k}, t_{j_k})} \leq \e \quad \forall \e>\e_0.
	\end{align*}
Hence the flow $\widetilde{\mathcal{M} }_t^\infty \subset \mathbb R^{n+m}$ has a space-time maximum $\e_0$ for $\frac{|\widetilde A^- ( p , \tau) |^2}{\widetilde f (p, \tau)}$ at $ (0,0)$. The evolution equation for $ \frac{|{A}^-|^2}{{f}}$ is given by 
	\begin{align*}
	\left(\partial_t-\Delta\right)\frac{| A^-|^2}{ f}&\le2\left\langle\nabla\frac{| A^-|^2}{ f},\nabla\log  f\right\rangle-\delta\frac{| A^-|^2}{ f^2}\left(\partial_t-\Delta\right) f+C\frac{| A^-|^2}{ f}+C'.
	\end{align*}
But in the limit our background space is Euclidean, therefore the background curvature tensor is identically zero. So the evolution equation becomes
	\begin{align*}
	\left(\partial_t-\Delta\right)\frac{|\widetilde A^-|^2}{\widetilde f}&\le2\left\langle\nabla\frac{|\widetilde A^-|^2}{\widetilde f},\nabla\log \widetilde f\right\rangle-\delta\frac{|\widetilde A^-|^2}{\widetilde f^2}\left(\partial_t-\Delta\right)\widetilde f.
	\end{align*}
Hence, since $\frac{|\widetilde A^-|^2}{\widetilde f}$ attains a maximum $\e_0$ at $(0,0)$ by the strong maximum principle, then there exists this constant $\mathcal{C}$ depending up to $n, \bar{K}$, such that $\frac{|\widetilde A^-|^2}{\widetilde f}=\mathcal{C}.$ Putting this into the evolution equation, we have
	\begin{align*}
	0&\le-\delta\frac{\mathcal{C}}{\widetilde f}\left(\partial_t-\Delta\right)\widetilde f\le 0,
	\end{align*}
which means that we get $\mathcal{C}=0$ and therefore, $|\widetilde A^-|=0$.
%	\begin{align*}
%	\mathcal{C}=\frac{|\widetilde A^-|^2}{\widetilde f}.
%	\end{align*}Putting this into the evolution equation and choosing an appropriate $\delta$, we have
%	\begin{align*}
%	0\le-\delta\frac{\mathcal{C}}{\widetilde f}\left(\partial_t-\Delta\right)\widetilde f+C\mathcal{C}\le 0,
%	\end{align*}
%which means that we get $\mathcal{C}=0$ and therefore, $|\widetilde A^-|=0$. 
This implies
	\begin{align*}
	\frac{|\widetilde A^-|^2}{\widetilde{f}}=0\implies \e_0=0,
	\end{align*}which is a contradiction. Hence, we obtain
	\begin{align*}
	\lim_{k\rightarrow \infty} \frac{|\overline A_k^-(p_k,t_k)|^2}{\overline f_k(p_k,t_k)} = 0.
	\end{align*}
\end{proof}
\subsection{Cylindrical estimate}
Here, we present estimates that demonstrate an improvement in curvature as we approach a singularity. These estimates play a critical role in the analysis of high curvature regions in geometric flows. In particular, in the high codimension setting, we prove that the pinching ratio $\frac{|A|^2}{|H|^2}$ approaches the ratio of the standard cylinder, which is $\frac{1}{n-1}$.
\begin{theorem}[Cylindrical estimate, cf.\cite{HuSi09}, cf.\cite{HNAV}]\label{thm_cylindrical}
Let $F: \mathcal{M}^n\times[0, T) \rightarrow \mathbb{S}^{n+m}\left(\frac{1}{\sqrt{\bar{K}}}\right)$ be a smooth solution to the mean curvature flow, so that
$F_0(p)=F(p, 0)$ is compact and quartically pinched with constant $c_n=\frac{1}{n-2}$, where $n\ge8$.
Then $\forall \e>0, \exists H_1 >0$, such that if $f \geq H_1$, then
	\begin{align*}
	|A|^2- \frac{1}{n-1}|H|^2\leq \e |H|^2+C_{\e},
	\end{align*}
$\forall t \in[0, T)$, where $C_\e=C_{\e}(n)$.
\end{theorem}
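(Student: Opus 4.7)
The plan is to argue by contradiction via a blow-up, following the scheme of Theorem \ref{blowuptheorem}. Instead of tracking $\frac{|A^-|^2}{f}$, I would track the cylindrical deficit $Q := \frac{|A|^2 - \frac{1}{n-1}|H|^2}{|H|^2}$, and close the argument by reducing to a known cylindrical estimate for two-convex codimension one mean curvature flows \cite{HuSi09}.

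Suppose for contradiction that there exists $\e_0 > 0$, a sequence of quartically pinched flows $\mathcal{M}^k_t$, and points $(p_k, t_k)$ with $f_k(p_k, t_k) \to \infty$ at which $Q_k(p_k, t_k) \to \e_0$. The quartic pinching $f \le c_n |H|^2 + 2\sqrt{n}\bar{K}$ forces $|H_k(p_k, t_k)| \to \infty$ as well. I would then parabolically rescale exactly as in the proof of Theorem \ref{blowuptheorem}, by $\hat{r}_k = 1/\sqrt{f_k(p_k, t_k)}$, normalised so that $\overline{f}_k(0,0) = 1$. The rescaled background curvature $\overline{K} = \hat{r}_k^2 \bar{K}$ tends to zero, and $\mathbb{S}^{n+m}$ flattens to $\mathbb{R}^{n+m}$ in the pointed $C^{d,\gamma}$ H\"older topology. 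The gradient estimates of Theorem \ref{thm_gradient} and their higher-order analogues give uniform $C^\infty$ bounds on any fixed parabolic neighbourhood, and the Langer--Breuning compactness theorem extracts a smooth subsequential limit flow $\widetilde{\mathcal{M}}^\infty_\tau \subset \mathbb{R}^{n+m}$ with $\widetilde{f}(0,0) = 1$, $|\widetilde{H}(0,0)|^2 = n-2$ (from the scaling relation $|\overline{H}_k|^2 = \hat{r}_k^2 |H_k|^2$ combined with $|H_k|^2 \sim (n-2) f_k$), and $\widetilde{Q}(0,0) = \e_0$.

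The heart of the argument is the identification of $\widetilde{\mathcal{M}}^\infty_\tau$. Applying Theorem \ref{blowuptheorem} along the rescaled sequence yields $|\overline{A}^-_k|^2 \le \e \overline{f}_k + \hat{r}_k^2 C_\e$ in a fixed neighbourhood of the origin; the additive constant is crushed in the limit, and sending $\e \to 0$ forces $|\widetilde{A}^-| \equiv 0$, so $\widetilde{\mathcal{M}}^\infty_\tau$ lies in a fixed affine $\mathbb{R}^{n+1} \subset \mathbb{R}^{n+m}$ and evolves as a codimension one mean curvature flow. The quartic pinching $|A|^2 \le a(|H|^2) - \e\omega$ passes to the limit with the $\bar{K}$-dependent terms in $a$ vanishing after rescaling, giving the Euclidean quadratic pinching $|\widetilde{A}|^2 \le \tfrac{|\widetilde{H}|^2}{n-2}$, which is a strictly two-convex pinching for $n \ge 4$. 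At this point one invokes the Huisken--Sinestrari cylindrical estimate \cite{HuSi09} on the two-convex codimension one flow $\widetilde{\mathcal{M}}^\infty_\tau$; alternatively, one writes down the evolution of $\widetilde{Q}$ directly: in the strictly two-convex Euclidean codimension one setting, the reaction terms of $(\partial_\tau - \Delta)\widetilde{Q}$ are non-positive at an interior spacetime maximum, and the strong maximum principle forces $\widetilde{Q}$ to be locally constant, which by rigidity identifies $\widetilde{\mathcal{M}}^\infty_\tau$ with a shrinking cylinder $\mathbb{S}^{n-1}(r) \times \mathbb{R}$, on which $Q \equiv 0$, a contradiction to $\widetilde{Q}(0,0) = \e_0 > 0$.

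The main obstacle is closing the final contradiction cleanly once the blow-up limit has been identified as codimension one and two-convex. At $(0,0)$ the mean curvature $|\widetilde{H}|$ is only bounded, not large, so the Huisken--Sinestrari estimate $\widetilde{Q} \le \e + C_\e/|\widetilde{H}|^2$ applied at a single point leaves a constant which must be absorbed either by a secondary blow-up (rescaling the limit flow further by $|\widetilde{H}|$ at distant points) or by invoking a rigidity statement for ancient or parabolic two-convex solutions on which $Q$ attains a positive interior maximum. A secondary technical point is propagating the conclusion $|\widetilde{A}^-|(0,0) = 0$ to all of $\widetilde{\mathcal{M}}^\infty_\tau$, so that the codimension one ambient is valid globally; this is done exactly as in the concluding lines of Theorem \ref{blowuptheorem}, applying the strong maximum principle to the evolution inequality for $|\widetilde{A}^-|^2$.
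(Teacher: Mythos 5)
Your blow-up scheme is the same as the paper's, and your use of Theorem \ref{blowuptheorem} to kill $\widetilde{A}^-$ in the rescaled limit is correct (the additive constant $C_\e$ in the codimension estimate is indeed crushed under rescaling). But you have not closed the argument, and you say so yourself. The paper in fact does not invoke the codimension estimate at all: it applies the strong maximum principle directly to the evolution of $\frac{|A|^2}{|H|^2}$ in full codimension, using the estimate $R_1 - c\,R_2 \le 0$ from Lemma 2.3 of \cite{HTNsurgery} for the reaction term and the Kato-type inequality $|\nabla A|^2 \ge \tfrac{3}{n+2}|\nabla H|^2$ for the Bochner term. At a spacetime maximum of $\frac{|A|^2}{|H|^2}$ the strong maximum principle forces the ratio to be constant; plugging back into the (now vanishing) evolution equation forces both $|\nabla A| \equiv 0$ and $|A^-| \equiv 0$ simultaneously, after which Lawson's local rigidity \cite{Lawson1969} for hypersurfaces with parallel second fundamental form pins the limit down to $\mathbb{S}^n$ or $\mathbb{S}^{n-1}\times\mathbb{R}$, neither of which has $\frac{|A|^2}{|H|^2} = \frac{1}{n-1} + \e_0$. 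So the codimension reduction falls out of the same maximum principle, rather than being a preliminary step.

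On your two alternatives for finishing: invoking \cite{HuSi09} directly does not quite work, since the Huisken--Sinestrari cylindrical estimate is stated for flows of closed two-convex hypersurfaces, while $\widetilde{\mathcal{M}}^\infty_\tau$ is an ancient, typically non-compact blow-up limit; you would need an additional argument (secondary blow-up or an ancient-solution classification) that you gesture at but do not carry out. Your alternative (b) is closer to the paper's route, but the justification is misplaced: in codimension one the reaction term $R_1 - c\,R_2 = |A|^2(|A|^2 - c|H|^2)$ vanishes identically at the maximum rather than being strictly negative, so the non-positivity of the reaction term by itself gives nothing. The step you omit is the essential one: the gradient term $-\frac{2}{|H|^2}\bigl(|\nabla A|^2 - c|\nabla H|^2\bigr)$ is non-positive by Kato together with $c < \tfrac{1}{n-2} < \tfrac{3}{n+2}$ for $n\ge 5$; once the strong maximum principle forces the ratio to be constant, this term must vanish, which forces $\nabla A \equiv 0$, and only then does Lawson's classification apply. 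The phrase ``which by rigidity identifies the limit with a shrinking cylinder'' is exactly where the actual work lies, and it is not written out. So the proposal has the right skeleton but a genuine gap at the final classification step.
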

\begin{proof}
The proof follows closely the proof of Theorem \ref{blowuptheorem}. From \eqref{ineqa}, we have
	\begin{align*}
	&|A|^2\le a<\frac{1}{n-2}|H|^2+2\sqrt{n}\bar{K}
	\end{align*}
and therefore, there exist constants $ C,D$ such that
	\begin{align*}
	|A|^2- \frac{1}{n-1}|H|^2 \leq C|H|^2 +D.
	\end{align*}
Hence, let $\e_0$ denote the infimum of such $\e$, for which the estimate is true and suppose $\e_0>0$. We will prove the theorem by contradiction. Hence, let us assume that the conclusions of the theorem are not true. That is, there exists a family of mean curvature flow $\mathcal{M}_t^k$ with points $(p_k,t_k)$, such that
	\begin{align}\label{eqn_limitepsilon0versiontwo}
	\lim_{k\rightarrow \infty} \frac{\left (|A_k(p_k,t_k)|^2- \frac{1}{n-1}|H_k(p_k,t_k)|^2\right )}{|H_k(p_k,t_k)|^2}= \e_0,
	\end{align}
with $\e_0>0$ and $|H_k(p_k,t_k)|^2\rightarrow \infty$. We perform a parabolic rescaling of $ \mathcal{M}_t^k $ exactly as in Theorem \ref{blowuptheorem}, which is in such a way that $|H_k|^2$ at $(p_k,t_k)$ becomes $1$. If we consider the exponential map $\exp_{\bar{p}}\colon T_{\bar{p}}\mathbb{S} \cong \mathbb{R}^{n+m}\to \mathbb{S}^{n+m}$ and $\gamma$ a geodesic, then for a vector $v\in T_{\bar{p}}\mathbb{S}$, we have
	\begin{align*}
	\exp_{\bar{p}}(v)=\gamma_{\bar{p},\frac{v}{|v|}} (|v|), \ \ \gamma '(0)=\frac{v}{|v|} \ \ \text{ and} \ \ \gamma(0)=\bar{p}=F_k(p_k,t_k).
	\end{align*}
That is, if $F_k$ is the parameterisation of the original flow $ \mathcal{M}_t^k $, we let $ \hat r_k = \frac{1}{|H_k(p_k,t_k)|^2}$, and we denote the rescaled flow by $ \overline{\mathcal{M}}_t^k $ and we define its parameterisation by
	\begin{align*}
	\overline F_k (p,\tau) = \exp^{-1}_{F_k(p_k,t_k)} \circ F_k (p,\hat{r}^2_k \tau+t_k).
	\end{align*}
%In the Riemannian case, when we change the metric after dilation, we do not need to multiply the immersion by the same constant as we would do in the Euclidean space. When we rescale the background space, following the example of the dilation of a sphere, we see that
%	\begin{align*}
%	\bar{g}_{ij}=\frac{1}{\hat{r}^2_k}g_{ij} \ \ \text{ and} \ \ \overline{K}=\hat{r}^2_k K,
%	\end{align*}
%where $K$ is the sectional curvature of $\mathcal{N}$. In the same way,
%	\begin{align*}
%	|\overline{A}|^2=\hat{r}^2_k |A|^2 \ \ \text{ and} \ \ |\overline{H}|^2=\hat{r}^2_k |H|^2.
%	\end{align*}
%Since $d_n$ depends on $n$ and the sectional curvature $K$, the new $\bar{d}_n$ depends on n and $\overline{K}$. Hence,
%	\begin{align*}
%	\bar{d}_n=\hat{r}^2_k d_n.
%	\end{align*}
For $\hat{r}_k\to 0$, the background Riemannian manifold will converge to its tangent plane in a pointed $C^{d,\gamma}$ H\"older topology \cite{Petersen2016}. Therefore, we can work on the manifold $\mathbb{S}$ as we would work in a Euclidean space. For simplicity, we choose for every flow a local co-ordinate system centred at $ p_k$. In these co-ordinates we can write $0$ instead of $ p_k$. The parabolic neighbourhoods $\mathcal P^k ( p_k, t_k, \hat r_k L, \hat r_k^2 \theta)$ in the original flow becomes $ \overline{\mathcal P}^k(0,0,L,\theta)$. By construction, each rescaled flow satisfies
	\begin{align} \label{eqn_H1version2}
	\overline F_k (0,0) = 0, \quad |\overline H_k (0,0)|^2 = 1.
	\end{align}
%Indeed,
%	\begin{align*}
%	\overline F_k(0,0)=\exp_{F_k(0,0)}^{-1}\circ F_k(0,\hat r_k^2 \cdot 0)=0
%	\end{align*}
%and
%	\begin{align*}
%	\overline f_k (p,\tau)&=-|\overline A_k (p,\tau)|^2+c_n|\overline{H}_k (p,\tau)|^2-\bar{d}_n\\
%	&=\hat r_k^2\Big(-|A_k(p,\hat r_k^2 \tau+t_k)|^2+c_n|H_k(p,\hat r_k^2 \tau+t_k)|^2-d_n\Big)\\
%	&=\hat r_k^2 f_k(p,\hat r_k^2 \tau+t_k)
%	\end{align*}
%and so
%	\begin{align*}
%	\overline f_k (0,0)=\hat r_k^2 f_k(0,0)=1,
%	\end{align*}
%since $\hat r_k (0,0)=\frac{1}{f_k (0,0)}=1$ from the change of coordinates.
The gradient estimates give us uniform bounds (depending only on the pinching constant) on $ |A_k|$ and its derivatives up to any order on a neighbourhood of the form $\overline{\mathcal P }^k( 0 ,0,d,d)$ for a suitable $ d > 0$. From Theorem \eqref{thm_gradient}, we obtain gradient estimates on the second fundamental form in $ C^\infty $ on $ \overline F_k$. Hence we can apply Arzela-Ascoli (via the Langer-Breuning compactness theorem \cite{Breuning2015} and \cite{Langer1985}) and conclude there exists a subsequence converging in $ C^\infty $ to some limit flow which we denote by $ \widetilde{\mathcal{M}}_\tau^\infty$. 
%We analyse the limit flow $ \widetilde{\mathcal{M}}_\tau^\infty$. Note we have for the Weingarten map
%	\begin{align*}
%		[\overline A_k^-]_i^j ( p , \tau) = \hat r_k [A_k^-]_i^j ( p , \hat r_k^2 \tau+t_k),
%	\end{align*}
%so that
%	\begin{align*}
%	\frac{|\overline A_k(p, \tau) |^2-\frac{1}{n-1}|\overline H_k(p,\tau)|^2}{\overline{f}_k(p,\tau)}&= \frac{|A_k ( p, \hat r_k^2 \tau+t_k) |^2-\frac{1}{n-1}|H_k ( p, \hat r_k^2 \tau+t_k) |^2}{f_k(p, \hat r_k^2 \tau+t_k)}.
%	\end{align*}
From \eqref{eqn_limitepsilon0versiontwo} and \eqref{eqn_H1version2}, we see
	\begin{align*}
	\frac{| \widetilde A(0,0)|^2-\frac{1}{n-1}|\widetilde H(0,0)|^2}{|\widetilde H(0,0)|^2}= \e_0, \quad |\widetilde H(0,0)|^2= 1.
	\end{align*}
We claim
	\begin{align*}
	\frac{|\widetilde A( p , \tau) |^2-\frac{1}{n-1}|\widetilde H ( p , \tau) |^2}{|\widetilde H (p, \tau)|^2}=\lim_{k\rightarrow \infty}\frac{|\overline A_k(p,\tau)|^2-\frac{1}{n-1}|\overline H_k(p,\tau)|^2}{|\overline H_k (p,\tau)|^2} \leq \e \quad \forall \e>\e_0.
	\end{align*}
 Since $|\widetilde{H}(0,0)|^2=1$, it follows that $|\widetilde{H}|^2 \geq \frac{1}{2}$ in $ \widetilde{\mathcal{P}}^\infty(0,0,r,r)$ for some $r < d^\#$.
This is true since any point $( p, \tau) \in \widetilde{\mathcal{M}}^\infty_\tau$ is the limit of points $(p_{j_k},t_{j_k}) \in \overline{\mathcal{M}}^k_t$ and for every $ \e > \e_0 $ if we let $ \eta = \eta(\e,c_n)< d^\#$ then for large $k$, $\mathcal{M}^k_t$ is defined in
	\begin{align*}
	 \mathcal P^k\left(p_{j_k},t_{j_k}, \frac{1}{|H_k(p_{j_k},t_{j_k})|^2}\eta,\left(\frac{1}{|H_k(p_{j_k},t_{j_k})|^2}\right)^2 \eta\right),
	\end{align*}
which implies
	\begin{align*}
	\frac{|\overline A_k ( p_{j_k} , t_{j_k}) |^2-\frac{1}{n-1}|\overline H_k( p_{j_k} , t_{j_k}) |^2}{|\overline H_k (p_{j_k}, t_{j_k})|^2} \leq \e \quad \forall \e>\e_0.
	\end{align*}
Hence, the flow $\widetilde{\mathcal{M} }_t^\infty \subset \mathbb R^{n+m}$ has a space-time maximum $\e_0$ for $\frac{|\widetilde A( p , \tau) |^2-\frac{1}{n-1}|\widetilde H ( p , \tau) |^2}{|\widetilde H (p, \tau)|^2}$ at $ (0,0)$, which implies that the flow $\widetilde{\mathcal M }_t^\infty$ has a space-time maximum $\frac{1}{n-1}+\e_0$ for $\frac{|\widetilde A ( p , \tau) |^2}{| \widetilde H (p, \tau) |^2}$ at $ (0,0)$. Since the evolution equation for $ \frac{|A|^2}{|H|^2}$ is given by
	\begin{align*}
	\Big(\partial_t -\Delta\Big) \frac{|A|^2}{|H|^2}&= \frac{2}{|H|^2}\left\la \nabla |H|^2 , \nabla \left( \frac{|A|^2}{|H|^2}\right) \right\ra-\frac{2}{|H|^2} \left( |\nabla A|^2-\frac{|A|^2}{|H|^2}|\nabla H|^2 \right) \\
	&+\frac{2}{|H|^2}\left( R_1-\frac{|A|^2}{|H|^2} R_2\right)+4\bar{K}\left(1-n\frac{|A|^2}{|H|^2}\right)
	\end{align*}
and knowing that $\frac{3}{n+2}|\nabla H|^2 \leq |\nabla A|^2$ and $\frac{|A|^2}{|H|^2}\leq c_n,$ we arrive at
	\begin{align*}
	-\frac{2}{|H|^2} \left( |\nabla A|^2-\frac{|A|^2}{|H|^2}|\nabla H|^2 \right) \leq 0 \ \ \text{and} \ \ 4\bar{K}\left(1-n\frac{|A|^2}{|H|^2}\right)<0.
	\end{align*}
Furthermore, if $\frac{|A|^2}{|H|^2}=c < c_n$, according to Lemma 2.3 in \cite{HTNsurgery} and Proposition \ref{preservation}, we have
	\begin{align*}
	 R_1&-\frac{|A|^2}{|H|^2} R_2= R_1-c R_2\\
	 &\leq \frac{2}{n}\frac{1}{c-\nicefrac{1}{n}}| A^-|^2f+\left(6-\frac{2}{n (c-\nicefrac{1}{n})} \right) |\circo h|^2 | \circo A^-|^2+\left(3-\frac{2}{n (c-\nicefrac{1}{n})} \right)|\circo A^-|^4+2f|A^+|^2\\
	&\leq 0.
	\end{align*}
Hence, the strong maximum principle applies to the evolution equation of $\frac{|A|^2}{|H|^2}$ and shows that $\frac{|A|^2}{|H|^2}$ is constant. The evolution equation then shows $ |\nabla A|^2= 0$, that is the second fundamental form is parallel and that $|A^-|^2 = |\circo A^-|^2=0$, that is the submanifold is codimension one. Finally, this shows locally $ \mathcal{M} = \mathbb S^{n-q}\times \mathbb R^q$, \cite{Lawson1969}. As $\frac{|A|^2}{|H|^2}< c_n\leq \frac{1}{n-2}$, we can only have
	\begin{align*}
	\mathbb S^n, \mathbb S^{n-1}\times \mathbb R,
	\end{align*}
which gives $\frac{|A|^2}{|H|^2}= \frac{1}{n}, \frac{1}{n-1}\neq \frac{1}{n-1}+\e_0, \e_0>0$, which gives a contradiction.
\end{proof}
%As above, following the same proof for $\tilde{f}:=-|A|^2+b-\varepsilon\omega$, from \eqref{b(x)}, we have the following estimate.
%\begin{theorem}[Cylindrical estimate, cf.\cite{HuSi09}, cf.\cite{HNAV}]\label{thm_cylindricalb}
%Let $F: \mathcal{M}^n\times[0, T) \rightarrow \mathbb{S}^{n+m}\left(\frac{1}{\sqrt{\bar{K}}}\right)$ be a smooth solution to mean curvature flow so that
%$F_0(p)=F(p, 0)$ is compact and $|A|^2\le b-\varepsilon\omega$ with constant $c_n=\frac{1}{n-2}$, where $n\ge5$.
%Then $\forall \e>0, \exists H_1 >0$, such that if $\tilde{f} \geq H_1$, then
%	\begin{align*}
%	|A|^2- \frac{1}{n-1}|H|^2\leq \e \tilde{f}+C_{\e},
%	\end{align*}
%$\forall t \in[0, T)$, where $C_\e=C_{\e}(n)$.
%\end{theorem}
\section{Singularity models of quartically pinched solutions of mean curvature flow in higher codimension}
In this subsection, we derive a corollary from Theorem \ref{blowuptheorem}, which provides information about the blow up models at the first singular time. Specifically, we show that these models can be classified up to homothety.
\begin{corollary}[{\cite[Corollary 1.4]{Naff6}}]\label{resultnaff} Let $n \geq 5$ and $N>n$. Let $c_n=\frac{1}{n-2}$ if $n \geq 8$ and $c_n=\frac{3(n+1)}{2 n(n+2)}$ if $n=5,6$, or 7 . Consider a closed, n-dimensional solution to the mean curvature flow in $\mathbb{R}^N$ initially satisfying $|H|>0$ and $|A|^2<c_n|H|^2$. At the first singular time, the only possible blow-up limits are codimension one shrinking round spheres, shrinking round cylinders and translating bowl solitons.
\end{corollary}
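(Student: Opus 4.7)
The plan is to take a blow-up sequence at the first singular time $T$ and use the codimension estimate (Theorem \ref{blowuptheorem}) together with the cylindrical estimate (Theorem \ref{thm_cylindrical}) to reduce the problem to the known classification in the codimension-one mean convex setting. Choose a sequence $(p_k,t_k)$ with $t_k \nearrow T$ and $|A|^2(p_k,t_k)=:Q_k \to \infty$. Setting $\hat r_k = Q_k^{-1/2}$, define rescaled flows $\overline F_k(p,\tau) = \hat r_k^{-1}\bigl(F_k(p,\hat r_k^2\tau+t_k)-F_k(p_k,t_k)\bigr)$, which are complete smooth solutions of mean curvature flow in $\mathbb{R}^N$ with $|\overline A_k|^2(0,0)=1$. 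The gradient estimate from Theorem \ref{thm_gradient} (together with its higher-order analogs and the time derivative bound remarked after it) provides uniform $C^\infty$ bounds on the rescaled immersions on parabolic neighborhoods, so by the Langer--Breuning compactness theorem we may extract a subsequential smooth limit $\widetilde{\mathcal{M}}^\infty_\tau$.

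Next, I would use Theorem \ref{blowuptheorem} and Theorem \ref{thm_cylindrical} to pin down the geometry of the limit. Since the initial quadratic pinching $|A|^2 < c_n|H|^2$ is preserved (this is the Euclidean analog of Proposition \ref{preservation}), and $|A|^2(p_k,t_k)\to\infty$ forces $|H|^2(p_k,t_k)\to\infty$ and hence $f_k(p_k,t_k)\to\infty$ in the original scale, the rescaled inequalities
\begin{align*}
|\overline A_k^-|^2 \leq \varepsilon \overline f_k + Q_k^{-1}C_\varepsilon,\qquad |\overline A_k|^2-\tfrac{1}{n-1}|\overline H_k|^2 \leq \varepsilon |\overline H_k|^2 + Q_k^{-1}C_\varepsilon
\end{align*}
pass to the limit to yield $|\widetilde A^-|^2 \equiv 0$ and $|\widetilde A|^2 \leq \tfrac{1}{n-1}|\widetilde H|^2$ on $\widetilde{\mathcal{M}}^\infty_\tau$. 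The first identity means the limit flow lies in an affine $(n+1)$-dimensional subspace of $\mathbb{R}^N$, so it is a codimension-one mean curvature flow, and the second is precisely the Huisken--Sinestrari cylindrical pinching condition.

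Finally, I would invoke the codimension-one theory. By the convexity estimate of Huisken--Sinestrari (or, alternatively, by White's or Haslhofer--Kleiner's approaches), cylindrically pinched mean convex flows in $\mathbb{R}^{n+1}$ are weakly convex in the limit, and the classification of ancient, weakly convex, cylindrically pinched solutions of mean curvature flow identifies the only possibilities as the shrinking round sphere $\mathbb{S}^n$, the shrinking round cylinder $\mathbb{S}^{n-1}\times\mathbb{R}$, and the rotationally symmetric translating bowl soliton (by Brendle--Choi and Angenent--Daskalopoulos--Sesum in the compact and noncompact ancient cases respectively). The principal obstacle is ensuring the blow-up limit is a nontrivial complete ancient solution with the right growth behavior at spatial infinity in the noncompact case, so that the classification applies rather than producing a static plane; this is handled by choosing the blow-up points to achieve curvature comparable to the global maximum (Type I) or to escape to infinity (Type II via Hamilton's trick), which ensures $|\widetilde A|(0,0)=1$ and hence the limit is nondegenerate.
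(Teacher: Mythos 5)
The paper does not contain a proof of this statement at all: Corollary \ref{resultnaff} is imported verbatim from Naff's paper \cite{Naff6} (as the bracketed citation in the corollary header indicates) and is used as a black box in the subsequent discussion. There is therefore no ``paper's own proof'' to compare your sketch against; what you have written is essentially an attempted reconstruction of Naff's argument. That argument does indeed have the same broad shape as your sketch (blow up, establish codimension-one and cylindrical behaviour of the limit, invoke the codimension-one classification), so the outline is directionally reasonable.

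However, as written your sketch has a concrete mismatch of hypotheses. You invoke Theorems \ref{blowuptheorem} and \ref{thm_cylindrical} of \emph{this} paper, but both are proved for solutions in $\mathbb{S}^{n+m}$ under the \emph{quartic} pinching condition \eqref{quarticpinching} and require $n\ge 8$. The corollary is about closed solutions in $\mathbb{R}^N$ under \emph{quadratic} pinching $|A|^2<c_n|H|^2$, and it covers $n=5,6,7$ (with $c_n=\tfrac{3(n+1)}{2n(n+2)}$), a range on which the paper's theorems do not apply. The estimates you actually need are the Euclidean planarity estimate of \cite{Naff} and the Euclidean cylindrical estimate (cf.\ \cite{Nguyen2018}, \cite{LyNgConvexity}), together with the preservation and gradient estimates in the Euclidean quadratically pinched setting; citing this paper's sphere/quartic versions is not a valid substitution and is also circular, since this paper's estimates postdate and build on Naff's. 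In addition, your final step cites Brendle--Choi and Angenent--Daskalopoulos--Sesum for the classification of uniformly two-convex ancient solutions, but both classifications require $\alpha$-noncollapsedness of the limit; you do not explain why the blow-up limit, once shown to be codimension one and cylindrically pinched, is noncollapsed (this is handled in \cite{Naff6} via the mean-convex noncollapsing theory, e.g.\ \cite{An12}, \cite{HK2}, applied after the reduction to codimension one). Both of these points would need to be repaired for the sketch to stand on its own.
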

According to Theorem \ref{blowuptheorem} and Theorem \ref{thm_cylindrical}, for $F: \mathcal{M}^n\times[0, T) \rightarrow \mathbb{S}^{n+m}$, $n\ge8, m\ge2$ a smooth solution to mean curvature flow, so that $F_0(p)=F(p, 0)$ is compact and quartically pinched, with $c_n=\frac{1}{n-2}$, then $\forall \e>0, \exists H_0, H_1 >0$, such that if $f \geq \max\{H_0,H_1\}$, then
	\begin{align*}
	|A^-|^2 \leq \e f+C_{\e} \quad \text{and} \quad  |A|^2- \frac{1}{n-1}|H|^2\leq \e |H|^2+C_{\e},
	\end{align*}
$\forall t \in[0, T)$, where $C_\e=C_{\e}(n,m)$. At the first singular time, the only possible blow-up limits are codimension one shrinking round spheres, shrinking round cylinders, and translating bowl solitons. Therefore, we can classify these blowup limits as follows.
\begin{corollary}[cf.\cite{HS}, cf.\cite{HNAV}, cf.\cite{AVCPn}]
Let $c_n=\frac{1}{n-2}$. Suppose $F_t\colon\mathcal{M}^n \rightarrow \mathbb{S}^{n+m}$, $n\ge8,$ $m\ge 2$ is a smooth solution of the mean curvature flow, compact and quartically pinched with $|H|>0$, on the maximal time interval $[0, T)$.
\begin{enumerate}
\item If the singularity for $t \rightarrow T$ is of type I, the only possible limiting flows under the rescaling procedure as in \cite{HS}, are the homothetically shrinking solutions associated with $\mathbb{S}^n, \mathbb{R} \times \mathbb{S}^{n-1}$.
\item If the singularity is of type II, then from Theorem \ref{blowuptheorem}, the only possible blow-up limits at the first singular time are codimension one shrinking round spheres, shrinking round cylinders, and translating bowl solitons.
\end{enumerate}
\end{corollary}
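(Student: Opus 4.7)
The plan is to use the standard Type I and Type II parabolic rescaling procedures near the first singular time $T$, and to show that the codimension estimate (Theorem \ref{blowuptheorem}) together with the cylindrical estimate (Theorem \ref{thm_cylindrical}) force every subsequential blow-up limit to be a smooth codimension one ancient solution of mean curvature flow in $\mathbb{R}^{n+m}$ satisfying the quadratic pinching $|A|^2\le \frac{1}{n-1}|H|^2$. The existing Euclidean classifications then handle each case separately.

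For part (1), following \cite{HS}, choose $(p_k,t_k)$ with $Q_k := |A|(p_k,t_k) = \max_{\mathcal{M}\times[0,t_k]}|A|$ and $t_k\to T$; Type I means $(T-t_k)Q_k^2\le C$. Parabolically rescale by $Q_k$ exactly as in the proofs of Theorems \ref{blowuptheorem} and \ref{thm_cylindrical}: the rescaled ambient sectional curvature is $Q_k^{-2}\bar K\to 0$, so $(\mathbb{S}^{n+m},Q_k^2\bar g)$ converges to $\mathbb{R}^{n+m}$ in the pointed $C^\infty_{\mathrm{loc}}$ topology. The gradient estimate (Theorem \ref{thm_gradient}) and its higher order analogues give uniform $C^\infty_{\mathrm{loc}}$ bounds on the rescaled second fundamental forms, so by Langer--Breuning compactness a subsequence converges smoothly to a limit flow $\widetilde{\mathcal{M}}^\infty_\tau$ on $(-\infty,0]$. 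Huisken's monotonicity formula, applied on the original flow and passed to the limit, shows $\widetilde{\mathcal{M}}^\infty_\tau$ is homothetically self-shrinking. Passing Theorems \ref{blowuptheorem} and \ref{thm_cylindrical} to the limit gives $\widetilde A^-\equiv 0$ and $|\widetilde A|^2\le \frac{1}{n-1}|\widetilde H|^2$, so the limit is a mean convex self-shrinker lying in a single codimension-one affine subspace of $\mathbb{R}^{n+m}$ with this quadratic pinching. Huisken's classification \cite{Hu84} then identifies it as a round $\mathbb{S}^n$ or a round cylinder $\mathbb{R}\times\mathbb{S}^{n-1}$.

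For part (2), I use Hamilton's Type II point selection: for a sequence $T_k\nearrow T$, pick $(p_k,t_k)$ maximizing $(T_k-t)|A|^2(p,t)$ and parabolically rescale about $(p_k,t_k)$. The choice guarantees that the rescaled flows are defined on time intervals $[-\alpha_k,\omega_k]$ with $\alpha_k,\omega_k\to\infty$, so the same compactness argument yields a smooth eternal mean curvature flow on $(-\infty,\infty)$ in $\mathbb{R}^{n+m}$. Theorems \ref{blowuptheorem} and \ref{thm_cylindrical} again force $\widetilde A^-\equiv 0$ and $|\widetilde A|^2\le \frac{1}{n-1}|\widetilde H|^2$, so the limit is a codimension one $c_n$-quadratically pinched mean convex ancient solution of mean curvature flow in $\mathbb{R}^{n+1}$; Corollary \ref{resultnaff} then classifies it as a shrinking round sphere, a shrinking round cylinder $\mathbb{S}^{n-1}\times\mathbb{R}$, or a translating bowl soliton.

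The main obstacle I expect is verifying that the limit is genuinely smooth and mean convex with $|\widetilde H|>0$, so that the reduction to the Euclidean codimension-one pinched setting is legitimate. Concretely, one has to rule out a degenerate limit in which $|\widetilde H|$ vanishes (here one uses that the preserved pinching $|A|^2\le a(|H|^2)-\varepsilon\omega$ together with Theorem \ref{thm_cylindrical} keeps $|H|$ bounded away from zero at the rescaling points) and to control the injectivity radius of the rescaled immersions, particularly in the Type II case, via a Hamilton-style injectivity radius estimate. Once convergence and mean convexity of the limit are established, Huisken's shrinker classification \cite{Hu84} and Naff's Corollary \ref{resultnaff} close both parts.
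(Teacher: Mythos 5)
Your plan is essentially the same as the (implicit) approach the paper takes: the paper states the corollary after observing that the codimension estimate (Theorem~\ref{blowuptheorem}) and cylindrical estimate (Theorem~\ref{thm_cylindrical}) pass to any blow-up limit, and then cites the Euclidean classifications. Your proposal fills in the standard rescaling machinery in more detail, which is fine.

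Two imprecisions are worth flagging. First, \cite{Hu84} is Huisken's compactness theorem for convex hypersurfaces, not the classification of mean convex self-shrinkers; for the Type~I limit you want Huisken's shrinker classification (or the statement in \cite{HS} directly, which is exactly what the corollary cites). Second, Corollary~\ref{resultnaff} as stated is about blow-up limits of closed flows in $\mathbb{R}^N$; the eternal limit you produce by Hamilton's Type~II point-selection is not literally such a blow-up, so you cannot apply Corollary~\ref{resultnaff} verbatim. What you actually need is the underlying classification of codimension one, uniformly two-convex, $\alpha$-noncollapsed ancient solutions (Brendle--Choi, Angenent--Daskalopoulos--Sesum), which is what Naff invokes in the proof of that corollary; after your $\widetilde A^-\equiv 0$ and cylindrical estimate conclusions, the limit does satisfy those hypotheses, so the conclusion goes through, but the citation should be to the ancient-solution classification (or to Naff's proof) rather than to the statement of Corollary~\ref{resultnaff}. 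The structural content of your argument --- reduce to a codimension one, quadratically pinched, mean convex ancient Euclidean flow and then classify --- matches the paper.
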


\section{Convergence for infinite time}
In this section, we assume $T= \infty$. We primarily follow \cite{AVCPn}. We prove a decay estimate on the traceless part of the second fundamental form, which proves that the submanifold $\mathcal{M}_t$ converges to a totally geodesic limit, as $t\to T$.
\begin{proposition}[Decay estimate, cf.\cite{AVCPn}]\label{theoreminfinitetime}
Let $F: \mathcal{M}^n\times[0, T) \rightarrow \mathbb{S}^{n+m}\left(\frac{1}{\sqrt{\bar{K}}}\right)$ be a smooth solution to the mean curvature flow, so that $F_0(p)=F(p, 0)$ is compact and quartically pinched, $n\ge8$.
Then, there exists a positive constant $\mathcal{C}=\mathcal{C}_\varepsilon(n, \bar{K})$, depending only on the initial manifold $M_0$, such that
	\begin{align*}
	\frac{|\mathring{A}|^2}{f}\le\mathcal{C}e^{-2\left(2\sqrt{n}\bar{K}(1-2\varepsilon n)\right)t},
	\end{align*}
for any $0\le t<T=\infty$.
\end{proposition}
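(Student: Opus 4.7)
The plan is to apply the parabolic maximum principle directly to the pinching quotient $u := |\mathring{A}|^2/f$, where $f = a(|H|^2) - |A|^2 - \varepsilon\omega$ is the positive pinching slack preserved by Proposition \ref{preservation}. Since $u$ is smooth on a compact manifold, it suffices to establish an ODI for $u_{\max}(t) := \max_{\mathcal{M}_t} u(\cdot,t)$ that integrates, by Grönwall, to the stated exponential bound with $\mathcal{C} = u_{\max}(0)$, which depends only on $\mathcal{M}_0$ (and $\varepsilon, n, \bar{K}$).

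First I would compute $(\partial_t - \Delta)u$ via the quotient rule
$$(\partial_t - \Delta)\frac{|\mathring{A}|^2}{f} = \frac{1}{f}(\partial_t - \Delta)|\mathring{A}|^2 - \frac{|\mathring{A}|^2}{f^2}(\partial_t - \Delta)f + \frac{2}{f}\left\langle \nabla u,\nabla f\right\rangle,$$
substituting \eqref{eqn_|A|^2traceless} for the numerator and the expression for $(\partial_t-\Delta)f$ derived at the beginning of Section 5.1 for the second term. The structural point is that both evolution equations carry $\bar{K}$-linear terms with favorable sign: $-2n\bar{K}|\mathring{A}|^2$ appears in $(\partial_t-\Delta)|\mathring{A}|^2$, while $(\partial_t-\Delta)f$ carries $+2n\bar{K}|A|^2$ together with $\mathring{a}'$-weighted $\bar{K}|H|^2$ terms. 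Via the factor $-u/f$, the latter contributes a further negative multiple of $u$.

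At a spatial maximum of $u$ the explicit gradient term vanishes. To handle the Laplacian contributions, I would use Kato's inequality (Lemma \ref{katoinequality}) together with the Bochner-type bounds \eqref{4.21naff}--\eqref{4.22naff} and Lemma \ref{inequalities}(ii) to absorb $-2|\nabla\mathring{A}|^2/f$ against the $-2\mathring{a}'|\nabla H|^2 - \mathring{a}''|\nabla|H|^2|^2$ contributions coming through $-(|\mathring{A}|^2/f^2)(\partial_t-\Delta)f$, leaving only algebraic reaction terms. For the quartic reaction $R_1 - (1/n)R_2$, I would use the pinching inequality $|A|^2 \le a(|H|^2) - \varepsilon\omega$, the sharp bound $a(x) < x/(n-2) + 2\sqrt{n}\bar{K}$ from \eqref{ineqa}, the explicit $\omega = |H|^2/(n-2) + 4n\sqrt{n}\bar{K}$, and Lemma \ref{inequalities}(iii)--(iv) to extract exactly the sharp constant $2\sqrt{n}\bar{K}$; the $(1-2\varepsilon n)$ correction then appears from substituting $-\varepsilon\omega$ into the resulting linear-in-$\bar{K}$ combination.

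The end result at the maximum will be an ODI of the form
$$\frac{d}{dt}u_{\max}(t) \le -4\sqrt{n}\bar{K}(1-2\varepsilon n)\, u_{\max}(t),$$
from which Grönwall's inequality yields the conclusion. The main obstacle is the sharp identification of the decay coefficient in the third step. The terms $R_1, R_2$ admit many upper bounds (for instance via Lemmas \ref{4.1}--\ref{lemma4.3}), but only a careful choice using the explicit pinching function $a$, together with the inequalities \eqref{ineqa}--\eqref{a''(x)} and Lemma \ref{inequalities}(iii)--(iv), produces the effective coefficient $2\sqrt{n}$ rather than the naive $n$ or $n-2$ obtained from coarser estimates; this is the same delicate algebraic balancing that underlies the proof of Proposition \ref{preservation}. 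A secondary difficulty is the uniform control of the cross-terms involving $\mathring{a}', \mathring{a}''$, which is guaranteed by \eqref{a'(x)}--\eqref{a''(x)} and the pointwise gradient bound of Theorem \ref{thm_gradient}.
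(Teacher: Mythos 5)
Your proposal is correct and follows essentially the same approach as the paper: the paper simply rephrases the quotient $|\mathring{A}|^2/f$ as $q/Q$ with $q=\tfrac12|\mathring{A}|^2$, $Q=-\tfrac12 f$, derives evolution inequalities for $q$ and $Q$ whose principal algebraic terms cancel in the quotient, discards the gradient terms via Kato's inequality, and applies the (strong) maximum principle to obtain the exponential decay with rate $2\cdot 2\sqrt{n}\bar{K}(1-2\varepsilon n)$. The only minor discrepancies are that the sharp coefficient in the paper comes from the relations \eqref{form1}--\eqref{form2} (derived from \eqref{ineqa} and \eqref{omega}) rather than from Lemma \ref{inequalities}(iii)--(iv), and the paper separately treats the degenerate case $H=0$, where the principal-normal decomposition is not defined.
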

\begin{proof}
Consider the following functions
	\begin{align*}
Q :=-\frac{f}{2} \ \ \text{and} \ \ q:=\frac{1}{2}\left(|A|^2-\frac{1}{n}|H|^2\right),
	\end{align*}
where recall $f:=-|A|^2+a-\varepsilon\omega$, where $a,\omega$ come from \eqref{a(x)} and \eqref{omega}. In the case where $H\neq 0$, from \eqref{eqn_|A|^2}, \eqref{eqn_|H|^2} and as we did in Proposition \ref{preservation}, the evolution equation of $Q$ becomes
	\begin{align*}
%	\left(\partial_t-\Delta\right) Q&=-\left(|\nabla A|^2-\left(\mathring{a}^\prime-\frac{\varepsilon}{n-2}+2|H|^2\mathring{a}^{\prime\prime}\right)|\nabla H|^2\right) + R_1-\frac{1}{n} R_2- n \bar{K} |A|^2\\
%&-\left(\mathring{a}^{\prime}-\frac{\varepsilon}{n-2}\right) \left(R_2+n \bar{K}|H|^2\right)\\
 \left(\partial_t-\Delta\right) Q&\le -\left(|\nabla A|^2-\frac{2(n-1)}{n(n+2)}|\nabla H|^2\right)-n\bar{K}|A|^2\\
&-\left(\left(\frac{1}{n}+\mathring{a}^\prime\right)\sum_{i,j}|\langle A_{ij},H\rangle|^2-\sum_{i,j,p,q}|\langle A_{ij},A_{pq}\rangle|^2-\sum_{i,j}|R_{ij}^\bot |^2\right).
	\end{align*}
At a point where $H \neq 0$, decomposing $A$ into its irreducible components according to \cite{AnBa10}, \cite{LaLyNg}, \cite{Li1992} and \cite{Naff}, we have
	\begin{align*}
& |A|^2=|\mathring{h}|^2+\frac{1}{n} |H|^2+|A^-|^2, \\
&\sum_{i,j}|\langle A_{ij},H\rangle|^2=|\mathring{h}|^2 |H|^2+\frac{1}{n} |H|^4, \\
&\sum_{i,j,p,q} |\langle A_{ij}, A_{pq}\rangle |^2+\sum_{i,j}|R^{\bot} _{ij}|^2\le 3|\mathring{h}|^2|A^-|^2+\frac{3}{2}|A^-|^4+\left(|\mathring{h}|^2+\frac{1}{n}|H|^2\right)|A|^2-\frac{1}{n}|A^-|^2|H|^2
	\end{align*}
and so
	\begin{align*}
	\left(\partial_t-\Delta\right)Q&\le 3|\mathring{h}|^2|A^-|^2+\frac{3}{2}|A^-|^4+\left(|\mathring{h}|^2+\frac{1}{n}|H|^2\right)|A|^2-\frac{1}{n}|A^-|^2|H|^2-n\bar{K}|A|^2\\
&-\left(\frac{1}{n}+\mathring{a}^\prime\right)\left(|\mathring{h}|^2+\frac{1}{n}|H|^2\right) |H|^2-\left(|\nabla A|^2-\frac{2(n-1)}{n(n+2)}|\nabla H|^2\right).
	\end{align*}
Also, from \eqref{ineqa}, \eqref{omega}, \eqref{form11} and \eqref{form22}, we have
	\begin{align}
\label{form1}	&|A|^2=2Q+a-\varepsilon\omega<2Q+\frac{1-\varepsilon}{n-2}|H|^2+2\sqrt{n}\bar{K}(1-2\varepsilon n),\\
\label{form2}	&|A|^2=2Q+a-\varepsilon\omega>2Q+\frac{1-\varepsilon}{n-2}|H|^2+4\bar{K}(1-\varepsilon n\sqrt{n}).
	\end{align}
Moreover, from \eqref{katoinequality}, we have that the term $|\nabla A|^2-\frac{2(n-1)}{n(n+2)}|\nabla H|^2$ is negative. Therefore,
	\begin{align*}
&\left(\partial_t-\Delta\right)Q\le3|\mathring{h}|^2|A^-|^2+\frac{3}{2}|A^-|^4-\frac{1}{n}|A^-|^2|H|^2+\left(|\mathring{h}|^2+\frac{1}{n}|H|^2\right)(2Q+a-\varepsilon\omega)\\
&-n\bar{K}(|\mathring{h}|^2+|A^-|^2)-\bar{K}|H|^2-\left(\frac{1}{n}+\mathring{a}^\prime\right)\left(|\mathring{h}|^2+\frac{1}{n}|H|^2\right)|H|^2\\
&<3|\mathring{h}|^2|A^-|^2+\frac{3}{2}|A^-|^4-\frac{1}{n}|A^-|^2|H|^2+\left(|\mathring{h}|^2+\frac{1}{n}|H|^2\right)(2Q+2\sqrt{n}\bar{K}(1-2\varepsilon n))\\
&-n\bar{K}(|\mathring{h}|^2+|A^-|^2)+\left(\frac{2-n\varepsilon}{n(n-2)}-\mathring{a}^\prime\right)|h|^2|H|^2\\
&=\left(3|\mathring{h}|^2+\frac{3}{2}|A^-|^2-\frac{1}{n}|H|^2-2\sqrt{n}\bar{K}(1-2\varepsilon n)\right)|A^-|^2-n\bar{K}(|\mathring{h}|^2+|A^-|^2)\\
&+2\sqrt{n}\bar{K}(1-2\varepsilon n)\left(|\mathring{h}|^2+|A^-|^2+\frac{1}{n}|H|^2\right)+2Q\left(|\mathring{h}|^2+\frac{1}{n}|H|^2\right)+\left(\frac{2-n\varepsilon}{n(n-2)}-\mathring{a}^\prime\right)|h|^2|H|^2.
	\end{align*}
Substituting \eqref{form1}, we obtain
	\begin{align*}
2\sqrt{n}\bar{K}(1-2\varepsilon n)&\left(|\mathring{h}|^2+|A^-|^2+\frac{1}{n}|H|^2\right)\\
&<2\sqrt{n}\bar{K}(1-2\varepsilon n)\left(2Q+\frac{1-\varepsilon}{n-2}|H|^2+2\sqrt{n}\bar{K}(1-2\varepsilon n)\right)
	\end{align*}
and substituting \eqref{form2}, we have
	\begin{align*}
-n\bar{K}|\mathring{A}|^2&=-n\bar{K}(|\mathring{h}|^2+|A^-|^2)<-n\bar{K}\left(2Q+\left(\frac{1-\varepsilon}{n-2}-\frac{1}{n}\right)|H|^2+4\bar{K}(1-\varepsilon n\sqrt{n})\right)
	\end{align*}
and hence
	\begin{align*}
&2\sqrt{n}\bar{K}(1-2\varepsilon n)\left(|\mathring{h}|^2+|A^-|^2+\frac{1}{n}|H|^2\right)-n\bar{K}|\mathring{A}|^2\\
& <2Q\left(\frac{2}{\sqrt{n}}(1-2\varepsilon n)-1\right)n\bar{K}+|H|^2\left(\frac{2}{\sqrt{n}}(1-2\varepsilon n)\left(\frac{1-\varepsilon}{n-2}\right)-\left(\frac{1-\varepsilon}{n-2}-\frac{1}{n}\right)\right)n\bar{K}\\
&+n^2\bar{K}^2\left(\frac{4}{n}(1-2\varepsilon n)^2-\frac{4}{n}(1-\varepsilon n\sqrt{n}) \right)\\
& <2Q\left(\frac{2}{\sqrt{n}}(1-2\varepsilon n)\right)n\bar{K}+|H|^2\left(\left(\frac{2-4\varepsilon n-\sqrt{n}}{\sqrt{n}}\right)\left(\frac{1-\varepsilon}{n-2}\right)+\frac{1}{n}\right)n\bar{K}\\
&+n^2\bar{K}^2\left(\frac{4}{n}(1-2\varepsilon n)^2-\frac{4}{n}(1-\varepsilon n\sqrt{n}) \right).
	\end{align*}
Also, from \eqref{form1}, writing
	\begin{align*}
\frac{1}{n}|H|^2&>\frac{n-2}{2-\varepsilon n}\left(|\mathring{h}|^2+|A^-|^2-2Q-2\sqrt{n}\bar{K}(1-2\varepsilon n)\right),
	\end{align*}
we find
	\begin{align*}
&3|\mathring{h}|^2+\frac{3}{2}|A^-|^2-\frac{1}{n}|H|^2-2\sqrt{n}\bar{K}(1-2\varepsilon n)\\
&<3|\mathring{h}|^2+\frac{3}{2}|A^-|^2-\frac{n-2}{2-\varepsilon n}\left(|\mathring{h}|^2+|A^-|^2-2Q-2\sqrt{n}\bar{K}(1-2\varepsilon n)\right)-2\sqrt{n}\bar{K}(1-2\varepsilon n)\\
&=\left(3-\frac{n-2}{2-\varepsilon n}\right)|\mathring{h}|^2+\left(\frac{3}{2}-\frac{n-2}{2-\varepsilon n}\right)|A^-|^2+2Q\frac{n-2}{2-\varepsilon n}-\left(1-\frac{n-2}{2-\varepsilon n}\right)2\sqrt{n}\bar{K}(1-2\varepsilon n).
	\end{align*}
For $0<\varepsilon<1$, the first term on the right hand side is non positive. Indeed,
	\begin{align}\label{negativeterm}
	&3-\frac{n-2}{2-\varepsilon n}\le0\Longleftrightarrow n\ge\frac{8}{1+3\varepsilon},
	\end{align}
where for $0<\varepsilon<1$, we have $4<\frac{8}{1+3\varepsilon}<8$. Therefore, \eqref{negativeterm} holds for $n\ge5$. %	\begin{align*}%\label{negativeterm}
%	&3-\frac{n-2}{2-\varepsilon n}\le0\Longleftrightarrow n\ge\frac{8}{1+3\varepsilon}\ge\frac{8n^{5/2}}{n^{5/2}+3}\Longleftrightarrow n\ge0.
%	&0<\varepsilon\ll\frac{1}{n^{\frac{5}{2}}}\Longleftrightarrow 2-\frac{1}{n^{\frac{3}{2}}}\ll2-n\varepsilon<2\nonumber\\
%	&\Longleftrightarrow \frac{n-2}{2}<\frac{n-2}{2-\varepsilon n}\ll\frac{n-2}{2-\frac{1}{n^{\frac{3}{2}}}}\nonumber\\
%	&\Longleftrightarrow 3-\frac{n-2}{2-\frac{1}{n^{\frac{3}{2}}}}\ll3-\frac{n-2}{2-\varepsilon n}<\frac{8-n}{2}.
%	\end{align*}
Disregarding this term and putting things back together we conclude that
	\begin{align*}
\left(\partial_t-\Delta\right)Q&< \left(\frac{3}{2}-\frac{n-2}{2-\varepsilon n}\right)|A^-|^4-\left(1-\frac{n-2}{2-\varepsilon n}\right)2\sqrt{n}\bar{K}(1-2\varepsilon n)|A^-|^2\\
&+n^2\bar{K}^2\left(\frac{4}{n}(1-2\varepsilon n)^2-\frac{4}{n}(1-\varepsilon n\sqrt{n}) \right)\\
&+2Q\left(|\mathring{h}|^2+\frac{n-2}{2-\varepsilon n}|A^-|^2+\frac{1}{n}|H|^2+\left(\frac{2}{\sqrt{n}}(1-2\varepsilon n)\right)n\bar{K}\right)\\
&+|H|^2\left(\left(\frac{2-4\varepsilon n-\sqrt{n}}{\sqrt{n}}\right)\left(\frac{1-\varepsilon}{n-2}\right)+\frac{1}{n}\right)n\bar{K}+\left(\frac{2-\varepsilon n}{n(n-2)}-\mathring{a}^\prime\right)|h|^2|H|^2.
	\end{align*}
The term $|H|^2\left(\left(\frac{2-4\varepsilon n-\sqrt{n}}{\sqrt{n}}\right)\left(\frac{1-\varepsilon}{n-2}\right)+\frac{1}{n}\right)n\bar{K}$ is negative. Indeed,
	\begin{align*}
\left(\frac{2-4\varepsilon n-\sqrt{n}}{\sqrt{n}}\right)\left(\frac{1-\varepsilon}{n-2}\right)+\frac{1}{n}&=\frac{-2(1-\varepsilon)\sqrt{n}(2n\varepsilon-1)-2+n\varepsilon}{n(n-2)}<0,
	\end{align*}
since $0<\varepsilon\ll \frac{1}{n^{5 / 2}}$. Moreover, from \eqref{a'(x)}, we note that $\mathring{a}^\prime=\frac{\varepsilon_0}{n-2}-\frac{1}{n}$, where 
	\begin{align*}
	\varepsilon_0=\frac{\frac{|H|^2}{n-2}+4\bar{K}}{\sqrt{\left(\frac{|H|^2}{n-2}+4 \bar{K}\right)^2+(4 n-16) \bar{K}^2}}<1.
	\end{align*}
Denoting $\varepsilon_0:=1-\varepsilon$, we see that $\mathring{a}^\prime=\frac{2-\varepsilon n}{n(n-2)}$ and, hence, the term $\left(\frac{2-\varepsilon n}{n(n-2)}-\mathring{a}^\prime\right)|h|^2|H|^2$ is zero. Also, the discriminant of the polynomial
	\begin{align*}
&\left(\frac{3}{2}-\frac{n-2}{2-\varepsilon n}\right)|A^-|^4-\left(1-\frac{n-2}{2-\varepsilon n}\right)2\sqrt{n}\bar{K}(1-2\varepsilon n)|A^-|^2\\
&+n^2\bar{K}^2\left(\frac{4}{n}(1-2\varepsilon n)^2-\frac{4}{n}(1-\varepsilon n\sqrt{n}) \right)
	\end{align*}
is negative. Indeed,
	\begin{align*}
	D&=\left(1-\frac{n-2}{2-\varepsilon n}\right)^2\frac{4}{n}(1-2\varepsilon n)^2-4\left(\frac{3}{2}-\frac{n-2}{2-\varepsilon n}\right)\left(\frac{4}{n}(1-2\varepsilon n)^2-\frac{4}{n}(1-\varepsilon n\sqrt{n}) \right)\\
&=\left(-5+\frac{2(n-2)}{2-\varepsilon n}+\left(\frac{n-2}{2-\varepsilon n}\right)^2\right)(1-2\varepsilon n)^2-\left(6-\frac{4(n-2)}{2-\varepsilon n}\right)\left((1-2\varepsilon n)^2-(1-\varepsilon n\sqrt{n}) \right)\\
&=\left(-5+\frac{2(n-2)}{2-\varepsilon n}+\left(\frac{n-2}{2-\varepsilon n}\right)^2\right)(1-2\varepsilon n)^2+\left(6-\frac{4(n-2)}{2-\varepsilon n}\right)(1-\varepsilon n\sqrt{n})\\
%&<-6(2\varepsilon n-1)^2-10(\varepsilon n\sqrt{n}-1)\\
&<0,	
	\end{align*}
for any $\varepsilon<1$ and any $n\ge0$, so we can disregard these terms. Therefore, whenever $H\neq 0$,  the evolution equation of $Q$, becomes
	\begin{align*}%\label{forcylest}
\left(\partial_t-\Delta\right)Q&<2Q\left(|\mathring{h}|^2+\frac{n-2}{2-\varepsilon n}|A^-|^2+\frac{1}{n}|H|^2+\left(\frac{2}{\sqrt{n}}(1-2\varepsilon n)\right)n\bar{K}\right).
	\end{align*}
For the evolution equation of $q$, we have
	\begin{align*}
\left(\partial_t-\Delta\right)q&<\left(3|\mathring{h}|^2+\frac{3}{2}|A^-|^2-\frac{1}{n}|H|^2\right)|A^-|^2-n\bar{K}(|\mathring{h}|^2+|A^-|^2)+2q\left(|\mathring{h}|^2+\frac{1}{n}|H|^2\right).
	\end{align*}
Since $|\mathring{h}|^2+|A^-|^2=|\mathring{A}|^2=2q$, we have
	\begin{align*}
3|\mathring{h}|^2+\frac{3}{2}|A^-|^2-\frac{1}{n}|H|^2&=\left(3-\frac{n-2}{2-\varepsilon n}\right)|\mathring{h}|^2+\left(\frac{3}{2}-\frac{n-2}{2-\varepsilon n}\right)|A^-|^2-\frac{1}{n}|H|^2+\frac{n-2}{2-\varepsilon n}2q.
	\end{align*}
The first three terms of the above equality are non-positive for any $n\ge 8$ and $0<\varepsilon\ll\frac{1}{n^{5/2}}$, from \eqref{negativeterm}. Moreover,
	\begin{align*}
	-n\bar{K}(|\mathring{h}|^2+|A^-|^2)=-n\bar{K}|\mathring{A}|^2=-n\bar{K}2q.
	\end{align*}
Also, using \eqref{katoinequality}, we arrive at
	\begin{align*}
\left(\partial_t-\Delta\right)q&\le 2q\left(|\mathring{h}|^2+\frac{n-2}{2-\varepsilon n}|A^-|^2+\frac{1}{n}|H|^2-n\bar{K}\right).
	\end{align*}
Finally, for the evolution equation of $\frac{\left(\partial_t-\Delta\right)\frac{q}{Q}}{\frac{q}{Q}}$, we have the following computation
	\begin{align*}
\frac{\left(\partial_t-\Delta\right) \frac{q}{Q}}{\frac{q}{Q}} & =\frac{\left(\partial_t-\Delta\right) q}{q}-\frac{\left(\partial_t-\Delta\right) Q}{Q}+2\left\langle\nabla \log \frac{q}{Q}, \nabla \log Q\right\rangle \\
&<2\left(|\mathring{h}|^2+\frac{n-2}{2-\varepsilon n}|A^-|^2+\frac{1}{n}|H|^2-n\bar{K}\right)+2\left\langle\nabla \log \frac{q}{Q}, \nabla \log Q\right\rangle\\
&-2\left(|\mathring{h}|^2+\frac{n-2}{2-\varepsilon n}|A^-|^2+\frac{1}{n}|H|^2+\left(\frac{2}{\sqrt{n}}(1-2\varepsilon n)\right)n\bar{K}\right)\\
&<-2\left(2\sqrt{n}\bar{K}(1-2\varepsilon n)\right)+2\left\langle\nabla \log \frac{q}{Q}, \nabla \log Q\right\rangle,
	\end{align*}
where $2\sqrt{n}\bar{K}(1-2\varepsilon n)>0$, for $0<\varepsilon\ll \frac{1}{n^{5 / 2}}$. In the case of $H=0$, in the same way, we have
 	\begin{align*}
&\left(\partial_t-\Delta\right)Q\le2Q\left(|\mathring{h}|^2+\frac{n-2}{2-\varepsilon n}|A^-|^2+\left(\frac{2}{\sqrt{n}}(1-2\varepsilon n)\right)n\bar{K}\right),\\
&\left(\partial_t-\Delta\right)q\le 2q\left(|\mathring{h}|^2+\frac{n-2}{2-\varepsilon n}|A^-|^2-n\bar{K}\right).
	\end{align*}
Therefore, for the evolution equation of $\frac{\left(\partial_t-\Delta\right)\frac{q}{Q}}{\frac{q}{Q}}$, in the case where $H=0$, we have the following computation
	\begin{align*}
\frac{\left(\partial_t-\Delta\right) \frac{q}{Q}}{\frac{q}{Q}} 
%& =\frac{\left(\partial_t-\Delta\right) q}{q}-\frac{\left(\partial_t-\Delta\right) Q}{Q}+2\left\langle\nabla \log \frac{q}{Q}, \nabla \log Q\right\rangle \\
%&\le2\left(|\mathring{h}|^2+\frac{n-2}{2-\varepsilon n}|A^-|^2-n\bar{K}\right)\\
%&-2\left(|\mathring{h}|^2+\frac{n-2}{2-\varepsilon n}|A^-|^2+\left(\frac{2}{\sqrt{n}}(1-2\varepsilon n)\right)n\bar{K}\right)+2\left\langle\nabla \log \frac{q}{Q}, \nabla \log Q\right\rangle\\
\le-2\left(2\sqrt{n}\bar{K}(1-2\varepsilon n)\right)+2\left\langle\nabla \log \frac{q}{Q}, \nabla \log Q\right\rangle,
	\end{align*}
which is the same evolution equation as in the case $H\neq0$. Hence, by the strong maximum principle there exists this constant $\mathcal{C}$ depending upon $\varepsilon,n$ and $\bar{K}$, such that
	\begin{align*}
	\frac{q}{Q}\le\mathcal{C}e^{-2\left(2\sqrt{n}\bar{K}(1-2\varepsilon n)\right)t},
	\end{align*}
which completes the proof.
\end{proof}
Proposition \ref{theoreminfinitetime} implies, that there exists $\tau=\tau(n, \varepsilon)$, such that the inequality $|A|^2-a(|H|^2)+\varepsilon\left( \frac{|H|^2}{n-2}+4 n \sqrt{n} \bar{K}\right)<0$ holds for every $t \in(0, T) \cap\left[\tau \left(2\sqrt{n}\bar{K}(1-2\varepsilon n)\right)^{-1}, +\infty\right)$ on any solution initially satisfying \eqref{pinching}. 
%Therefore, for all $\tilde{k}\ge1$, we have 
%	\begin{align*}
%|A|^2-\sqrt{\left(\frac{|H|^2}{n-2}+4 \bar{K}\right)^2+(4n-16) \bar{K}^2}-\varepsilon\left( \frac{|H|^2}{n-2}+4 n \sqrt{n}\bar{K}\right) <0.
%	\end{align*}
If $T>\tau \left(2\sqrt{n}\bar{K}(1-2\varepsilon n)\right)^{-1}$, this means that at that time the solution satisfies the hypotheses of Theorem 1.1 in \cite{DongPu}. Consequently, the solution either exists forever and converges to a totally geodesic submanifold as $t \rightarrow \infty$, or else contracts to codimension one solution in finite time, from Theorem \ref{blowuptheorem}.

\end{document}